\appto{\bibsetup}{\sloppy}
\newcommand{\Oh}{\mathcal{O}}
\newcommand{\D}{\mathrm{d}}
\newcommand{\mS}{\mathcal{S}}
\newcommand{\mL}{\mathcal{L}}
\newcommand{\e}{\mathds{E}}
\newcommand{\R}{\mathbb{R}}
\newcommand{\Complex}{\mathbb{C}}
\newcommand{\N}{\mathbb{N}}
\newcommand{\p}{\mathds{P}}
\newcommand{\U}{\mathrm{U}}
\newcommand{\1}{\mathds{1}}
\newcommand{\wt}[1]{\widetilde{#1}}
\newcommand{\wh}[1]{\widehat{#1}}
\newcommand{\Loc}{L^1_{\mathrm{loc}}}
\newcolumntype{L}{>{\centering\arraybackslash}m{7cm}}
\newcolumntype{T}{>{\centering\arraybackslash}m{4cm}}
\newcolumntype{S}{>{\centering\arraybackslash}m{3.2cm}}
\newcolumntype{M}{>{\centering\arraybackslash}m{3.5cm}}
\newcommand{\s}{\mathfrak{s}}
\newcommand{\ve}{\varepsilon}
\newcommand{\vs}{\varsigma}
\newcommand{\da}{\downarrow}
\newcommand{\ua}{\uparrow}
\newcommand{\Leb}{\text{\normalfont Leb}}
\newcommand{\ov}[1]{\overline{#1}}
\newcommand{\eqd}{\overset{d}{=}}
\newcommand{\nf}[1]{\normalfont{#1}}
\providecommand{\algorithmname}{Algorithm}
\providecommand{\assumptionname}{Assumption}
\providecommand{\examplename}{Example}
\providecommand{\lemmaname}{Lemma}
\providecommand{\propositionname}{Proposition}
\providecommand{\remarkname}{Remark}
\providecommand{\corollaryname}{Corollary}
\providecommand{\theoremname}{Theorem}
\providecommand{\conjecturename}{Conjecture}
\numberwithin{equation}{section}
\numberwithin{figure}{section}
\theoremstyle{plain}
\theoremstyle{plain}
\newtheorem{thm}{\protect\theoremname}[section]
\theoremstyle{remark}
\newtheorem*{rem*}{\protect\remarkname}
\theoremstyle{remark}
\newenvironment{rem}
    {\pushQED{\qed}\remx}
    {\popQED\endremx}
\theoremstyle{plain}
\newtheorem*{assumption*}{\protect\assumptionname}
\theoremstyle{plain}
\newtheorem{lem}[thm]{\protect\lemmaname}
\theoremstyle{plain}
\newtheorem{cor}[thm]{\protect\corollaryname}
\theoremstyle{plain}
\newtheorem{prop}[thm]{\protect\propositionname}
\theoremstyle{plain}
\newtheorem{conj}[thm]{\protect\conjecturename}
\theoremstyle{definition}
\newtheorem*{ex*}{\protect\examplename}
\newenvironment{ex}
    {\pushQED{\qed}\exx}
    {\popQED\endexx}
   \def\MR#1{}
\begin{document}

\title{When is the convex hull of a L\'evy path smooth?}

\author{David Bang, Jorge Gonz\'{a}lez C\'{a}zares \and Aleksandar Mijatovi\'{c}}

\address{Department of Statistics, University of Warwick, \& The Alan Turing Institute, UK}

\email{david.bang@warwick.ac.uk}

\email{jorge.i.gonzalez-cazares@warwick.ac.uk}

\email{a.mijatovic@warwick.ac.uk}

\begin{abstract}
We characterise, in terms of their transition laws, the class of one-dimensional L\'evy processes whose graph has a continuously differentiable (planar) convex hull. We show that this phenomenon is exhibited by a broad class of infinite variation L\'evy processes and depends subtly on the behaviour of the L\'evy measure at zero. We introduce a class of strongly eroded L\'evy processes, whose Dini derivatives vanish at every local minimum of the trajectory for all perturbations with a linear drift, and prove that these are precisely the processes with smooth convex hulls. We study how the smoothness of the convex hull can break and construct examples exhibiting a variety of smooth/non-smooth behaviours. Finally, we conjecture that an infinite variation L\'evy process is either strongly eroded or abrupt, a claim implied by Vigon's point-hitting conjecture. In the finite variation case, we characterise the points of smoothness of the hull in terms of the L\'evy measure.
\end{abstract}

\keywords{Convex hull, L\'evy process, smoothness of convex minorant}

\subjclass[2020]{60G51}

\maketitle

\section{Introduction and main results}
\label{sec:introduction}

Convex hulls of stochastic processes, including Brownian motion and L\'evy processes, have been the focus of many studies for decades, see e.g.~\cite{MR2831081,MR1747095,MR205343,MR714964,MR972777,MR2978134} and the references therein. The boundary of the convex hull of the range of a planar Brownian motion consists of piecewise linear segments but is well-known to be smooth (i.e. continuously differentiable) everywhere~\cite{MR972777}. The convex hull of a graph of a path of a standard Cauchy process  also possesses a smooth 
boundary almost surely~\cite{MR1747095}, a fact not easily discerned from the simulation in Figure~\ref{fig:convex_hull} below (but \textit{cf.} discussion following Theorem~\ref{thm:CM_global_smoothness} below). Since the law of the graph of the standard Cauchy process scales linearly in time, it is natural to ask whether smoothness of the hull occurs at all for L\'evy process without a linear scaling property (note that the range of a planar Brownian motion also possesses a temporal scaling property). In this paper we characterise (in terms of transition laws) what turns out to be a rich and interesting class of L\'evy processes whose graphs have smooth convex hulls almost surely. We study its properties by analysing how the smoothness of the hull of a graph may fail for a general L\'evy process (see \href{https://www.youtube.com/watch?v=qPxBqaq2AsQ&list=PLPpwtaET-J4mauZQ6dlcp0i6lp4e3ycJZ}{YouTube}~\cite{Presentation_AM} for a short presentation of our results).

\begin{figure}[ht]
\centering
\includegraphics[width=.4\textwidth]{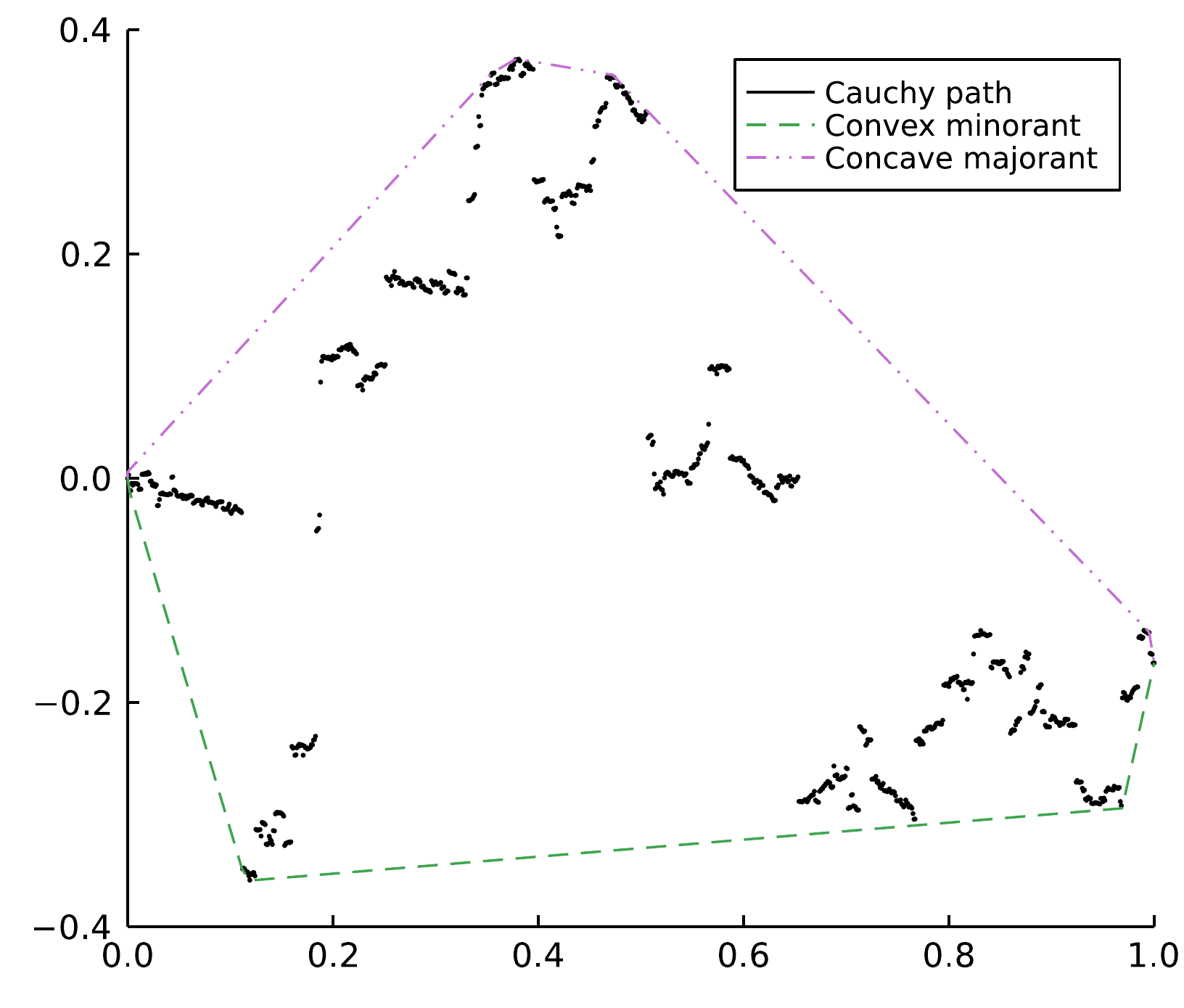}
\caption{\small The path of a Cauchy process and its convex hull.}
\label{fig:convex_hull}
\end{figure}

The boundary of the convex hull of the graph of a L\'evy process $X$ (see~\cite{MR3185174} for background on L\'evy processes) over a finite interval $[0,T]$ is a union of the graphs of the convex minorant and the concave majorant, i.e. the {\color{Green} largest convex} and {\color{Purple} smallest concave} functions dominating the path pointwise from below and above, respectively (see Figure~\ref{fig:convex_hull}).
The minorant and majorant are piecewise linear functions for any L\'evy process $X$. The set of slopes $\mS\subset \R$ of the convex minorant, which has the same law as the set of slopes of the concave majorant (see e.g.~\cite[Thm~11]{CM_Fluctuation_Levy}),
plays a key role in the question of smoothness of the boundary of the hull. 
Unless otherwise stated, $X$ is not compound Poisson with drift (in this case $\mS$ is clearly finite and the smoothness cannot occur), making $X_t$ diffuse for all $t>0$~\cite[Lem.~15.22]{MR1876169} and thus the cardinality of the set of slopes $\mS$ infinite~\cite[Thm~11]{CM_Fluctuation_Levy}.
The following zero-one law characterises the local finiteness of $\mS$ in terms of the increments of $X$. The characterisation holds for all L\'evy processes $X$ with diffuse increments. 

\begin{thm}
\label{thm:CM_local_smoothness}
For any measurable set $I\subseteq\R$, the set $\mS\cap I$ is either a.s. finite or a.s. infinite. Moreover, the cardinality $|\mS\cap I|$ of the intersection $\mS\cap I$ is infinite almost surely if and only if 
\begin{equation}
\label{eq:integral_slope}
\int_0^1 \p(X_t/t\in I)\frac{\D t}{t}=\infty.
\end{equation}
\end{thm}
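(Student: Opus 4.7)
The plan is to combine the stick-breaking representation of the convex minorant with two applications of the Borel--Cantelli lemma and a reduction to a Poisson-integral computation. I would first invoke \cite[Thm~11]{CM_Fluctuation_Levy} to obtain the distributional identity $\mS \eqd \{X^{(n)}_{L_n}/L_n : n \geq 1\}$, where $L_n := U_n R_{n-1}$ with $R_n := T \prod_{i=1}^n (1-U_i)$ and $R_0 := T$, the $(U_i)$ are i.i.d.\ Uniform$(0,1)$, and the $X^{(n)}$ are i.i.d.\ copies of $X$ independent of $(U_i)$. Diffuseness of $X_t$ for $t > 0$ makes the values $S_n := X^{(n)}_{L_n}/L_n$ almost surely pairwise distinct, so $|\mS \cap I| = \#\{n : S_n \in I\}$. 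Setting $f(t) := \p(X_t/t \in I)$, the events $\{S_n \in I\}$ are, conditionally on $\sigma((L_n))$, independent Bernoullis of parameter $f(L_n)$; the conditional Borel--Cantelli lemma then gives $\p(|\mS \cap I| = \infty) = \p(\sum_n f(L_n) = \infty)$.

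The key step is to convert this stick-breaking sum into an honest Poisson integral. Viewing $f(L_n) \in [0,1]$ as a sequence adapted to $\F_n := \sigma(U_1,\ldots,U_n)$, a direct computation yields $\e[f(L_n) \mid \F_{n-1}] = \int_0^1 f(u R_{n-1})\,\D u = F(R_{n-1})$, where $F(r) := r^{-1}\int_0^r f(s)\,\D s \in [0,1]$. The generalised conditional Borel--Cantelli lemma for $[0,1]$-valued adapted sequences (Dubins--Freedman) then yields $\{\sum_n f(L_n) = \infty\} = \{\sum_n F(R_{n-1}) = \infty\}$ almost surely. Now the residual process $(R_n)_{n\geq 1}$ has a clean Poisson description: the variables $\tau_n := -\log(R_n/T) = -\sum_{i \leq n}\log(1-U_i)$ are the arrival times of a rate-$1$ Poisson process on $(0,\infty)$, so $(R_n)$ is a PPP on $(0,T]$ with intensity $\D s/s$. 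Boundedness of $F$ and the Poisson integrability criterion (via the Laplace functional) give $\{\sum_{n \geq 1} F(R_n) = \infty\} = \{\int_0^T F(s)/s\,\D s = \infty\}$ almost surely.

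Fubini identifies $\int_0^T F(s)/s\,\D s = \int_0^T f(s)/s\,\D s - T^{-1}\int_0^T f(s)\,\D s$; since $f \leq 1$ makes the subtracted term bounded by $1$ and $\int_1^T f(s)/s\,\D s \leq \log T$, divergence of $\int_0^T f(s)/s\,\D s$ is equivalent to divergence of $\int_0^1 f(s)/s\,\D s$. Chaining the equivalences in the previous two paragraphs, the event $\{|\mS \cap I| = \infty\}$ agrees, up to null sets, with the deterministic event $\{\int_0^1 \p(X_t/t \in I)\,\D t/t = \infty\}$, which simultaneously establishes the 0--1 dichotomy and the integral characterisation in the theorem.

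The main technical hurdle I anticipate is the reduction in paragraph two: the stick-breaking sequence $(L_n)$ is itself \emph{not} a Poisson process (its one-dimensional law on $(0,T]$ does not match the intensity $\D t/t$), so Poisson-integral methods cannot be applied directly to $\sum_n f(L_n)$. The decisive trick is to route the computation through the conditional mean $F(R_{n-1})$, trading $(L_n)$ for the residuals $(R_n)$, which \emph{do} form a PPP via the exponential change of variable $\tau \mapsto T e^{-\tau}$.
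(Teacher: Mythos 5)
Your proof is correct and follows the same structural route the paper uses in Theorem~\ref{thm:0-1_law_formula} and Corollary~\ref{cor:slope_criteria}: represent the slopes via the stick-breaking identity, compensate the marked sum down to a function of the stick remainders, and then exploit the Poisson process embedded in those remainders via Campbell's formula. The packaging of the first reduction differs only cosmetically: the paper works with the single bounded-increment martingale $M_n=\sum_{k\le n}(\varphi(\lambda_k,V_k)-\Phi(L_{k-1}))$ and invokes Kallenberg's martingale dichotomy, whereas you split the same compensation into a conditional Borel--Cantelli step for the marks followed by Dubins--Freedman for the stick lengths; the two are interchangeable. The genuine difference, and an improvement, is your last step. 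The paper converts the criterion $t^{-1}\Phi(t)\notin\Loc(0+)$ into $t^{-1}\phi(t)\notin\Loc(0+)$ by running a second Campbell/Poisson argument on an independent exponential time horizon, whereas you obtain the same equivalence directly from the elementary Tonelli computation $\int_0^T F(s)s^{-1}\,\D s=\int_0^T f(s)s^{-1}\,\D s-T^{-1}\int_0^T f(s)\,\D s$ together with $0\le f\le 1$. This avoids the second point-process construction entirely, shortens the argument, and still generalises verbatim to the bounded-$\varphi$ setting of Theorem~\ref{thm:0-1_law_formula}.
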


Theorem~\ref{thm:CM_local_smoothness} follows  from a novel zero-one law for stick-breaking processes in Theorem~\ref{thm:0-1_law_formula} and Corollary~\ref{cor:slope_criteria}, established in Section~\ref{sec:0-1lawsbres} below, and the characterisation of the law of convex minorant of a L\'evy process (see e.g.~\cite{CM_Fluctuation_Levy}). Since the set of slopes of the concave majorant of the path of $X$ has the law of $\mS$, Theorem~\ref{thm:CM_local_smoothness} can be used to establish implies the following characterisation of the smoothness of the convex hull.

\begin{thm}
\label{thm:CM_global_smoothness}
The boundary of the convex hull of the graph 
$t\mapsto(t,X_t)$, $t\in[0,T]$, 
of a path of any L\'evy process $X$ 
is continuously differentiable (as a closed curve in $\R^2$) a.s. if and only if~\eqref{eq:integral_slope} holds for all bounded intervals $I$ in $\R$. Moreover, this is equivalent to the set $\mS$ being dense in $\R$ a.s. 
\end{thm}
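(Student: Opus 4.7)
The plan is to split the theorem into two equivalences that we handle in sequence: (A) $\mathcal{S}$ is a.s. dense in $\mathbb{R}$ if and only if \eqref{eq:integral_slope} holds for every bounded interval $I\subset\R$; and (B) the boundary of the convex hull of the graph is a.s. $C^1$ if and only if $\mathcal{S}$ is a.s. dense in $\mathbb{R}$. Equivalence (A) follows by a soft countability argument: since $\mathcal{S}$ is a.s. countable, it is dense in $\R$ iff it meets every bounded open interval with rational endpoints, a countable family $\mathcal{I}_\Q$. By Theorem~\ref{thm:CM_local_smoothness}, for each $I\in\mathcal{I}_\Q$ the event $\{|\mathcal{S}\cap I|=\infty\}$ has probability $0$ or $1$, with the value $1$ exactly when \eqref{eq:integral_slope} holds for that $I$; a countable intersection/union then gives a.s. density iff \eqref{eq:integral_slope} holds for all $I\in\mathcal{I}_\Q$. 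Monotonicity of \eqref{eq:integral_slope} in $I$ and sandwiching an arbitrary bounded interval between rational-endpoint ones extends the condition from $\mathcal{I}_\Q$ to all bounded intervals.

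For equivalence (B), write $L$ for the convex minorant and $U$ for the concave majorant on $[0,T]$, with slope sets $\mathcal{S}$ and $\wh{\mathcal{S}}$, respectively, equal in law. The boundary is the closed curve formed by the graphs of $L$ and $U$ joined at the two corners $(0,0)$ and $(T,X_T)$. The key geometric fact to establish is: \emph{$L$ is $C^1$ on $(0,T)$ iff $\mathcal{S}$ has no gaps in its convex hull, i.e.\ there is no pair $s_1<s_2$ in $\mathcal{S}$ with $\mathcal{S}\cap(s_1,s_2)=\emptyset$}. Indeed, convexity arranges the face intervals of $L$ monotonically by slope on $[0,T]$; a gap between consecutive slopes $s_1<s_2$ forces the two corresponding face intervals to share an endpoint, at which $L$ has distinct one-sided derivatives $s_1\ne s_2$, producing a corner. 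Conversely, if $\mathcal{S}$ has no gaps, then between any two face intervals infinitely many face intervals of intermediate slopes intervene, so no two face intervals share an endpoint; at every $t_0\in(0,T)$ not in the interior of a face interval, density forces $L'(t_0^-)=L'(t_0^+)$, while on face intervals $L$ is locally affine, so $L$ is $C^1$ on $(0,T)$. The same analysis applies to $U$. For the corners at $(0,0)$ and $(T,X_T)$, the tangents along $L$ and $U$ have slopes $\inf\mathcal{S}$, $\sup\wh{\mathcal{S}}$ and $\sup\mathcal{S}$, $\inf\wh{\mathcal{S}}$ respectively; for the closed curve to be $C^1$ at a corner, both outgoing tangents there must be vertical, requiring the corresponding infima/suprema to be $\pm\infty$. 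Combining the interior condition (no gaps in $\mathcal{S}$ and $\wh{\mathcal{S}}$) with the corner conditions ($\inf\mathcal{S}=\inf\wh{\mathcal{S}}=-\infty$, $\sup\mathcal{S}=\sup\wh{\mathcal{S}}=+\infty$), and using that $\mathcal{S}\stackrel{d}{=}\wh{\mathcal{S}}$, gives that boundary $C^1$ a.s. is equivalent to $\mathcal{S}$ dense in $\R$ a.s.

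The main obstacle is the geometric characterization of the $C^1$-locus of $L$: carefully verifying differentiability at ``non-face'' points $t_0\in(0,T)$ that lie in the (possibly dense, measure-zero) complement of the union of open face intervals. At such points no local linearity is available, and one must argue via the monotone arrangement of face intervals by slope and the density of $\mathcal{S}$ that the one-sided derivatives collapse to a common limit, so that $L$ acquires a classical derivative there. This argument mirrors the mechanism by which a Cantor-type convex function can be $C^1$ despite being linear on a dense union of intervals. Once this lemma is in place, the rest of the proof is a matter of bookkeeping with the equidistribution of $\mathcal{S}$ and $\wh{\mathcal{S}}$ and the countable argument used in (A).
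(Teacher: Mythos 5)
Your decomposition into (A) and (B) is reasonable, and part (A) (countability reduction together with Theorem~\ref{thm:CM_local_smoothness}) is correct and matches the paper's implicit use. Your endpoint analysis for part (B) is also essentially right: for infinite variation processes $\inf\mS=-\infty$ and $\sup\mS=\infty$ automatically (Rogozin), so the corners $(0,0)$ and $(T,X_T)$ have vertical tangents, and in the finite variation case $\mS$ is bounded and the corner test fails.

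The genuine gap is in your ``key geometric fact'': the claim that $L$ is $C^1$ on $(0,T)$ iff there is no pair $s_1<s_2\in\mS$ with $\mS\cap(s_1,s_2)=\emptyset$. That pairwise condition is strictly weaker than what continuity of $C'$ requires. Consider the slope set $\mS=\{q\in\Q\,:\,q<0\text{ or }q>1\}$. Every pair $s_1<s_2\in\mS$ has $\mS\cap(s_1,s_2)\ne\emptyset$ (no ``gap'' in your sense), yet there is a time $t_1\in(0,T)$ where the negative-slope faces accumulate from the left with $C'(t_1^-)=0$ and the $>1$-slope faces start with $C'(t_1)=1$, a jump of size $1$. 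The problem is a gap between two \emph{accumulation points} of $\mS$ that are not themselves in $\mS$. Your argument ``no two face intervals share an endpoint; at non-face $t_0$ density forces $L'(t_0^-)=L'(t_0^+)$'' invokes density of $\mS$ in an interval of the form $(C'(t_0^-),C'(t_0^+))$, but ``no gaps'' gives you density only in the convex hulls of pairs from $\mS$, not between limit points. The correct characterisation is that $C'$ is continuous on $(0,T)$ iff $\mS$ is dense in $(\inf\mS,\sup\mS)$, equivalently $\mL(\mS)\supseteq(\inf\mS,\sup\mS)$. (In the L\'evy setting, the pathological case would correspond to $\mL(\mS)$ being a proper closed set that is neither $\emptyset$ nor $\R$, which Conjecture~\ref{conj:dichotomy} asserts does not happen --- but that conjecture is open and cannot be assumed here, and in any case your lemma is a deterministic statement that must hold for arbitrary $\mS$.)

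The paper sidesteps this entirely with a shorter argument that you may want to adopt: the right-continuous derivative $C'$ has image exactly $\mL^+(\mS)\cup\mS$. If $C'$ is continuous, the intermediate value theorem and unboundedness of $C'$ force $\mL^+(\mS)\cup\mS=\R$, and since $\mS$ is countable this gives $\mL^+(\mS)=\R$ and hence density. Conversely, if $\mL(\mS)=\R$ then, since the interior of $\mL(\mS)$ is contained in $\mL^+(\mS)\cap\mL^-(\mS)$, one has $\mL^+(\mS)=\R$, so $C'$ is a non-decreasing right-continuous function with image $\R$, hence continuous. This avoids any discussion of gaps between elements versus gaps between limit points.
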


It is clear that if the set of slopes $\mS$ is not dense in $\R$, the convex hull cannot possess a smooth boundary. Indeed, a gap in $\mS$ (i.e. an open interval contained in the complement $\R\setminus\mS$) results in the jump of the derivative of the convex minorant and concave majorant (see Section~\ref{subsec:infinite_var_proofs} below for the proof of Theorem~\ref{thm:CM_global_smoothness}). Intuitively, as suggested by the simulation in Figure~\ref{fig:convex_hull}, $\mS$ is dense if every contact point of $X$ with the boundary of the hull is both preceded and followed by infinitely many contact points between the path and the boundary.
More generally, Theorem~\ref{thm:CM_local_smoothness} (applied to intervals $I=(a,b)$ with rational  $a<b$) implies that, interestingly, the set $\mL(\mS)$ of the accumulation points (see Appendix~\ref{sec:pw-cf} for definition)
of the random set $\mS$ is almost surely constant for any L\'evy process $X$. Theorem~\ref{thm:CM_global_smoothness} thus states that the convex hull of the path of $X$ has a smooth boundary if and only if $\mL(\mS)=\R$ a.s. 
Note that the criterion in Theorem~\ref{thm:CM_local_smoothness} depends neither on the time horizon $T$ nor (by the L\'evy-It\^o decomposition of $X$) on the behaviour of the L\'evy measure of $X$ on the complement of any neighbourhood of zero, even though the set of slopes $\mS$ does depend on both. 

It was conjectured (without proof) in~\cite[Rem.~3.4.4]{MR3152597} that if the paths of $X$ have infinite variation, then $\mS$ has finitely many points on every interval $(a,b)$ if and only if~\eqref{eq:integral_slope} fails for all $a<b$. This is implied by Theorem~\ref{thm:CM_local_smoothness} above and is furthermore equivalent to $\mL(\mS)=\emptyset$ a.s. Moreover, as we will see below (Proposition~\ref{prop:fin_var_criteria}), since $X$ is not compound Poisson with drift, $\mL(\mS)=\emptyset$ a.s. in fact implies that $X$ must be of infinite variation.

An infinite variation process $X$ is \textit{abrupt} if the following Dini derivatives are infinite at every local minimum $t$ of the path of $X$,
$\limsup_{\ve\ua0}(X_{t+\ve}-X_{t-})/\ve=-\infty$ and $\liminf_{\ve\downarrow0}(X_{t+\ve}-X_{t})/\ve=\infty$, where $X_{t-}$ is the left limit of the trajectory at time $t$. The notion of abruptness, introduced by Vigon in his PhD thesis~\cite[Def.~12.1.1]{vigon:tel-00567466} (see also~\cite[Def.~1.1]{MR1947963}), 
captures L\'evy processes that approach and leave very rapidly each local minimum of their trajectory. Interestingly, the main result~\cite[Thm~1.3]{MR1947963} states that an infinite variation process $X$ is abrupt if and only if condition~\eqref{eq:integral_slope} fails for all intervals in $\R$. Since the minimum is a contact point between the path of $X$ and its convex minorant, abrupt L\'evy processes are unlikely to have smooth convex minorants. In fact, the criteria in Theorem~\ref{thm:CM_global_smoothness} and~\cite[Thm~1.3]{MR1947963} imply that a L\'evy process $X$ is abrupt if and only if $\mL(\mS)=\emptyset$ a.s.  An \emph{eroded} L\'evy process $X$ defined in~\cite[Def.~1.2]{VigonConjecture} (see also~\cite[App.~D, p.~10]{vigon:tel-00567466}) has infinite variation and the following Dini derivatives equal to zero at every local minimum $t$ of the path of $X$, $\limsup_{\ve\ua0}(X_{t+\ve}-X_{t-})/\ve=0$ and $\liminf_{\ve\downarrow0}(X_{t+\ve}-X_{t})/\ve=0$. These processes approach and leave their local minima very slowly and are good candidates to possess a smooth convex minorant. However, it follows from~\cite[Thm~1.4]{VigonConjecture} (which can be also be derived from~\cite[Prop~3.6]{MR1947963}) and Theorem~\ref{thm:CM_local_smoothness} that an infinite variation L\'evy process is eroded if and only if $0\in\mL(\mS)$ a.s. is approached continuously by the slopes of the minorant from \textit{both} sides, i.e. $0\in\mL^-(\mS)\cap\mL^+(\mS)$ a.s. (see Appendix~\ref{sec:pw-cf} below for the definition of $\mL^\pm(\mS)$).

It is clear that, if $X$ is abrupt, then $(X_t-rt)_{t\ge 0}$ is also abrupt for any $r\in\R$. However, this invariance may fail for an eroded process. We define a L\'evy process $X$ to be \emph{strongly eroded} if $(X_t-rt)_{t\ge 0}$ is eroded for every $r\in\R$, which is equivalent to $\mL^-(\mS)\cap\mL^+(\mS)=\R$ a.s. Since the interior of $\mL(\mS)$ is contained in $\mL^-(\mS)\cap\mL^+(\mS)\subseteq\mL(\mS)$ by definition, the process $X$ is strongly eroded if and only if $\mL(\mS)=\R$ a.s. or, equivalently, if the boundary of its convex hull is smooth. Since the respective criteria on the law of $X$ in Theorem~\ref{thm:CM_global_smoothness} and~\cite[Thm~1.3]{MR1947963} are \textit{not} complementary, an interesting question, closely related to Vigon's point-hitting conjecture discussed below (see Conjecture~\ref{conj:vigon}), is which (if any) infinite variation processes satisfy~\eqref{eq:integral_slope} for some bounded intervals $I$ but not for others. In particular, are there any eroded processes that are not strongly eroded? (See  Section~\ref{subsec:where_And_how_smoothness_fails} below for further discussion of these questions.) 
 
The class of strongly eroded L\'evy processes, defined in terms of the transition probabilities by the criterion in Theorem~\ref{thm:CM_global_smoothness}, has a rich structure. For example, it contains families of processes with symmetric and asymmetric L\'evy measures, including a standard Cauchy process but excluding all non-standard Cauchy (i.e. weakly 1-stable) processes with asymmetric L\'evy measures, see Section~\ref{sec:examples&specialcases} below. Moreover, a strongly eroded process has no Gaussian component (by Theorem~\ref{thm:zeta_criterion}(ii-a)) and, since it satisfies $\mL(\mS)=\R$ a.s., has paths of infinite variation (by Proposition~\ref{prop:fin_var_criteria}). Its Blumenthal--Getoor index is thus greater or equal to one while the related index $\beta_-$, defined in~\eqref{eq:Pruitt_index} below (cf.~\cite{MR632968}), is less or equal to one (see Proposition~\ref{prop:index_criteria}). More generally, for any strongly eroded L\'evy process $X$, the L\'evy process $X+Y$ is strongly eroded for any L\'evy process $Y$ of finite variation 
(see Proposition~\ref{prop:fin_var_perturbation} below). In contrast, if $Y$ and $X$ are both strongly eroded and independent of each other, the L\'evy process $X+Y$ need not (but, of course, could) be strongly eroded, see Example~\ref{ex:addition2} below. The properties of strongly eroded L\'evy processes will be discussed in more detail in the remainder of Section~\ref{sec:introduction} and in Section~\ref{sec:examples&specialcases}.
 
It is natural to attempt to construct the non-random set of limit slopes $\mL(\mS)$ directly from the characteristics of an arbitrary L\'evy process $X$. It turns out that, if $X$ is of finite variation, $\mL(\mS)$ is a singleton given by the natural drift of the process, see Table~\ref{tab:summary} below for an overview of our results. In the infinite variation case, we characterise $\mL(\mS)$ up to Conjecture~\ref{conj:dichotomy} stated below, which is implied by Vigon's point-hitting conjecture~\cite[Conj.~1.6]{VigonConjecture}  (see the discussion of  Conjectures~\ref{conj:dichotomy} and~\ref{conj:vigon} below).
More precisely, if Vigon's conjecture were true,
our sufficient condition for $X$ to be strongly eroded (i.e. $\mL(\mS)=\R$ a.s.), given in terms of the characteristic exponent of $X$, 
would also be necessary, and its complement would imply abruptness (i.e. $\mL(\mS)=\emptyset$~a.s.). Moreover,
via Orey's process in Example~\ref{ex:orey} below, if Conjecture~\ref{conj:vigon} were true, there would exist   
a strongly eroded L\'evy processes whose path variation is arbitrarily close to two. This is in contrast to all known examples of strongly eroded L\'evy processes, which turn out to have Blumenthal--Getoor index equal to one (see Section~\ref{sec:examples&specialcases} below). However, if Vigon's conjecture is not true, there would exist an
infinite variation L\'evy process with slopes of the convex minorant accumulating at some deterministic values but not at others. Differently put, in this case the non-random set $\mL(\mS)$ would be a proper closed subset of $\R$, implying that kinks in the boundary of the hull would constitute a proper subset of the contact points between the boundary and the closure (in $\R^2$) of the graph of the path. As it is not easy to imagine a boundary of the hull of the path being smooth in some regions but not in others, our results could perhaps be viewed as further evidence for Vigon's point-hitting conjecture.

\subsection{Where and how  does the continuous differentiability of the boundary fail?}  
\label{subsec:where_And_how_smoothness_fails}
This change of perspective sheds light on where the smoothness features of the boundary discussed above come from. 
Before stating our results in detail, we give an overview in
Table~\ref{tab:summary}. Let $C:[0,T]\to\R$ denote the piecewise linear convex minorant of the path of $X$
on $[0,T]$, see e.g.~\cite[Thm~11]{CM_Fluctuation_Levy}. Its right-derivative 
$C':(0,T)\to\R$ is a non-decreasing piecewise constant function with image $\{C'(x)|0<x<T\}\supset\mS$. Differently put, for 
every $r\in\mS$ there exists a maximal open interval $I_r\subset(0,T)$, satisfying $C'(I_r)=\{r\}$. 
Note that, in general, $C'$ may but need not be discontinuous at a boundary point of $I_r$. However, as an increasing right-continuous process, its path is completely determined by the set $\mS$ of values on the dense set $\bigcup_{r\in\mS}I_r$ and its discontinuities are in a one-to-one relationship with the gaps of $\mS$.

The second derivative (as a distribution) is given by a positive Radon measure $\D C'$ on $(0,T)$. Since the set of slopes of the concave majorant and the convex minorant have the same law, the second derivative of the concave majorant has the same law as the (negative) Radon measure $-\D C'$. Thus, the derivative of the boundary of the convex hull over the open interval $(0,T)$ is discontinuous at a point if and only if the point is an atom of the measure $\D C'$. Over the set $\{0,T\}\subset[0,T]$, the discontinuity of the boundary occurs if and only if
the derivative $C'$ is either bounded  below or above.

\begin{table}[ht]
\begin{tabular}{|S|L|T|} 
	\hline
	L\'evy process $X$ 
	& Derivative $C'$ and the limt set $\mL(\mS)$
	& Measure $\D C'$\\
	\hline
	Finite variation & $C'$ bounded below \textit{and} above; $C'$ discontinuous on boundary $\partial I_r$, $\forall r\in \mS$; $\mL(\mS)=\{\gamma_0\}$, where $\gamma_0=\lim_{t\downarrow 0}X_t/t$ a.s., and $\gamma_0\notin \mS$
	& atomic; atoms accumulate from left/right or from both sides at a unique (random) accumulation point in $[0,T]$
	\\ \hline
	Infinite variation \& $\s_1$ locally integrable 
	& $C'$ discontinuous on  boundary  $\partial I_r$, $\forall r\in \mS$; $-\lim_{t\downarrow 0}C'(t)=\lim_{t\ua T}C'(t)=\infty$; $\mL(\mS)=\emptyset$ & atomic; atoms accumulate only at  $0$ and $T$ \\ \hline
	Infinite variation \&  $\s_1(r)=\infty$,  $\forall r\in \R$  & $C'$ is continuous on $(0,T)$; $-\lim_{t\downarrow 0}C'(t)=\lim_{t\ua T}C'(t)=\infty$; $\mL(\mS)=\R$ & singular continuous \\ \hline
\end{tabular}
\caption{Summary of the regularity results of the convex minorant $C:[0,T]\to\R$ of any L\'evy process $X$ of infinite activity (i.e. not compound Poisson with drift) over time horizon $T$. 
The function $\s_1$, defined in~\eqref{eq:s_q} in terms of the generating triplet of $X$, is either locally integrable or everywhere infinite under Conjecture~\ref{conj:vigon}. By Theorem~\ref{thm:zeta_criterion} below, this conjecture is known to hold for most L\'evy processes.}
\label{tab:summary}
\end{table}

Let $\psi$ be the L\'evy-Khintchine exponent~\cite[Def.~8.2]{MR3185174} of the L\'evy process $X$. 
Note that $\Re \psi(u)\leq0$ implies $\Re (1/(1+iur-\psi(u)))>0$ for all $r,u\in\R$,
where $\Re z$ is the real part of a complex $z\in\Complex$ and $i^2=-1$. 
Hence $0<\s_1(r)\leq\infty$ for any $r\in\R$,
where
\begin{equation}
\label{eq:s_q}
\s_1(r)
\coloneqq\frac{1}{2\pi}\int_\R
\Re\frac{1}{1+iur-\psi(u)}\D u,
\qquad r\in\R.
\end{equation} 
The identity in 
Theorem~\ref{thm:Vigon} below (first established in~\cite{VigonConjecture} for L\'evy processes with bounded jumps) yields the following equivalence for any real $a<b$:
\begin{equation}\label{eq:vigon_identity}
\int_a^b \s_1(r)\D r<\infty 
\qquad\text{if and only if}\qquad
\int_0^1 \p(X_t/t\in (a,b))\frac{\D t}{t}<\infty.
\end{equation}
By definition, $\s_1\in \Loc(r)$ 
if and only if $\s_1$ is Lebesgue integrable on a neighbourhood of $r\in\R$.
Thus $r\in\mL(\mS)$  is (by~\eqref{eq:vigon_identity} and Theorem~\ref{thm:CM_local_smoothness}) equivalent to the  condition $\s_1\notin\Loc(r)$, involving only  the characteristic exponent of $X$ and not the law of the increment of $X$. For example, the presence of a non-trivial Brownian component implies that $\mL(\mS)=\emptyset$ and, equivalently, that $\mS$ is locally finite a.s.

The sum of the lengths of the open maximal intervals  $\{I_s:s\in\mS\}$ of constancy of $C'$ equals the time horizon $T$. As suggested by Table~\ref{tab:summary}, the random Radon measure $\D C'$, supported on the complement of $\bigcup_{s\in\mS} I_s$, must therefore be singular a.s. with respect to the Lebesgue measure. 
Thus the Lebesgue decomposition of
$\D C'$
is in general a sum of an atomic and a purely singular continuous components. Moreover, if  Conjecture~\ref{conj:vigon} holds, 
only one of these summands is non-trivial for any L\'evy process. 

The convex minorant of a  compound Poisson process with drift has only finitely many faces, making its derivative necessarily discontinuous at all boundary points of its maximal intervals of constancy.  
If $X$ has infinite activity but finite variation, then $C'$ is bounded on $[0,T]$ and discontinuous at every 
point in $\bigcup_{s\in\mS}\partial I_s$,
but is possibly continuous at the single (random) accumulation point of this set (see Proposition~\ref{prop:fin_var_criteria} below and discussion thereafter for details). If $X$ has too much jump activity or a Brownian component, then $C'$ is discontinuous at every point in  $\bigcup_{s\in\mS}\partial I_s$ and infinite on $\{0,T\}$ (see Proposition~\ref{prop:index_criteria} below and the discussion thereafter for details). Hence, for $C$ to be continuously differentiable on the open interval $(0,T)$ (i.e., for $X$ to be strongly eroded), the process $X$ must have sufficient jump activity to be of infinite variation, but not too much, as its index $\beta_-$~\eqref{eq:Pruitt_index} must be bounded above by one. These features are discussed in more detail in the following subsections (see Table~\ref{tab:dictionary_C'} in Appendix~\ref{sec:pw-cf} for all possible behaviours of the right-derivative of a piecewise linear convex function).

\begin{figure}[ht]
\centering
\includegraphics[width=0.48\textwidth]{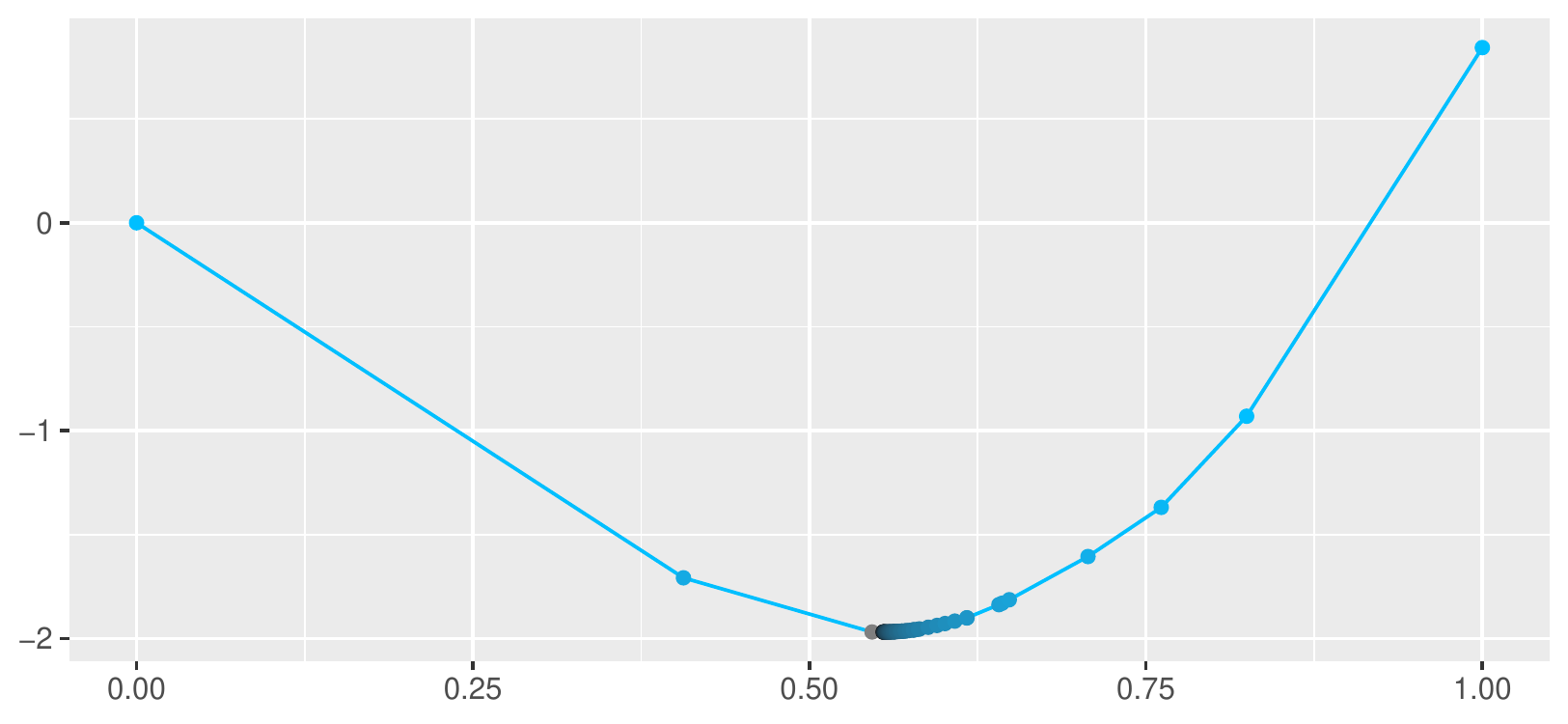}
\includegraphics[width=0.48\textwidth]{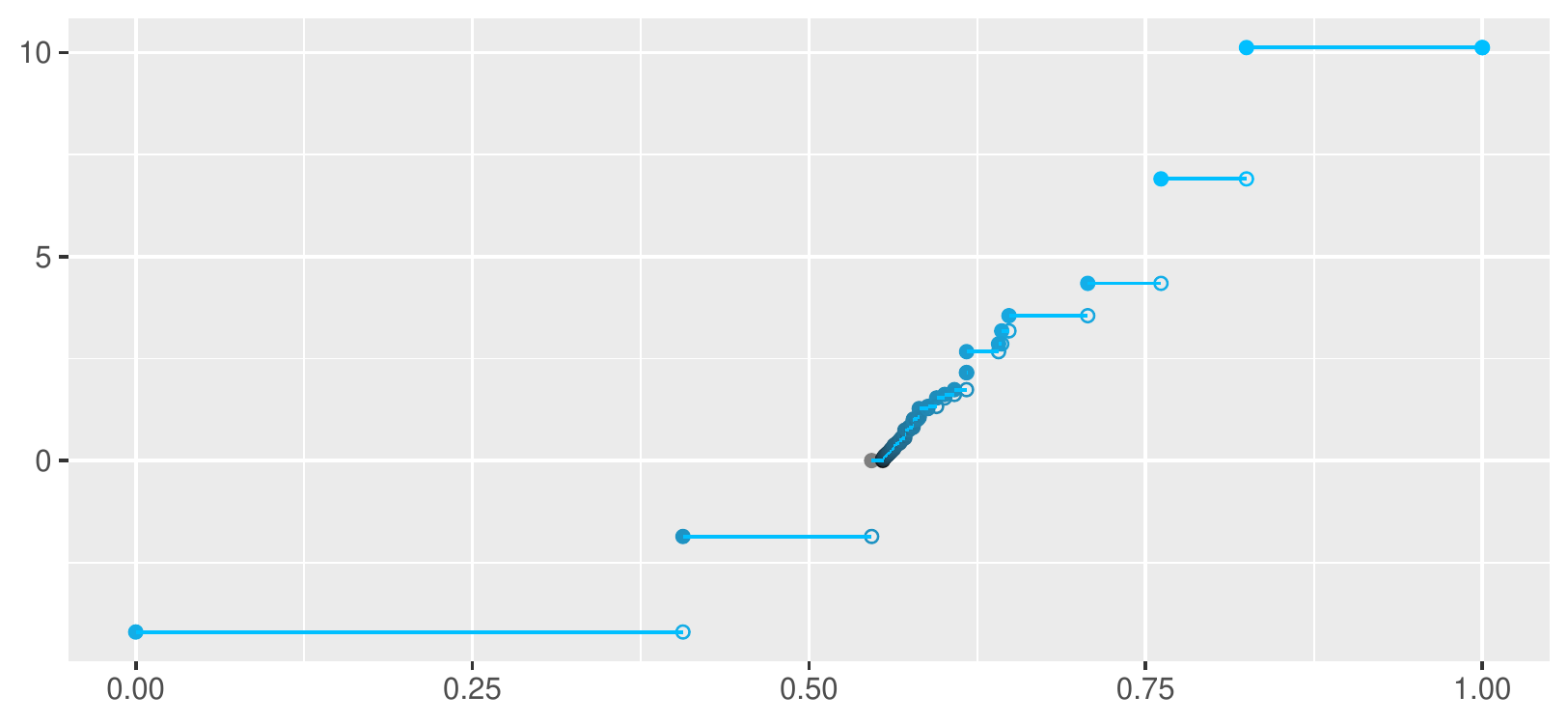}
\includegraphics[width=0.48\textwidth]{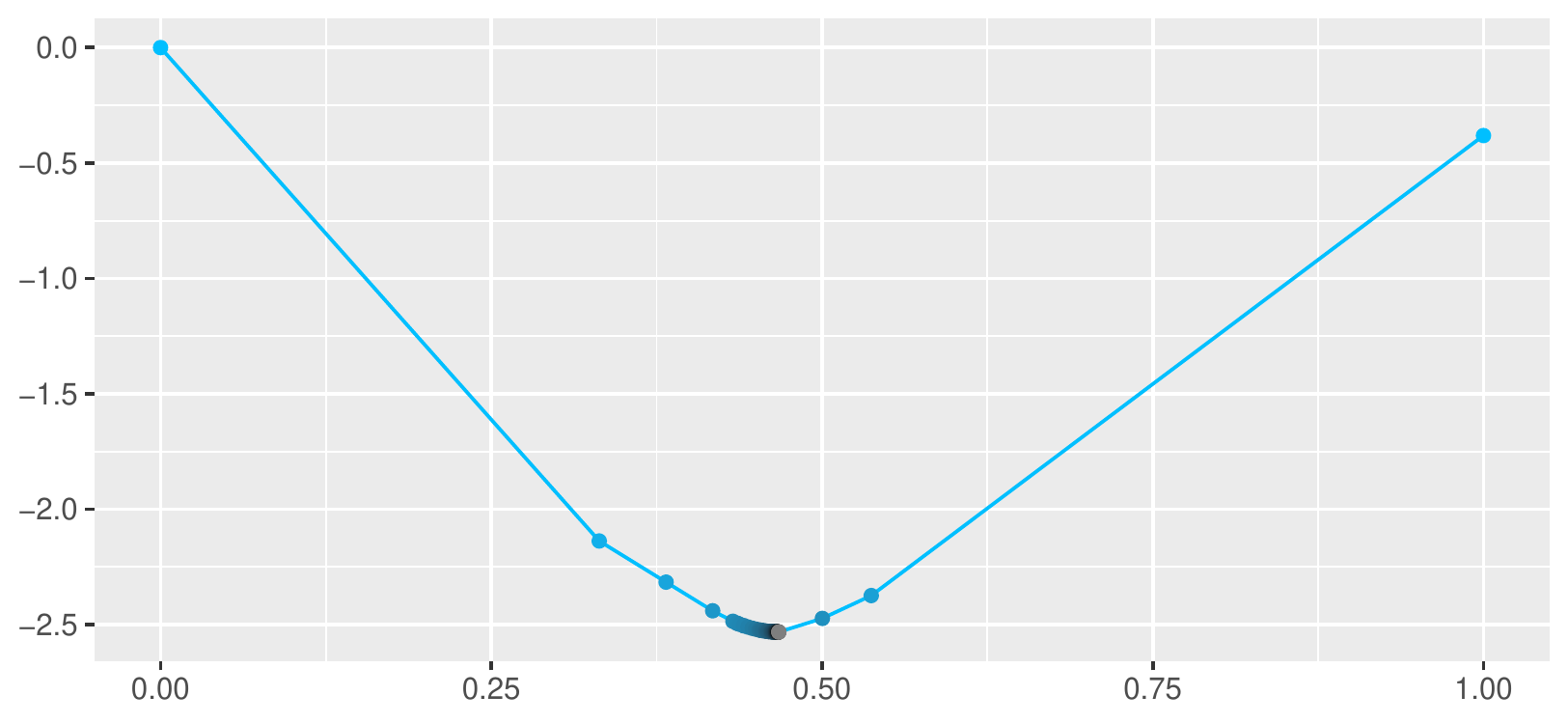}
\includegraphics[width=0.48\textwidth]{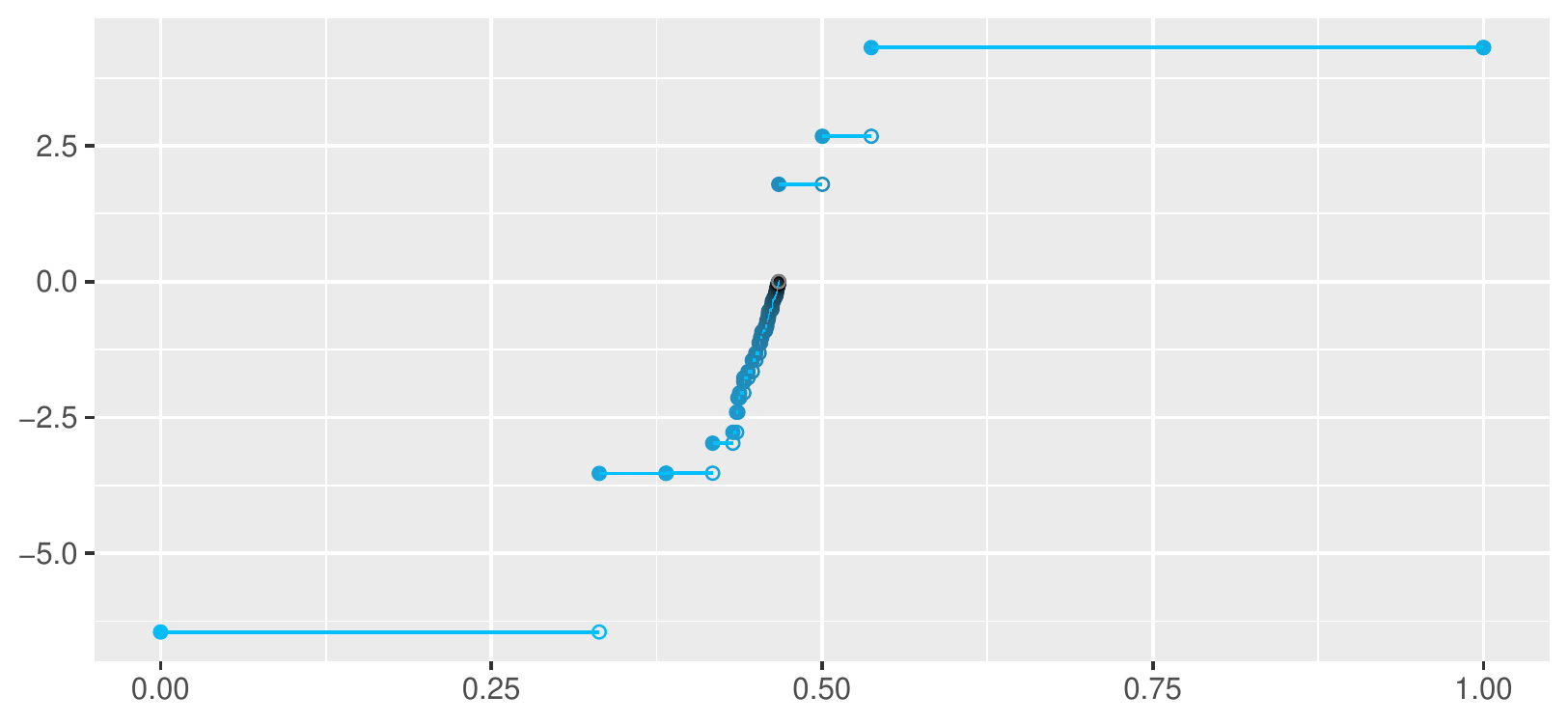}
\includegraphics[width=0.48\textwidth]{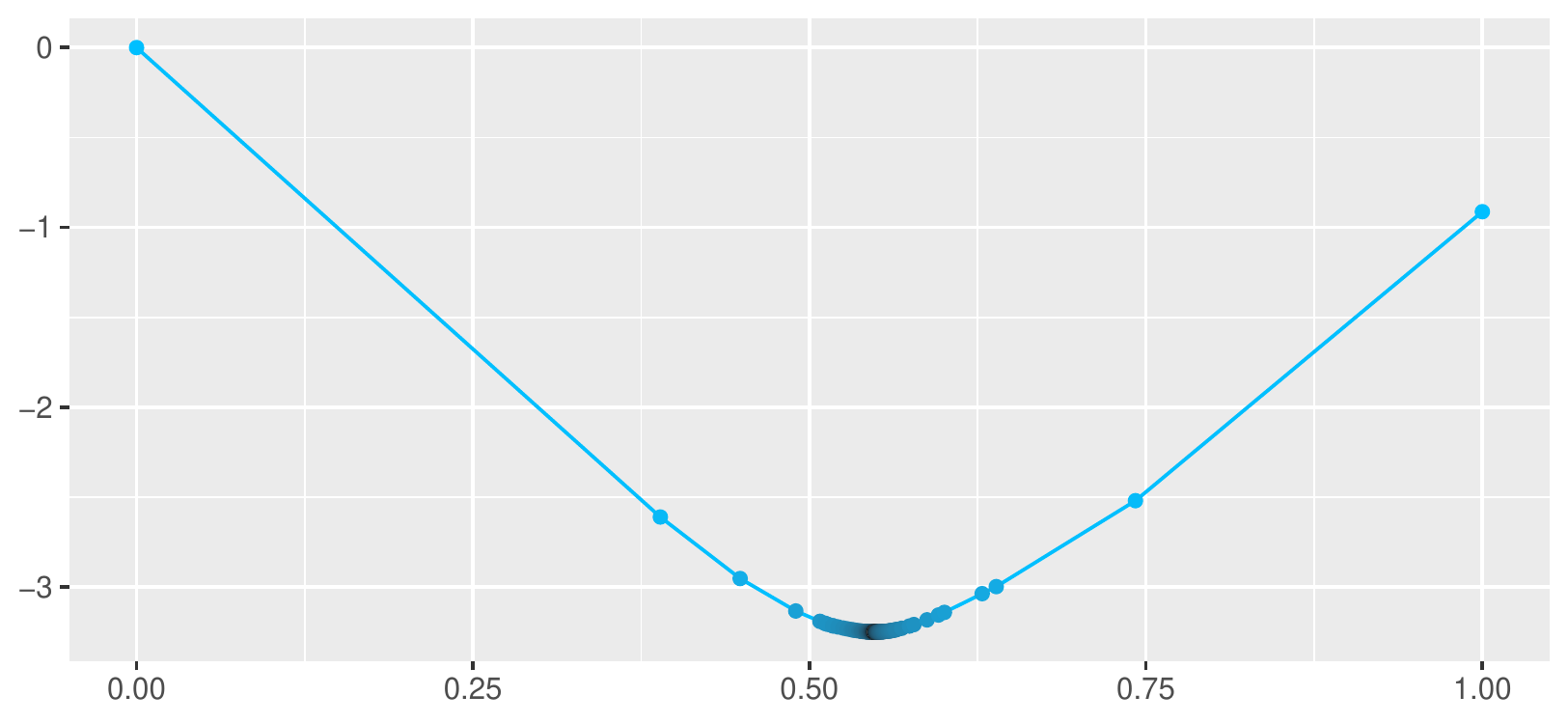}
\includegraphics[width=0.48\textwidth]{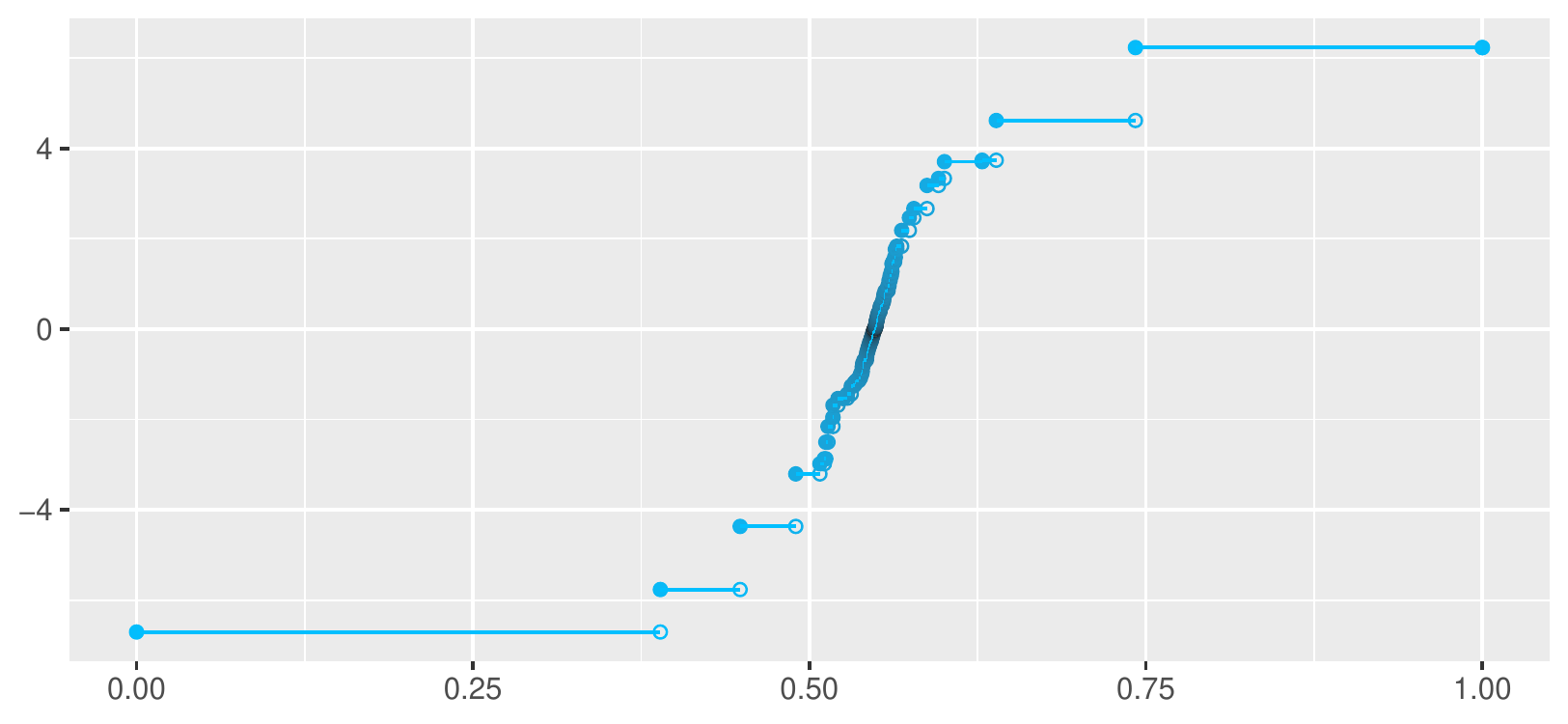}
\caption{\small Behaviour of $C$ and $C'$ for a finite variation infinite activity process $X$. The left panels graph the piecewise linear convex function $C$ (circles mark contact points with the path of $X$). The panels on the right graph the corresponding right-continuous derivative $C'$ (jump-size equals the difference of consecutive slopes). The top (resp. middle) panels correspond to right-accumulation (resp. left-accumulation) in Prop.~\ref{prop:fin_var_criteria}(b) (resp. Prop.~\ref{prop:fin_var_criteria}(c)). Note that in both of these cases, $C'$ is only right-continuous at the accumulation point of jump-times. The bottom panels depict two-sided accumulation 
in Prop.~\ref{prop:fin_var_criteria}(a), making $C'$  continuous at the unique accumulation point of the jump-times.}
\label{fig:point_continuous}
\end{figure}

\subsubsection{Finite variation}
\label{subsec:fin_var_fin_time}
Throughout this subsection we assume $X$ has finite variation but infinite activity. Let~$\gamma_0$ denote the natural drift of $X$ defined in terms of the characteristics in~\eqref{eq:natural_drift} below. Since, by Doeblin's  lemma~\cite[Lem.~15.22]{MR1876169},
it follows that $\p(X_t=\gamma_0 t)=0$ for all $t>0$,
the integrals
\begin{equation}
\label{eq:I_pm}
I_-\coloneqq\int_0^1 \p(X_t<t\gamma_0)\frac{\D t}{t}
\qquad\text{and}\qquad
I_+\coloneqq\int_0^1 \p(X_t>t\gamma_0)\frac{\D t}{t}
\end{equation}
satisfy $I_-+I_+=\infty$, implying that at least one is infinite. 
Moreover, the integrals $I_\pm$ are given in terms
of the law of a pure-jump L\'evy process $(X_t-\gamma_0t)_{t\geq0}$, uniquely determined by the L\'evy measure of $X$. 
Let $\mL^+(\mS)$ (resp. $\mL^-(\mS)$)
be the set of right-accumulation 
(see Appendix~\ref{sec:pw-cf} for definition).
Equivalence~\eqref{eq:vigon_identity} and Theorem~\ref{thm:CM_local_smoothness}
imply that $s\in\mL^+(\mS)$ (resp. $s\in\mL^-(\mS)$) if and only if $\s_1\notin\Loc(s+)$ (resp. $\s_1\notin\Loc(s-)$), where  a function $f$  is in $\Loc(s+)$ (resp. $\Loc(s-)$) if and only if $f\cdot \1_{(s,\infty)}\in\Loc(s)$ (resp. $f\cdot\1_{(-\infty,s)}\in\Loc(s)$). In particular, like $\mL(\mS)$, the limit sets $\mL^+(\mS)$ and $\mL^-(\mS)$ are also constant a.s. 

\begin{prop}
\label{prop:fin_var_criteria}
Let $X$ have infinite activity and finite variation. Then the derivative $C'$ (and thus the set of slopes $\mS$) is bounded from below and above. $C'$ is discontinuous on $\bigcup_{r\in\mS}\partial I_r$, where $I_r$ is the maximal interval of constancy of $C'$ corresponding to slope $r$, and 
the limit set of slopes is a singleton $\mL(\mS)=\{\gamma_0\}$ (the natural drift $\gamma_0$ of $X$ is defined in~\eqref{eq:natural_drift}). Time $v\in[0,T]$ at which the process $(X_t-\gamma_0t)_{t\geq0}$ attains its minimum in $[0,T]$ is a.s. unique. If $v>0$, denote the left limit of $C'$ at $v$ by $C'(v-):=\lim_{t\ua v}C'(t)$. Then we have:
\begin{itemize}[leftmargin=2em, nosep]
    \item[{\nf(a)}] if $I_+=I_-=\infty$, then $v\in(0,T)$,  $\mL^-(\mS)=\mL^+(\mS)=\{\gamma_0\}$ and $C'(v-)=C'(v)=\gamma_0$ a.s.;
    \item[{\nf(b)}] if $I_-<\infty$, then $v\in[0,T)$ with $\p(v=0)>0$, $\mL^-(\mS)=\emptyset$, $\mL^+(\mS)=\{\gamma_0\}$, $C'(v)=\gamma_0$ a.s. and, on the event $\{v>0\}$, we have $C'(v-)<\gamma_0$ a.s.;
    \item[{\nf(c)}] if $I_+<\infty$, then $v\in(0,T]$ with $\p(v=T)>0$, $\mL^+(\mS)=\emptyset$, $\mL^-(\mS)=\{\gamma_0\}$, $C'(v-)=\gamma_0$ a.s. and, on the event $\{v< T\}$, we have $C'(v)>\gamma_0$ a.s.
\end{itemize}
\end{prop}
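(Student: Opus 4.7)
The plan is to reduce to the centred process $Y_t := X_t - \gamma_0 t$, a pure-jump L\'evy process of infinite activity, finite variation, and vanishing natural drift; in particular $Y_t/t \to 0$ almost surely as $t \downarrow 0$. Since $C(t) = \gamma_0 t + C^Y(t)$, one has $\mS = \gamma_0 + \mS^Y$ and $\mL^\pm(\mS) = \gamma_0 + \mL^\pm(\mS^Y)$, reducing the proposition to the analogous statements for $Y$ with $0$ in place of $\gamma_0$. Boundedness of $\mS^Y$ then follows from the a.s.~finiteness of $\sup_{t \in (0,T]} |Y_t|/t$ (combining $Y_t/t \to 0$ with c\`adl\`ag regularity of $Y$ on $[\delta,T]$ for each $\delta > 0$), and the uniqueness of the minimiser $v$ is a standard consequence of the diffuseness of the increments of $Y$.

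The crux of the argument is the identification $\mL(\mS^Y) = \{0\}$, which I would derive from Theorem~\ref{thm:CM_local_smoothness} combined with the integrability estimate
\begin{equation}\label{eq:key_int}
\int_0^1 \p\bigl(|Y_t|/t \geq \delta\bigr)\,\frac{\D t}{t} < \infty \qquad \text{for every } \delta > 0.
\end{equation}
Granting~\eqref{eq:key_int}, Theorem~\ref{thm:CM_local_smoothness} with $I = (-\delta,\delta)$ yields $0 \in \mL(\mS^Y)$ (since $\int_0^1 \D t/t = \infty$), and for any $r \neq 0$ picking $\ve < |r|/2$ bounds $\p(Y_t/t \in (r-\ve, r+\ve))$ by $\p(|Y_t|/t \geq |r|/2)$, so~\eqref{eq:key_int} also excludes $r$ from $\mL(\mS^Y)$. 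Proving~\eqref{eq:key_int} is the main technical obstacle. My approach is the decomposition $Y = Y^b + Y^s$ into big-jump ($|\Delta Y|>1$) and small-jump ($|\Delta Y|\leq 1$) components: $Y^b$ is compound Poisson of rate $\lambda = \nu(\{|x|>1\})$ with $\p(Y^b_t \neq 0) \leq \lambda t$, contributing $O(1)$ to the integral; for $Y^s$, whose natural drift is also $0$ and whose characteristic exponent satisfies $\psi^s(u) = o(|u|)$ as $|u| \to \infty$ (a hallmark of finite variation exponents, obtained by splitting the L\'evy integral at any small truncation level), one invokes the identity~\eqref{eq:vigon_identity} for $Y^s$ together with local integrability on $\R \setminus \{0\}$ of the corresponding $\s_1$ function read off from this asymptotic.

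The case analysis then invokes Rogozin's criterion applied to $Y$: the integrals $I_\pm$ in~\eqref{eq:I_pm} coincide with Rogozin's integrals $\int_0^1 \p(\pm Y_t > 0)\,\D t/t$ for the regularity of $0$ from below/above for $Y$. By a standard duality for L\'evy processes with diffuse increments (write $\{v=0\} = \{Y_s \geq 0, \forall s \in [0,T]\}$, and $\{v=T\} = \{\wt Y_u \leq 0, \forall u \in [0,T]\}$ for the time-reversed $\wt Y \sim Y$), one has $\p(v=0) > 0$ iff $0$ is irregular for $(-\infty,0)$ for $Y$ (i.e.~$I_- < \infty$), and $\p(v=T) > 0$ iff $I_+ < \infty$; hence in case (a) $v \in (0,T)$, in (b) $v \in [0,T)$ with $\p(v=0)>0$, and in (c) symmetrically. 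Applying Theorem~\ref{thm:CM_local_smoothness} to $(-\infty,0)$ and $(0,\infty)$ separately shows that $\mS^Y \cap (-\infty,0)$ is infinite iff $I_- = \infty$ and $\mS^Y \cap (0,\infty)$ is infinite iff $I_+ = \infty$; combined with $\mL(\mS^Y)=\{0\}$ this pins down $\mL^\pm(\mS^Y)$ in each case. Finally, $\gamma_0 \notin \mS$ almost surely (by diffuseness, $\p(X_{L_i} = \gamma_0 L_i) = 0$ for each element of the stick-breaking representation of $\mS$ from Theorem~\ref{thm:0-1_law_formula}), so every $r \in \mS$ is isolated in $\mS$ and $C'$ jumps at $\partial I_r$; at $v$, $C'(v\pm)$ equals $\gamma_0$ exactly when slopes of $\mS^Y$ accumulate at $0$ from the corresponding side (so the faces abutting $v$ have slopes converging to $0$), and otherwise equals the strict extremum of the finitely many slopes on that side, completing the identities in (a)--(c).
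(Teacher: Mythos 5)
Your overall plan is sound and most components — the centering $Y=X-\gamma_0 t$, diffuseness giving uniqueness of the minimiser $v$ and $\gamma_0\notin\mS$, the Rogozin-based identification of $\p(v=0)>0$ and $\p(v=T)>0$ with $I_\mp<\infty$, and the half-line applications of Theorem~\ref{thm:CM_local_smoothness} to pin down $\mL^\pm(\mS)$ — match the paper's proof and are correct. The crux, however, is where your argument departs from the paper's, and there it has a genuine gap.

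Your key estimate~\eqref{eq:key_int}, namely $\int_0^1\p(|Y_t|/t\ge\delta)t^{-1}\D t<\infty$, is precisely what the paper also needs, but the paper obtains it directly: since $X_t/t\to\gamma_0$ a.s.\ (Sato, Thm~43.20), the half-line $(0,\infty)$ (resp.\ $(-\infty,0)$) is irregular for the drifted process $(X_t-(\gamma_0+\ve)t)_{t\ge0}$ (resp.\ $(X_t-(\gamma_0-\ve)t)_{t\ge0}$), and Rogozin's criterion converts each of these irregularity statements into the finiteness of $\int_0^1 t^{-1}\p(X_t/t>\gamma_0+\ve)\D t$ and $\int_0^1 t^{-1}\p(X_t/t<\gamma_0-\ve)\D t$. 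This handles the \emph{unbounded} sets $\{|x|\ge\delta\}$ in one stroke. By contrast, you decompose $Y=Y^b+Y^s$ and, for $Y^s$, invoke the equivalence~\eqref{eq:vigon_identity} together with local integrability of $\s_1$ on $\R\setminus\{0\}$ inferred from $\psi^s(u)=o(|u|)$. But~\eqref{eq:vigon_identity} as quoted only relates $\s_1$-integrals over \emph{bounded} intervals $(a,b)$ to the occupation integrals $\int_0^1\p(X_t/t\in(a,b))t^{-1}\D t$. Local integrability of $\s_1$ away from $0$ yields $\int_0^1\p(Y^s_t/t\in(a,b))t^{-1}\D t<\infty$ for every compact $(a,b)\not\ni 0$, which suffices to conclude $\mL(\mS^Y)\subseteq\{0\}$, but it does \emph{not} by itself give~\eqref{eq:key_int}: the set $\{|x|\ge\delta\}$ is unbounded and you would need integrability of $\s_1$ near $\pm\infty$, which you never argue for and which is not implied by the asymptotic $\psi^s(u)=o(|u|)$ alone. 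This gap also affects the boundedness of $\mS^Y$: your pathwise claim via $\sup_{t\in(0,T]}|Y_t|/t<\infty$ only directly controls $\inf\mS^Y$ (the leftmost face emanates from $(0,0)$), whereas $\sup\mS^Y$ requires a time-reversal argument that you do not spell out; the paper sidesteps this entirely by applying Theorem~\ref{thm:CM_local_smoothness} to the (unbounded) complement $\R\setminus(\gamma_0-\ve,\gamma_0+\ve)$, for which Rogozin supplies the finiteness of the integral, so that $\mS$ is infinite but finite outside every neighbourhood of $\gamma_0$, hence bounded.

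In short: replace the detour through the characteristic exponent and the Vigon identity by the direct argument $X_t/t\to\gamma_0$ a.s.\ $\Rightarrow$ irregularity of both half-lines for $X-(\gamma_0\pm\ve)t$ $\Rightarrow$ Rogozin $\Rightarrow$~\eqref{eq:key_int}. Once~\eqref{eq:key_int} is obtained this way, your reasoning from it — including the case analysis via $I_\pm$, the time-reversal duality for $\{v=T\}$, and the final identification of $C'(v\pm)$ — agrees with the paper and is complete. (One small citation slip: the stick-breaking representation used for $\gamma_0\notin\mS$ is the one from~\cite[Thm~11]{CM_Fluctuation_Levy}, as in~\eqref{eq:Levy-SB}, not from Theorem~\ref{thm:0-1_law_formula}.)
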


By Rogozin's criterion (see e.g.~\cite[Thm~6]{CM_Fluctuation_Levy}),
the integral conditions in Proposition~\ref{prop:fin_var_criteria}(a) are equivalent to $0$ being regular for both half-lines for the process $(X_t-\gamma_0t)_{t\geq0}$. In particular, for the two conditions to hold concurrently it is necessary (but not sufficient) for $X$ to exhibit infinitely many positive and negative jumps in any finite time interval~\cite[Thm~47.5]{MR3185174} and sufficient (but not necessary) for $X$ to be spectrally symmetric. The other cases are also possible since $I_-<\infty$ (resp. $I_+<\infty$) if $X$ is the difference of two stable subordinators and the positive (resp. negative) jumps have a larger stability index. The following corollary is a simple consequence of  Proposition~\ref{prop:fin_var_criteria}, equivalence~\eqref{eq:vigon_identity} and~\cite[Thm~1]{MR1465812}. It characterises which case in Proposition~\ref{prop:fin_var_criteria} occurs in terms of either the L\'evy measure $\nu$ or the characteristic exponent $\psi$ (via $\s_1$ defined in~\eqref{eq:s_q}) of the process $X$.

\begin{cor}
\label{cor:I_pm}
Let $X$ have infinite activity but finite variation with natural drift $\gamma_0$. Then
\[
I_\pm=\infty
\enskip\iff\enskip
\s_1\notin\Loc(\gamma_0\pm)
\enskip\iff\enskip
\int_{(-1,1)} \frac{\max\{\pm x,0\}}{\int_0^{\max\{\pm x,0\}} \ov\nu_\mp(y)\D y}\nu(\D x)=\infty,
\]
where we define $\ov\nu_+(x):=\nu((x,\infty))$ and $\ov\nu_-(x):=\nu((-\infty,-x))$ for $x>0$ and  $\mp:=-(\pm)$.
\end{cor}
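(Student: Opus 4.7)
My plan is to verify the two equivalences separately: the first is a direct consequence of Proposition~\ref{prop:fin_var_criteria} combined with the characterisation of $\mL^\pm(\mS)$ via~\eqref{eq:vigon_identity} already noted in the paper, and the second is the Rogozin-type criterion from~\cite[Thm~1]{MR1465812}, applied to the pure-jump L\'evy process $Y_t\coloneqq X_t-\gamma_0 t$.

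For the equivalence $I_\pm=\infty\iff \s_1\notin\Loc(\gamma_0\pm)$, I would simply inspect the three cases of Proposition~\ref{prop:fin_var_criteria}: in cases (a) and (b) we have $\gamma_0\in\mL^+(\mS)$ a.s., while in case (c) $\gamma_0\notin\mL^+(\mS)$. Since $I_++I_-=\infty$, these are respectively the cases $I_+=\infty$ and $I_+<\infty$, and hence $\gamma_0\in\mL^+(\mS)\iff I_+=\infty$. The analogous statement for $\mL^-(\mS)$ and $I_-$ holds by the same inspection. The paper has already recorded (using~\eqref{eq:vigon_identity} and Theorem~\ref{thm:CM_local_smoothness} applied to the one-sided intervals $(\gamma_0,\gamma_0+\delta)$ and $(\gamma_0-\delta,\gamma_0)$ for $\delta>0$) that $\gamma_0\in\mL^\pm(\mS)\iff \s_1\notin\Loc(\gamma_0\pm)$, so chaining the two yields the first equivalence.

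For the second equivalence, I would pass to $Y_t\coloneqq X_t-\gamma_0 t$, which, since $\gamma_0$ is the natural drift of $X$, is a pure-jump L\'evy process of finite variation with L\'evy measure $\nu$ and natural drift zero. In particular $\p(X_t>t\gamma_0)=\p(Y_t>0)$ and $\p(X_t<t\gamma_0)=\p(Y_t<0)$, so $I_\pm=\int_0^1\p(\pm Y_t>0)\D t/t$, and by Rogozin's criterion each of these integrals diverges precisely when $0$ is regular for the corresponding half-line under the law of $Y$. The key input is then~\cite[Thm~1]{MR1465812}, which, under the zero natural drift hypothesis just arranged, provides the equivalent $\nu$-integral criterion stated in the corollary. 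The $I_-$ case follows upon applying the same result to $-Y$, whose L\'evy measure is the reflection of $\nu$ (so the roles of $\ov\nu_+$ and $\ov\nu_-$ interchange, matching the $\mp=-(\pm)$ convention in the statement).

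The only real delicacy is bookkeeping of signs in this reduction and making sure the hypotheses of~\cite[Thm~1]{MR1465812} apply on the nose, both of which are straightforward; otherwise the argument is just an assembly of Proposition~\ref{prop:fin_var_criteria} and the cited regularity criterion of Bertoin, which is why the statement is a \emph{simple} consequence as claimed.
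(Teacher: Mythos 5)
Your first equivalence is correct, though slightly more indirect than the paper's one‑line derivation from~\eqref{eq:vigon_identity}; reading off $\gamma_0\in\mL^\pm(\mS)$ case by case from Proposition~\ref{prop:fin_var_criteria} and chaining with the paper's earlier remark that $\mL^\pm(\mS)$ is characterised by $\s_1\notin\Loc(\gamma_0\pm)$ does the job.

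The second equivalence, however, glosses over two points that the paper has to handle explicitly. First, the regularity criterion in~\cite[Thm~1]{MR1465812} requires the L\'evy measure to have \emph{infinite} mass on the negative half‑line; if $\nu((-\infty,0))<\infty$ (in particular if $X$ is spectrally positive) the theorem does not apply ``on the nose'' and a separate, elementary argument is needed. The paper provides exactly this: when $\nu((-\infty,0))<\infty$ one gets $I_+=\infty$ directly from $\p(X_t>\gamma_0 t)\ge e^{-t\nu((-\infty,0))}$, while the integrand $\varpi(x)=x/\int_0^x\ov\nu_-(y)\D y$ is bounded below by $1/\nu((-\infty,0))>0$ so that $\int_{(0,1)}\varpi(x)\nu(\D x)=\infty$ by infinite activity. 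Second, even in the generic case $\nu((-\infty,0))=\infty$, Bertoin's theorem yields the criterion $\int_{(0,1)}\nu((x,\infty))\,\D\varpi(x)=\infty$, not the integral $\int_{(0,1)}\varpi(x)\nu(\D x)$ stated in the corollary; reconciling the two requires a Fubini interchange (and a verification that $\D\varpi$ is a well‑defined Radon measure, i.e.\ that $\varpi$ is monotone and continuous with $\varpi(0+)=0$), which the paper carries out but you do not mention. Neither of these is deep, but both are necessary for the statement as written, so asserting that the hypotheses and reconciliation are ``straightforward'' leaves a genuine gap in the argument.
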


Proposition~\ref{prop:fin_var_criteria} and Corollary~\ref{cor:I_pm} give a complete description 
(in terms of the characteristics of any infinite activity, finite variation L\'evy process $X$) of 
how the continuity of the derivative $C'$ fails, see  Figure~\ref{fig:point_continuous} for all possible behaviours. 
The proof of Proposition~\ref{prop:fin_var_criteria} is based on the criterion in Theorem~\ref{thm:CM_local_smoothness}
and the crucial fact that, as $t\to0$, the quotient $X_t/t$ a.s. stops visiting closed intervals that do not contain  the natural drift $\gamma_0$ (since $\lim_{t\to0}X_t/t=\gamma_0$ a.s.), see Section~\ref{subsec:finite_var_proofs} below for details.
The smoothness of the minorant in the infinite variation case is more intricate precisely because in that case  we do not have a good understanding in general of how frequently the quotient $X_t/t$ visits intervals in $\R$ as $t\to0$.

\subsubsection{Infinite variation}
\label{subsec:infin_var_fin_time}
The set of slopes $\mS$ is unbounded on both sides
for any $X$ of infinite variation, i.e. $\sup \mS =-\inf \mS=\infty$, and hence $-\lim_{t\downarrow 0}C'(t)=\lim_{t\ua T}C'(t)=\infty$ a.s. (cf. Figure~\ref{fig:all_continuous}). Indeed, by Rogozin's theorem~\cite[Thm~47.1]{MR3185174}, asserting that $X_t/t$ takes arbitrarily large positive and negative values at arbitrarily small times $t$ for any $X$ of infinite variation, it is impossible for the convex minorant to start at $0$ or end at $T$ with a finite slope: $-\infty=\liminf_{t\da 0} X_t/t\ge \inf\mS$, and, by time reversal, $\infty=\limsup_{t\da 0} (X_T-X_{T-t})/t\le\sup\mS$.  
Thus the boundary of the convex hull of the path of $X$ is differentiable over the set $\{0,T\}$. Its smoothness over the open interval $(0,T)$, however, is a much more delicate matter. We now state some results to elucidate this structure. 

\begin{figure}[ht]
\centering
\includegraphics[width=0.48\textwidth]{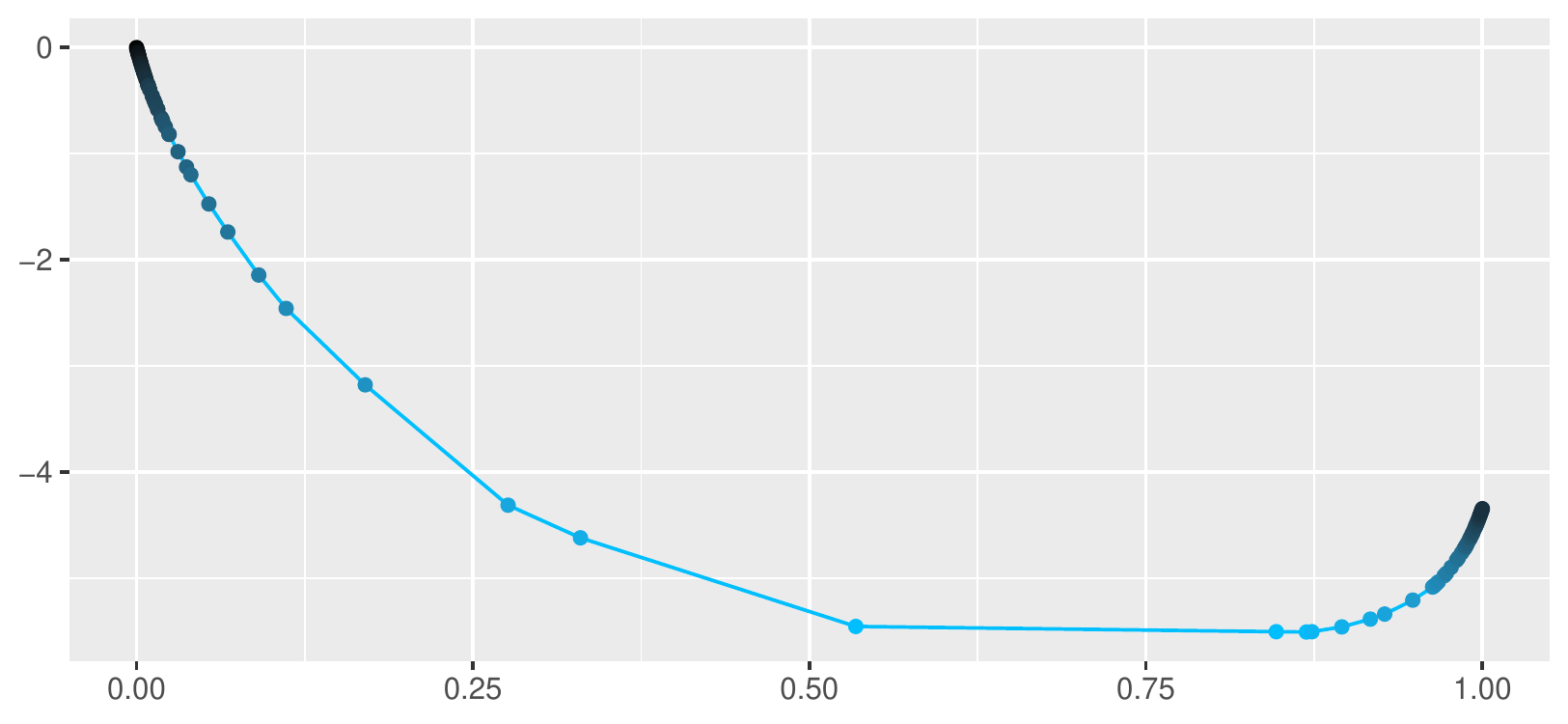}
\includegraphics[width=0.48\textwidth]{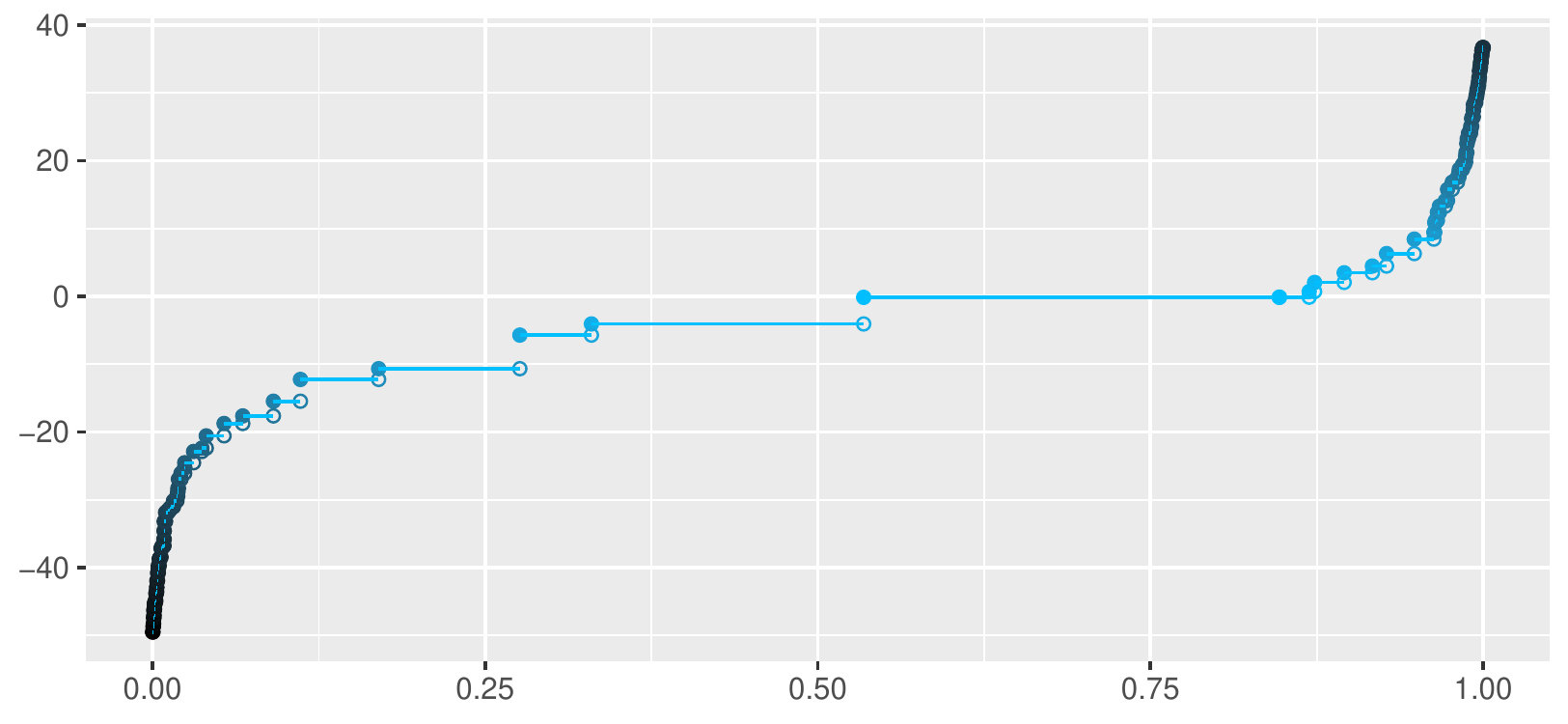}
\includegraphics[width=0.48\textwidth]{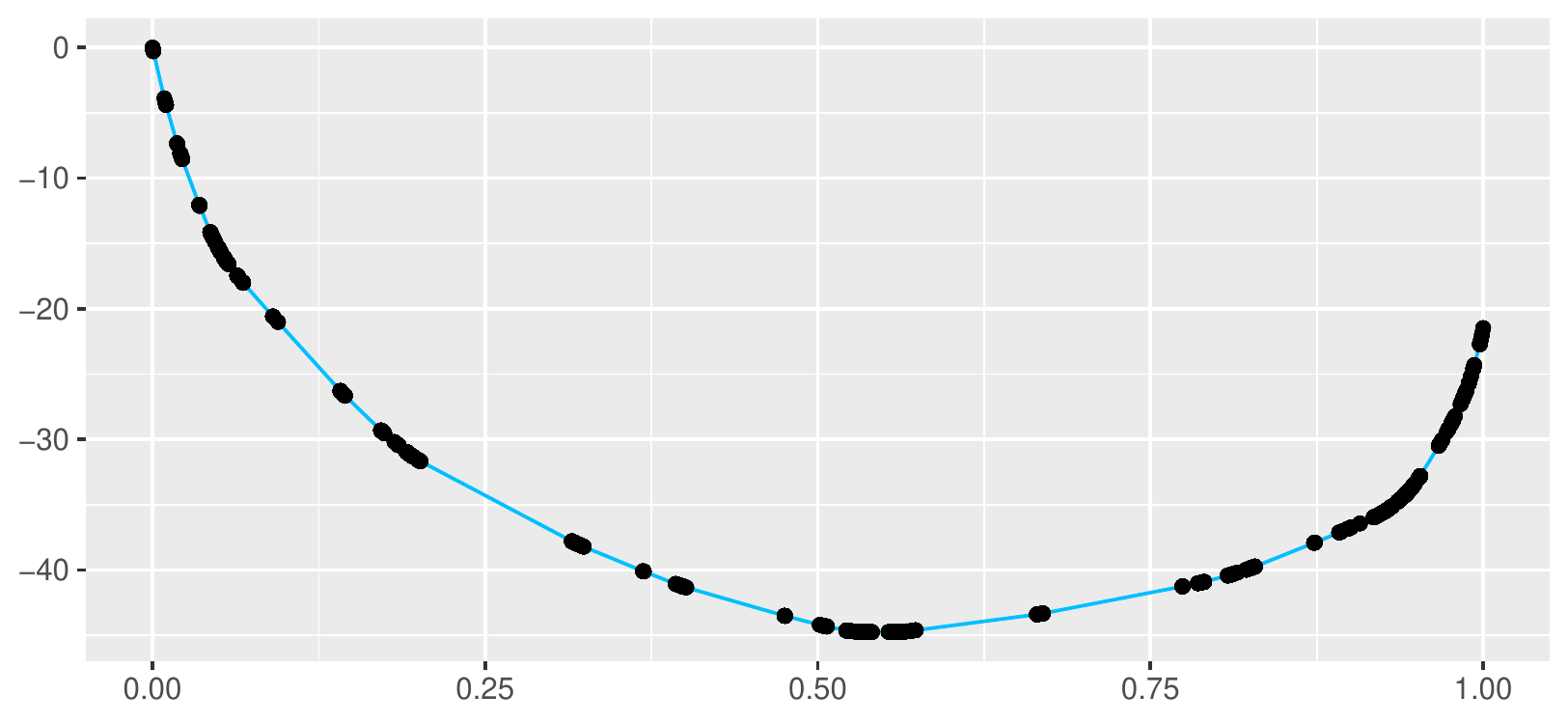}
\includegraphics[width=0.48\textwidth]{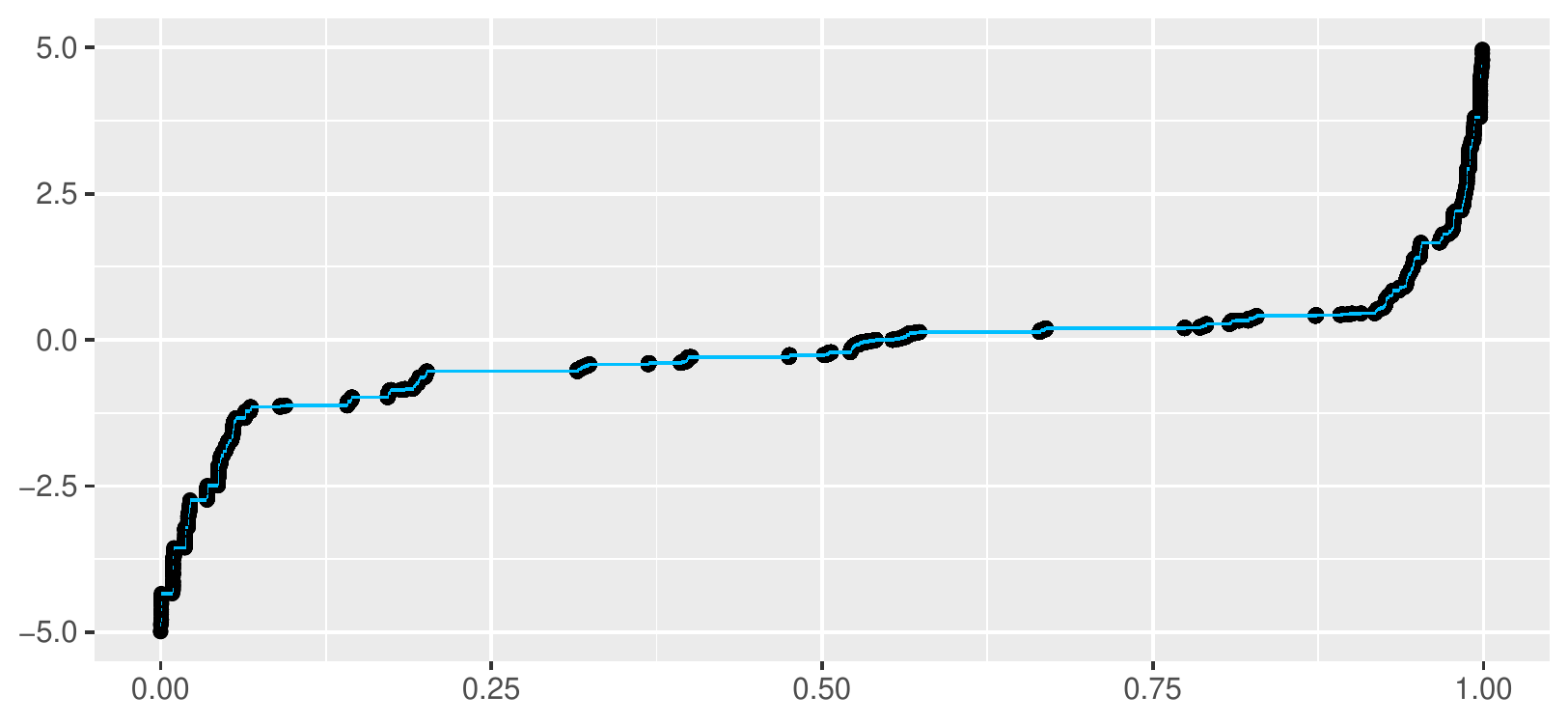}
\caption{\small The left pictures show the graph of a piecewise linear convex function $C$ with circle marks on $\bigcup_{r\in\mS}\partial I_r$. The right pictures show the graph of the derivative $C'$ with visible points of increase marked with a circle. In the top two pictures, there are no points of continuity as $C'$ tends to $\pm\infty$ near times $0$ and $1$. In the bottom two pictures, the function $C$ is continuously differentiable with a singular continuous derivative $C'$.}
\label{fig:all_continuous}
\end{figure}

\vspace{1mm}
\noindent \underline{Strongly eroded L\'evy process, perturbed by a finite variation process, is still strongly eroded.} In fact, for any L\'evy process, such a perturbation shifts the set $\mL(\mS)$ by the natural drift, defined in~\eqref{eq:natural_drift}, of the finite variation process  (given a set $A\subset\R$, define $A+b:=\{a+b\,:\,a\in A\}$ for any $b\in\R$ with convention $\emptyset+b:=\emptyset$). 

\begin{prop}
\label{prop:fin_var_perturbation}
Suppose the L\'evy process $X$ is of the form $X=Y+Z$ for (possibly dependent) L\'{e}vy processes 
$Y$ and $Z$. Let $\mS_X$ and $\mS_Z$ be the sets of slopes of the faces of the convex minorants of $X$ and $Z$, respectively. If $Y$ is of finite variation (possibly finite activity) with natural drift $b$ defined in~\eqref{eq:natural_drift}, then $\mL(\mS_X)=\mL(\mS_Z)+b$.
\end{prop}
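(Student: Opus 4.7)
The plan is to rephrase ``$r\in\mL(\mS_W)$'' via the integral criterion of Theorem~\ref{thm:CM_local_smoothness} and exploit the almost sure convergence $Y_t/t\to b$ as $t\to 0$ (valid since $Y$ is finite variation with natural drift $b$) in a quantitative form. First I would reduce to the case $b=0$. Setting $\tilde Y_t := Y_t - bt$ and $\tilde X_t := X_t - bt = \tilde Y_t + Z_t$, the subtraction of the linear function $bt$ shifts the convex minorant by the same linear function, so $\mS_{\tilde X} = \mS_X - b$ and hence $\mL(\mS_X) = \mL(\mS_{\tilde X}) + b$. Since $\tilde Y$ is a finite variation L\'evy process with zero natural drift, it suffices to prove $\mL(\mS_{\tilde X}) = \mL(\mS_Z)$, i.e.\ to treat the case $b=0$.

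Next I would establish the key estimate: for every $\varepsilon > 0$,
\[
\int_0^1 \p(|Y_t|/t \geq \varepsilon)\,\D t/t < \infty.
\]
If $Y$ is pure compound Poisson (finite activity, drift $0$) with jump intensity $\lambda$, then $\{|Y_t|/t\geq\varepsilon\}\subset\{Y_t\neq 0\}$ has probability at most $1-e^{-\lambda t}\leq \lambda t$, making the integral at most $\lambda$. If $Y$ has infinite activity, Proposition~\ref{prop:fin_var_criteria} applied to $Y$ yields $\mL(\mS_Y)=\{0\}$ and that $\mS_Y$ is bounded almost surely, so $\mS_Y \cap \{|s|\geq\varepsilon\}$ is a.s.\ a bounded subset of $\R$ with no accumulation points and thus finite. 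Theorem~\ref{thm:CM_local_smoothness} applied to the measurable set $\{|s|\geq\varepsilon\}$ then gives finiteness of the integral.

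Finally I would carry out the integral comparison. For any $r\in\R$ and $\varepsilon>0$, the inclusion
\[
\{Z_t/t\in(r-\varepsilon,r+\varepsilon)\}\cap\{|Y_t|/t<\varepsilon\} \subset \{X_t/t\in(r-2\varepsilon,r+2\varepsilon)\}
\]
combined with the elementary bound $\p(A\cap B)\geq\p(A)-\p(B^c)$ gives
\[
\p\bigl(X_t/t\in(r-2\varepsilon,r+2\varepsilon)\bigr) \geq \p\bigl(Z_t/t\in(r-\varepsilon,r+\varepsilon)\bigr) - \p\bigl(|Y_t|/t\geq\varepsilon\bigr).
\]
Integrating against $\D t/t$ on $(0,1)$ and using the key estimate, divergence of the $Z$-integral over $(r-\varepsilon,r+\varepsilon)$ forces divergence of the $X$-integral over $(r-2\varepsilon,r+2\varepsilon)$. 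By the symmetric argument applied to $Z = X + (-Y)$ (note $-Y$ is finite variation with zero natural drift too), divergence of the $X$-integral forces divergence of the $Z$-integral. Invoking Theorem~\ref{thm:CM_local_smoothness} on both sides, we conclude $r\in\mL(\mS_X)$ iff $r\in\mL(\mS_Z)$, i.e.\ $\mL(\mS_X)=\mL(\mS_Z)$, and undoing the shift from Step~1 yields $\mL(\mS_X)=\mL(\mS_Z)+b$.

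The main obstacle is the key estimate in Step 2, which crucially leans on Proposition~\ref{prop:fin_var_criteria} in the infinite activity case. There is no circularity because Proposition~\ref{prop:fin_var_criteria} is established earlier directly from the a.s.\ limit $Y_t/t\to\gamma_0$ and Theorem~\ref{thm:CM_local_smoothness}; all remaining steps are soft set-theoretic manipulations.
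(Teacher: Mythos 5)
Your proof is correct and takes a genuinely different route from the paper's. The paper couples the three minorants on a single probability space via the stick-breaking representation of \cite[Thm~11]{CM_Fluctuation_Levy}: with $\zeta_n^W := W_{L_n}-W_{L_{n+1}}$ for $W\in\{X,Y,Z\}$, the additive identity $\zeta_n^X/\lambda_n = \zeta_n^Y/\lambda_n + \zeta_n^Z/\lambda_n$ holds pathwise, $\zeta_n^Y/\lambda_n\to b$ a.s. (by Proposition~\ref{prop:fin_var_criteria} and the fact that the slopes are a.s.\ distinct), and the proof then reduces to the deterministic observation that perturbing a sequence by a convergent sequence shifts its limit set. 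Your proof instead stays at the level of one-dimensional marginals: after reducing to $b=0$, you show $\int_0^1 \p(|Y_t|/t\geq\varepsilon)\,t^{-1}\D t<\infty$ (via Proposition~\ref{prop:fin_var_criteria} plus Theorem~\ref{thm:CM_local_smoothness} in the infinite-activity case, and directly in the compound Poisson case), and then transfer divergence of the integral criterion between $Z$ and $X$ through the inclusion $\{Z_t/t\in(r-\varepsilon,r+\varepsilon)\}\cap\{|Y_t|/t<\varepsilon\}\subset\{X_t/t\in(r-2\varepsilon,r+2\varepsilon)\}$ and the bound $\p(A\cap B)\geq\p(A)-\p(B^c)$, with the reverse direction obtained by writing $Z=X+(-Y)$. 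Both arguments correctly avoid any independence assumption on $Y$ and $Z$ (yours because the inclusion is pathwise and the estimate on $Y$ uses only its marginal law; the paper's because the coupling is on the same path). What your approach buys is that it avoids re-invoking the pathwise stick-breaking coupling and works entirely with the integral criterion of Theorem~\ref{thm:CM_local_smoothness}, which makes the argument self-contained once that theorem is in hand; what the paper's approach buys is a cleaner conceptual picture of the pathwise mechanism (slopes of $Y$ converging to $b$ along the stick-breaking times) and a proof that handles all $r$ simultaneously without epsilon-management. One minor imprecision you may wish to address: in the compound Poisson case you should note that after the reduction to $b=0$ the process $\tilde Y$ has no drift, so $\{|\tilde Y_t|\ge \varepsilon t\}\subset\{\tilde Y_t\neq 0\}$ does indeed have probability at most $1-e^{-\lambda t}$, which is what you use.
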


The proof of Proposition~\ref{prop:fin_var_perturbation} relies on the a.s. limit $b=\lim_{t\downarrow0}Y_t/t$ and the stick-breaking representation of the convex minorant in~\cite{CM_Fluctuation_Levy}. The main idea is that if $Z_t/t$ frequently visits any neighborhood of a point $x\in\R$ as $t\da 0$, then $X_t/t$ visits the neighborhoods of $x+b$ just as frequently. Crucially, when the visits of $Z_t/t$ to the neighborhoods of $x$ occur, $Y_t/t$ must necessarily be close to $b$ (since the limit $Y_{t_n}/t_n\to b$ holds along any random sequence of times $t_n\da 0$). 

Proposition~\ref{prop:fin_var_perturbation} and its proof may suggest that, if $Y_t/t$ and $Z_t/t$ were to visit all open intervals as $t\da 0$ with such frequency that their respective sets of slopes $\mS_Y$ and $\mS_Z$ are dense, then the same should be true for $X_t/t$. This intuition, however, turns out to be false, as Example~\ref{ex:addition2} below illustrates. Intuitively, the reason for this is that the frequent visits of $Y_t/t$ and $Z_t/t$ to such neighborhoods may be sufficiently rare so that they do not occur simultaneously with sufficient frequency even when $Y$ and $Z$ are independent.

\vspace{1mm}
\noindent \underline{Too much jump activity breaks smoothness.}
Let $\sigma\ge 0$ be the volatility of the Brownian component of $X$ and define the function $\ov\sigma^2(u)\coloneqq \int_{(-u,u)}x^2\nu(\D x)$ for $u>0$. The lower-activity index $\beta_-$ (inspired by Pruitt~\cite{MR632968}) 
and the \emph{Blumenthal--Getoor index} $\beta_+$ (introduced in~\cite{MR0123362}) are given by
\begin{equation}\label{eq:Pruitt_index}
\beta_-
\coloneqq 
\inf\Big\{p>0:\liminf_{u\downarrow 0}u^{p-2}\ov{\sigma}^2(u)=0\Big\} \quad\text{and}\quad
\beta_+\coloneqq\inf\left\{p>0\,:\,\int_{(-1,1)}|x|^p\nu(\D x)<\infty\right\},
\end{equation}
respectively. It is easy to see that the inequalities
$0\leq\beta_-\leq\beta_+\leq2$
hold. Since $\beta_-$ (resp. $\beta_+$) presents a lower (resp. upper) bound on the activity of the L\'evy measure $\nu$ at zero, in general we may have $\beta_-<\beta_+$. However, both indices agree if the tails of $\nu$ are regularly varying at $0$ (e.g. if $X_t$ is in the domain of attraction of an $\alpha$-stable law as $t\da 0$).

\begin{prop}\label{prop:index_criteria}
If $\int_1^\infty (1+u^2(\sigma^2+\ov\sigma^2(1/u)))^{-1}\D u<\infty$, then $\mL(\mS)=\emptyset$ a.s. and hence $X$ is abrupt. In particular, this is the case if $\sigma^2>0$ or $\beta_->1$.
\end{prop}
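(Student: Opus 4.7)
The plan is to leverage the equivalence derived just below~\eqref{eq:vigon_identity}: $r\in\mL(\mS)$ if and only if $\s_1\notin\Loc(r)$. Hence it suffices to show that $\s_1$ is bounded on $\R$ under the hypothesis; this immediately yields $\mL(\mS)=\emptyset$ a.s. Since Proposition~\ref{prop:fin_var_criteria} forces $\mL(\mS)=\{\gamma_0\}\neq\emptyset$ whenever $X$ has finite variation and infinite activity, the process $X$ must then have infinite variation, and by the characterisation of abruptness recalled in the discussion after Theorem~\ref{thm:CM_global_smoothness} it must be abrupt.

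For the bound on $\s_1$, I would first observe that, since $1-\Re\psi(u)>0$,
\[
\Re\frac{1}{1+iur-\psi(u)} = \frac{1-\Re\psi(u)}{(1-\Re\psi(u))^2+(ur-\Im\psi(u))^2}\leq \frac{1}{1-\Re\psi(u)},
\]
so $\s_1(r)\leq \frac{1}{2\pi}\int_\R (1-\Re\psi(u))^{-1}\D u$ uniformly in $r\in\R$. The main analytic step is then the lower bound
\[
-\Re\psi(u) = \frac{\sigma^2 u^2}{2} + \int_\R\bigl(1-\cos(ux)\bigr)\nu(\D x) \geq c_1 u^2\bigl(\sigma^2+\ov\sigma^2(1/|u|)\bigr),\qquad |u|\geq 1,
\]
obtained by applying the elementary inequality $1-\cos y\geq c_0 y^2$ on $|y|\leq 1$ to the range $|x|<1/|u|$ in the jump integral (the remaining jumps only improve the bound). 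Combining this with the trivial estimate $1-\Re\psi(u)\geq 1$ on $[-1,1]$ yields $1-\Re\psi(u)\geq c_2(1+u^2(\sigma^2+\ov\sigma^2(1/|u|)))$ for all $u\in\R$; the hypothesis together with the evenness of $\ov\sigma^2(1/|\cdot|)$ then forces $\int_\R (1-\Re\psi(u))^{-1}\D u<\infty$, so $\s_1$ is bounded on $\R$ as required.

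The two sufficient conditions are routine verifications. If $\sigma^2>0$, the integrand in the hypothesis is dominated by $(1+\sigma^2 u^2)^{-1}$, obviously integrable on $[1,\infty)$. If $\beta_->1$, pick $p\in(1,\beta_-)$; by definition of $\beta_-$ one has $\liminf_{u\da 0}u^{p-2}\ov\sigma^2(u)>0$, so $\ov\sigma^2(u)\geq c u^{2-p}$ on a right-neighbourhood of $0$, whence $u^2\ov\sigma^2(1/u)\geq c u^p$ for all sufficiently large $u$ and $\int^\infty u^{-p}\D u<\infty$ since $p>1$. The only step with genuine analytic content is the lower bound on $-\Re\psi$; everything else is bookkeeping around the equivalence $r\in\mL(\mS)\iff\s_1\notin\Loc(r)$.
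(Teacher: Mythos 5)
Your proof is correct and takes essentially the same route as the paper. You re-derive the lower bound $-\Re\psi(u)\ge c_1u^2(\sigma^2+\ov\sigma^2(1/|u|))$ inline from the estimate $1-\cos y\ge c_0 y^2$ on $|y|\le1$, whereas the paper cites this as Lemma~\ref{lem:LevyBounds}(a) (which is proved by exactly the same elementary inequality, with $c_0=1/3$), and then both proofs bound $\Re(1/(1+iur-\psi(u)))\le 1/(1-\Re\psi(u))$ to get a uniform bound on $\s_1$ and conclude $\mL(\mS)=\emptyset$ via equivalence~\eqref{eq:vigon_identity}; the two sufficient conditions are handled identically. You are slightly more explicit than the paper in spelling out why $\mL(\mS)=\emptyset$ forces infinite variation (via Proposition~\ref{prop:fin_var_criteria}) and hence abruptness, which the paper leaves implicit.
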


Proposition~\ref{prop:index_criteria} shows that a strongly eroded process necessarily satisfies $\beta_-\le 1\le \beta_+$. This is natural since, in some sense, the running supremum of the process $X$ fluctuates between the functions $t\mapsto t^{1/\beta_+}$ and $t\mapsto t^{1/\beta_-}$ as $t\da0$~\cite[Prop.~47.24]{MR3185174} and the visits of $X_t/t$ to compact intervals determine whether $X$ is strongly eroded. In other words, it is natural that strongly eroded processes require the linear map $t\mapsto t$ to lie between the functions $t\mapsto t^{1/\beta_+}$ and $t\mapsto t^{1/\beta_-}$ as $t\da 0$, which is equivalent to $\beta_-\le 1\le\beta_+$. We remark that, despite the necessary condition on the indices $\beta_-$ and $\beta_+$ allowing a strict inequality, all our examples of strongly eroded processes lie in the boundary case $\beta_-=1=\beta_+$. However, as explained in Example~\ref{ex:orey} below, Conjecture~\ref{conj:vigon} implies that a strict inequality is feasible for certain strongly eroded L\'evy processes. 

\vspace{1mm}
\noindent \underline{Too much asymmetry breaks smoothness.}
Recall that a process creeps \textit{upwards} (resp. \textit{downwards}) if $\p(T_{(x,\infty)}<\infty, X_{T_{(x,\infty)}}=x)>0$ (resp. $\p(T_{(-\infty,x)}<\infty, X_{T_{(-\infty,x)}}=x)>0$) for some $x>0$ (resp. $x<0$), where $T_A\coloneqq\inf\{t>0:X_t\in A\}$ denotes the first hitting time of set $A\subset\R$ (with convention $\inf\emptyset =\infty$), i.e., if the process crosses levels continuously with positive probability. Processes that creep (upward or downward), all of which are abrupt by~\cite[Ex.~1.5]{MR1947963}, tend to have L\'evy measures that are asymmetric on a neighborhood of $0$ (see~\cite{MR1875147} for a characterisation of such processes in terms of $\nu$). For instance, if $\sigma=0$ and $\nu$ is of infinite variation but $\int_{(0,1)}x\nu(\D x)<\infty$ (resp. $\int_{(-1,0)}|x|\nu(\D x)<\infty$), then $X$ creeps upwards (resp. downwards), see~\cite[Eq.~(1.7)]{MR321198} and the subsequent discussion. 

These facts suggest that asymmetry tends to produce abrupt processes. This heuristic is also suggested by the inequality $\Re(1/(1+iur-\psi(u)))\le 1/(1-\Re\psi(u))$, where we note that the characteristic exponent of the symmetrisation $\wh X$ of $X$ (a process with the same law as $X-Y$, where $Y$ is independent of $X$ but shares its law) equals $2\Re\psi$. In particular, under Vigon's point-hitting conjecture (see Conjecture~\ref{conj:vigon} below), \eqref{eq:vigon_identity} gives the following implications: (I) if $\wh X$ is abrupt then $X$ is abrupt and (II) if $X$ is strongly eroded then $\wh X$ is strongly eroded. We complement these observations with the following result, further supporting this heuristic. 

\begin{prop}
\label{prop:asymmetry}
Let $X$ be an infinite variation L\'evy process and suppose there exist constants $c>1$ and $x_0\in(0,\infty]$ such that $\nu([x,y))\ge c\nu((-y,-x])$ for all $0<x<y<x_0$. Then $X$ is abrupt. 
\end{prop}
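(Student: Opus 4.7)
The plan is to show that the asymmetric process satisfies $\s_1\in\Loc(r)$ for every $r\in\R$, which by~\eqref{eq:vigon_identity} and Theorem~\ref{thm:CM_local_smoothness} is equivalent to $\mL(\mS)=\emptyset$ a.s., i.e.\ $X$ is abrupt. First, by Proposition~\ref{prop:index_criteria} we may assume $\sigma=0$, since otherwise $\beta_-\geq 2>1$ and the conclusion is immediate. Next, we shrink $x_0$ if necessary so that $x_0\leq 1$. Infinite variation and $\sigma=0$ yield $\int_{(-1,1)}|x|\nu(\D x)=\infty$; the tail $\int_{x_0\leq |x|\leq 1}|x|\nu(\D x)$ is finite by the Lévy measure property, so $\int_0^{x_0}x[\nu^++\nu^-](\D x)=\infty$ (where $\nu^\pm$ denote the positive/negative parts of $\nu$ viewed as measures on $(0,\infty)$). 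The asymmetry $\nu^+\geq c\nu^-$ on $(0,x_0)$ then forces $\int_0^{x_0}x\nu^+(\D x)=\infty$.

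The first principal case is $\int_0^{x_0}x\nu^-(\D x)<\infty$. Then $\int_{(-1,0)}|x|\nu(\D x)<\infty$, so by the criterion stated after~\cite[Eq.~(1.7)]{MR321198} (quoted in the excerpt above), $X$ creeps downward. Creeping processes are abrupt by~\cite[Ex.~1.5]{MR1947963}, concluding this case.

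The harder case is $\int_0^{x_0}x\nu^-(\D x)=\infty$, where creeping is unavailable. Here I would extract the required growth directly from $\Im\psi$. Writing
\[
\Im\psi(u)=\gamma u+\int_0^{x_0}(\sin(ux)-ux)[\nu^+-\nu^-](\D x)+R(u),
\]
where $R(u)$ collects the contributions from $|x|\in[x_0,\infty)$ and $x\in(-1,-x_0]\cup(x_0,1)$ and is $O(u)$, the middle integral is non-positive (since $\sin(ux)-ux\leq 0$ for $u,x>0$ and $\nu^+-\nu^-\geq 0$ on $(0,x_0)$). Using $\nu^+-\nu^-\geq(1-1/c)\nu^+$ and the inequality $ux-\sin(ux)\geq ux/2$ for $ux\geq 2$, we obtain
\[
-\Im\psi(u)\geq \frac{(1-1/c)u}{2}\int_{2/u}^{x_0}x\nu^+(\D x)-O(u),\qquad u\to\infty,
\]
and the first term dominates since $\int_0^{x_0}x\nu^+(\D x)=\infty$. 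An analogous (symmetric) estimate holds for $u\to-\infty$, giving $|\Im\psi(u)|/|u|\to\infty$.

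Finally, to deduce $\s_1\in\Loc$ from this growth, I would apply Fubini to the representation
$\s_1(r)=\pi^{-1}\int_0^\infty\phi(u)/[\phi(u)^2+(ur-\Im\psi(u))^2]\D u$ from~\eqref{eq:s_q} (after exploiting the evenness of the integrand). For $u$ large enough that $\Im\psi(u)<ua$, the inner integral over $r\in(a,b)$ becomes the difference of two arctangents with arguments of the same sign, which by the identity $\arctan A-\arctan A'=\arctan\frac{A-A'}{1+AA'}$ and the bound $\arctan(\cdot)\leq(\cdot)$ reduces to an integral dominated by $u(b-a)\phi(u)/\bigl[\phi(u)^2+(ub-\Im\psi(u))(ua-\Im\psi(u))\bigr]$. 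Combined with the trivial bound $\phi/(\phi^2+\cdot)\leq 1/\phi$, the geometric-mean inequality yields an integrand bounded by $2(b-a)/|\Im\psi(u)|$, which is integrable at infinity under our growth estimate (the subtle point is that the analysis uses the full structure of $\Im\psi$, not just a lower bound on its rate). The contribution from small and moderate $u$ is handled by noting that the integrand is continuous and bounded on any compact subset of $(0,\infty)$ and behaves like $O(1)$ as $u\da 0$.

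The main obstacle is the delicate integral estimate in the final paragraph: translating a soft growth statement like $|\Im\psi(u)|/u\to\infty$ into an integrable bound on $\phi(u)/\Im\psi(u)^2$ genuinely requires the coupling between $\phi$ and $\Im\psi$ provided by the common underlying Lévy measure, which is not captured by either quantity alone.
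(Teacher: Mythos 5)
You correctly identify the reduction to showing $\s_1\in\Loc$, and the first case (small negative jumps of finite variation, giving downward creeping) is a clean shortcut. You also correctly establish the soft growth $|\Im\psi(u)|/|u|\to\infty$ in the harder case. But you acknowledge, and I agree, that the final estimate leaves a genuine gap: the bound you extract from the Fubini--arctan computation requires something like $\int^\infty\phi(u)\Im\psi(u)^{-2}\D u<\infty$ (or the cruder $\int^\infty|\Im\psi(u)|^{-1}\D u<\infty$), and neither of these follows from $|\Im\psi(u)|/u\to\infty$ alone: that soft statement is compatible with $|\Im\psi(u)|\sim u\log u$ and $\phi(u)\sim u$, which makes $\phi(u)/\Im\psi(u)^2$ borderline rather than automatically integrable. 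The coupling you mention is needed but is not supplied.

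The paper closes exactly this gap via a comparison argument that you do not invoke. Normalise (by Proposition~\ref{prop:fin_var_perturbation}) to $x_0=1$, $\gamma=0$, $\nu(\R\setminus(-1,1))=0$, and decompose $X=Y+Z$ with $Y$ carrying the positive jumps ($\nu|_{(0,1)}$) and $Z$ the negative ones. Then $Y$ is spectrally positive of infinite variation, creeps, and in particular is abrupt with $\s_1^Y(0)<\infty$. The asymmetry hypothesis $\nu^+\ge c\nu^-$ gives the two-sided bounds
$|\Re\psi_Y(u)|\le|\Re\psi(u)|\le(1+1/c)|\Re\psi_Y(u)|$ and $\Im\psi(-u)\ge(1-1/c)\Im\psi_Y(-u)>0$ for $u>0$.
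The first is crucial: it controls the numerator $1-\Re\psi(u)$ from above (your estimate discards this). Combined with your growth bound (to choose $M$ so that $\tfrac12(1-1/c)|\Im\psi_Y(u)|\ge R|u|$ for $|u|\ge M$), one obtains $|1+iur-\psi(u)|\ge\tfrac12(1-1/c)|1-\psi_Y(u)|$ uniformly over $|r|\le R$ and $|u|\ge M$, whence
\[
\frac{1-\Re\psi(u)}{|1+iur-\psi(u)|^2}
\le\1_{\{|u|<M\}}+\1_{\{|u|\ge M\}}\,\frac{4(1+1/c)}{(1-1/c)^{2}}\cdot\frac{1-\Re\psi_Y(u)}{|1-\psi_Y(u)|^2}.
\]
The right-hand side is independent of $r$ and integrable over $u$ (precisely because $\s_1^Y(0)<\infty$), so $\s_1(r)$ is bounded uniformly on compacts and $X$ is abrupt. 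In short: the missing ingredient in your proposal is the transfer of the known finiteness of $\s_1^Y(0)$ (from $Y$ creeping) to $\s_1(r)$ via the two-sided comparison of $\Re\psi$ and $\Re\psi_Y$ together with the one-sided comparison of $\Im\psi$ and $\Im\psi_Y$; your arctan manipulation, and especially the AM--GM step that collapses $\phi(u)/\Im\psi(u)^2$ to $1/|\Im\psi(u)|$, throws away exactly the information that makes this transfer possible.
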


We stress that the process in Proposition~\ref{prop:asymmetry} is abrupt but need \emph{not} creep. The assumption in Proposition~\ref{prop:asymmetry} requires, on a neighborhood of $0$, the restriction of $\nu$ on the negative half-line to be absolutely continuous with respect to its restriction on the positive half-line with Radon--Nikodym derivative $\varphi(x)=\nu(\D (-x))/\nu(\D x)$ bounded above by $1/c<1$ (equivalently, $\limsup_{x\da 0}\varphi(x)<1$). This condition is nearly optimal, since there exist strongly eroded processes with positive asymmetry $1-\varphi(x)$ that vanishes (arbitrarily) slowly $1-\varphi(x)\downarrow0$ as $x\da 0$, see Example~\ref{ex:low_asymmetry} below. Note that, by using Propositions~\ref{prop:asymmetry} and~\ref{prop:fin_var_perturbation} jointly, we obtain a simple recipe to construct abrupt processes with $\beta_-\le1\le\beta_+$. 

\vspace{1mm}
\noindent \underline{Sufficient conditions for $X$ to be strongly eroded (or abrupt).}
The following theorem, implied by Theorem~\ref{thm:CM_local_smoothness} above and the results in~\cite{MR1947963}, shows that most L\'evy processes of infinite variation are either strongly eroded or abrupt. Moreover, Theorem~\ref{thm:zeta_criterion} offers simple conditions to ascertain whether $X$ is strongly eroded or abrupt. The criteria are (mostly) in terms of the behaviour at infinity of the L\'evy--Khintchine exponent $\psi$ of $X$. More precisely, it is connected to the ratio $\psi(u)/u$ for large $|u|$. This ratio appears naturally since the characteristic exponent of $X_t/t$ is given by $u\mapsto t\psi(u/t)$, whose behaviour for small $t$ is described by the behaviour of $\psi(u)/u$ for large $|u|$.

Let $\Im z$ and $|z|$ denote the imaginary part and the modulus of the complex number $z\in\Complex$. Recall that $\Re\psi$ (resp. $\Im\psi$) is an even (resp. odd) function on $\R$. 

\begin{thm}
\label{thm:zeta_criterion}
Let $X$ be a L\'evy process of infinite variation with $e^{\psi(u)}=\e e^{iuX_1}$, $u\in\R$.
\begin{itemize}[leftmargin=2em, nosep]
\item[\nf(i)] If
$\limsup_{u\to\infty}|\psi(u)/u|<\infty$, then $X$ is strongly eroded.
\item[\nf(ii)] If $\lim_{u\to\infty}|\psi(u)/u|=\infty$, then $X$ is either abrupt or strongly eroded.
\begin{itemize}[leftmargin=1em, nosep]
\item[\nf{(ii-a)}] If $\lim_{|u|\to\infty}|\Re\psi(u)/u|=\infty$, then $X$ is strongly eroded if and only if $\int_1^\infty \Re(1/(1-\psi(u)))\D u=\infty$, 
\item[\nf(ii-b)] If the upper and lower limits of $|\Re\psi(u)/u|$ lie in $(0,\infty)$ and $\lim_{|u|\to\infty}|\Im\psi(u)/u|=\infty$, then $X$ is strongly eroded if and only if $\int_1^\infty u(1+|\Im\psi(u)|^2)^{-1}\D u=\infty$,
\item[\nf(ii-c)] If $\lim_{|u|\to\infty}|\Re\psi(u)/u|=0$ and $\lim_{|u|\to\infty}|\Im\psi(u)/u|=\infty$, then $X$ is strongly eroded if and only if we have $\int_1^\infty (1-\Re \psi(u))(1+|\Im\psi(u)|^2)^{-1}\D u=\infty$.
\end{itemize}
\end{itemize}
\end{thm}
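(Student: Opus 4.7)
The plan is to reduce the whole theorem to a single analytic criterion and then perform case analysis on $\psi$. Combining Theorem~\ref{thm:CM_global_smoothness} with the equivalence~\eqref{eq:vigon_identity} yields that $X$ is strongly eroded iff $\int_a^b \s_1(r)\D r=\infty$ for every bounded $a<b$, while combining~\cite[Thm~1.3]{MR1947963} with Theorem~\ref{thm:CM_local_smoothness} and~\eqref{eq:vigon_identity} yields that $X$ is abrupt iff the same integral is finite for every such $(a,b)$. Setting $\alpha(u):=1-\Re\psi(u)\geq1$ and $\beta(u):=\Im\psi(u)$, the representation $\s_1(r)=\frac{1}{2\pi}\int_\R \frac{\alpha(u)}{\alpha(u)^2+(ur-\beta(u))^2}\D u$ and Tonelli give
\begin{equation*}
\int_a^b \s_1(r)\D r=\frac{1}{2\pi}\int_\R F(u;a,b)\D u,\quad F(u;a,b):=\int_a^b \frac{\alpha(u)\,\D r}{\alpha(u)^2+(ur-\beta(u))^2},
\end{equation*}
so the theorem reduces to the tail behaviour of $F(u;a,b)$ as $|u|\to\infty$.

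For part (i), since $|\psi(u)|\le C|u|$ for $|u|$ large, the Gaussian coefficient $\sigma$ must vanish (else $\alpha(u)\ge\sigma^2 u^2/2$ would contradict the bound), so infinite variation forces $\int_{(-1,1)}|x|\nu(\D x)=\infty$. The bounds $\alpha(u)\le 1+C|u|$ and $(ur-\beta(u))^2\le(M+C)^2u^2$ for $r\in[a,b]$ with $M:=\max(|a|,|b|)$ yield $F(u;a,b)\gtrsim(b-a)\alpha(u)/u^2$ for large $|u|$. A Tonelli computation using $\int_1^\infty (1-\cos ux)u^{-2}\D u=|x|\int_{|x|}^\infty(1-\cos y)y^{-2}\D y\geq c|x|$ for $|x|\le 1$ then gives $\int_{|u|\ge 1}\alpha(u)u^{-2}\D u=\infty$, so $\int_\R F(u;a,b)\D u=\infty$ and $X$ is strongly eroded.

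For part (ii), I would first establish a uniform-in-$r$ asymptotic for $F(u;a,b)$. Under $|\psi(u)/u|\to\infty$ one has $(\alpha^2+\beta^2)/u^2\to\infty$; combined with $|\beta|\le\sqrt{\alpha^2+\beta^2}$ this gives $u^2,|u||\beta|=o(\alpha^2+\beta^2)$, so $|(ur-\beta)^2-\beta^2|\le u^2r^2+2|ur\beta|=o(\alpha^2+\beta^2)$ uniformly in $r\in[a,b]$, whence $\alpha^2+(ur-\beta)^2=(\alpha^2+\beta^2)(1+o(1))$ uniformly. Integrating in $r$ yields $F(u;a,b)=(b-a)(1+o(1))\Re(1/(1-\psi(u)))$ as $|u|\to\infty$, which immediately implies the abrupt/strongly-eroded dichotomy and proves the criterion of (ii-a). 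In subcase (ii-b) (resp.\ (ii-c)), the hypothesis $|\Im\psi(u)|/|u|\to\infty$ forces $\alpha^2+\beta^2\sim\beta^2$, so $\Re(1/(1-\psi))\asymp |u|/\beta^2$ (resp.\ $\asymp\alpha/\beta^2$), recovering the stated integral criteria.

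The hardest step is making the uniform-in-$r$ asymptotic of part (ii) precise; once this is in place, the remaining arguments are routine case analysis, relying on the infinite-variation fact $\int_{|u|\ge 1}\alpha(u)u^{-2}\D u=\infty$ for part (i) and on the trivial bound $|\beta|\le\sqrt{\alpha^2+\beta^2}$ (together with $u^2/(\alpha^2+\beta^2)\to 0$) to control the error terms in part (ii).
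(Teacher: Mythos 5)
Your proposal is correct and covers every part of the theorem. The reduction to $\int_a^b\s_1(r)\,\D r=\infty$ for all bounded $(a,b)$ (for strongly eroded) versus finite for all (for abrupt) is exactly the right starting point, and every estimate checks out: the inequality $\alpha^2+\beta^2\ge(\alpha-1)^2+\beta^2=|\psi|^2$ (valid because $\alpha\ge 1$) gives $u^2=o(\alpha^2+\beta^2)$; the bound $|u||\beta|\le|u|\sqrt{\alpha^2+\beta^2}$ then gives $|u||\beta|=o(\alpha^2+\beta^2)$; and the uniform estimate $\alpha^2+(ur-\beta)^2=(\alpha^2+\beta^2)(1+o(1))$ follows. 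The Tonelli computation in part (i) is a self-contained re-derivation of the equivalence ``$X$ of infinite variation $\iff\int_1^\infty u^{-2}|\Re\psi(u)|\,\D u=\infty$'' (Lemma~\ref{lem:VigonThesis}, whose full statement is an iff, but you only need the forward direction and prove it inline).

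The route is, however, genuinely different from the paper's in its treatment of part (ii). The paper proves the dichotomy by working pointwise with $\s_1$: it shows that if $\s_1(r_0)<\infty$ then $\s_1$ is bounded on $[r_0-R,r_0+R]$, via the elementary estimate $|1+ir_0u-\psi(u)|\le 2|1+iur-\psi(u)|$ valid once $|\psi(u)/u|$ dominates both $R$ and $|r_0|$. The dichotomy then reads: $\s_1$ is either locally bounded or identically $+\infty$. You instead integrate in $r$ first and prove the sharper uniform asymptotic $F(u;a,b)=(b-a)(1+o(1))\Re(1/(1-\psi(u)))$; the dichotomy drops out because the integrated quantity $\int_a^b\s_1$ is then asymptotically proportional to an $(a,b)$-free integral. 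Your version has the small advantage of delivering the criterion in (ii-a) already under the bare hypothesis of (ii), making the extra hypothesis $|\Re\psi(u)/u|\to\infty$ superfluous (the paper in fact also reduces all three sub-cases to $\s_1(0)<\infty$, so the additional hypotheses in (ii-a)–(ii-c) serve only to simplify the integrand, something your derivation makes especially transparent). The paper's version has the advantage of exhibiting local boundedness of $\s_1$ itself, a fact it uses elsewhere (e.g.\ in the discussion following the theorem). Both are correct; yours is a clean alternative.
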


In fact, the proof of Theorem~\ref{thm:zeta_criterion} shows that, under the assumptions of Theorem~\ref{thm:zeta_criterion}, $\s_1$ is either locally bounded (making $X$ strongly eroded) or everywhere infinite (making $X$ abrupt). Cases (i) and (ii) in Theorem~\ref{thm:zeta_criterion} are in some sense generic, but they do not exhaust the class of infinite variation L\'evy processes, see Examples~\ref{ex:fluctuation} and~\ref{ex:orey} below. 
In fact, Example~\ref{ex:fluctuation} defines a strongly eroded L\'evy process, outside of the scope of Theorem~\ref{thm:zeta_criterion}, with the characteristic exponent that fluctuates between linear and superlinear behaviour as $u\to\infty$. However, the class of processes in the union of case (i) and (ii) is closed under addition of independent summands. 
Similarly, cases (ii-a), (ii-b) and (ii-c) are not exhaustive within (ii). However, for neither case (i) nor case (ii) to hold, it is necessary that $|\psi(u)|$ fluctuate between linear (or sublinear) and superlinear functions of $|u|$. The fact that  $X$ has infinite variation if and only if $\int_1^\infty u^{-2}|\Re\psi(u)|\D u=\infty$ (by Lemma~\ref{lem:VigonThesis}, see also~\cite[Prop.~1.5.3]{vigon:tel-00567466}) shows that most processes indeed fall within either case (i) or (ii).


\vspace{1mm}
\noindent \underline{A conjectural dichotomy.}
Our results may be viewed as further evidence for Vigon's point-hitting conjecture (see Conjecture~\ref{conj:vigon} below), whose origins go back to Vigon's PhD thesis~\cite[p.~10]{vigon:tel-00567466} in 2002, which implies the following dichotomy for infinite variation L\'evy processes: 

\begin{conj}
\label{conj:dichotomy}
Any infinite variation L\'evy process is either abrupt or strongly eroded.
\end{conj}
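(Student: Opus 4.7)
The plan is to reduce Conjecture \ref{conj:dichotomy} to a pointwise dichotomy for the function $\s_1$ and then invoke Vigon's point-hitting conjecture (Conjecture \ref{conj:vigon}) to close the argument. First, by Theorem \ref{thm:CM_local_smoothness} applied to all intervals with rational endpoints, together with the equivalence \eqref{eq:vigon_identity}, the a.s. constant set of limit slopes is given deterministically by $\mL(\mS)=\{r\in\R:\s_1\notin\Loc(r)\}$. Consequently, $X$ is abrupt iff $\s_1\in\Loc(r)$ for every $r\in\R$, and strongly eroded iff $\s_1\notin\Loc(r)$ for every $r\in\R$. Setting $A:=\{r\in\R:\s_1\in\Loc(r)\}$, the task becomes showing $A\in\{\emptyset,\R\}$. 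By Fubini--Tonelli applied to the nonnegative integrand in \eqref{eq:s_q}, the set $A$ is open, so by connectedness of $\R$ it suffices to show $A$ is also closed.

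The closedness of $A$ is where Vigon's point-hitting conjecture enters. The quantity $\s_1(r)$ admits the probabilistic interpretation of the value at $0$ of the $1$-potential density of the drift-shifted L\'evy process $X^{(r)}_t:=X_t-rt$ (whenever this density exists), and its finiteness is closely tied to the property of $X^{(r)}$ hitting the point $0$ with positive probability. Conjecture \ref{conj:vigon} characterises point-hitting for infinite variation L\'evy processes in a manner that is invariant under perturbation by a linear drift, so once $X^{(r_0)}$ hits points for some $r_0\in\R$, the same conclusion should transfer to $X^{(r)}$ for every $r\in\R$. Translated back to $\s_1$ via the Vigon identity, this invariance forces $A$ to be closed---indeed, once $\s_1\in\Loc(r_0)$ for a single $r_0$, one obtains $\s_1\in\Loc(r)$ for every $r\in\R$---yielding the desired dichotomy.

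The main obstacle is the precise bridge between local integrability of $\s_1$ (an $L^1$ average condition on neighbourhoods) and the pointwise hitting property for the family $\{X^{(r)}\}_{r\in\R}$. Theorem \ref{thm:zeta_criterion} already settles the dichotomy whenever $|\psi(u)/u|$ has a definite asymptotic behaviour as $u\to\infty$, so the genuinely open cases are precisely those in which $|\psi(u)/u|$ oscillates between (sub)linear and superlinear growth along subsequences. In this oscillatory regime one must refine the analysis of the integrand $\Re(1/(1+iur-\psi(u)))$, controlling it uniformly in $r$ through a Parseval-type identification of $\s_1(r)$ with the $1$-potential measure of $X^{(r)}$ at zero, in order to upgrade a single point-hitting drift to a full neighbourhood of locally integrable drifts; this uniform control, and the corresponding strengthening of Conjecture \ref{conj:vigon} into a statement stable under translation of the drift, is the crux of the argument.
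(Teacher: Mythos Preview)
The statement you are attempting to prove is a \emph{conjecture} in the paper, and the paper does not prove it; there is no proof to compare against. What the paper does establish is exactly the reduction you outline: it shows that Conjecture~\ref{conj:vigon} (Vigon's point-hitting conjecture) implies Conjecture~\ref{conj:dichotomy}, via the reformulation of Conjecture~\ref{conj:vigon} as the equivalence of the three conditions ``$\s_1(r)<\infty$ for some $r$'', ``$\s_1(r)<\infty$ for all $r$'', and ``$\s_1$ is locally integrable on $\R$''. Your first two paragraphs reproduce this implication faithfully (though your appeal to Fubini--Tonelli for the openness of $A=\{r:\s_1\in\Loc(r)\}$ is unnecessary: openness is immediate from the definition of local integrability).

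The genuine gap, which you yourself identify in your final paragraph, is that this is not a proof of Conjecture~\ref{conj:dichotomy} but a conditional argument assuming Conjecture~\ref{conj:vigon}, which remains open. The obstruction you describe---passing from local integrability of $\s_1$ on one interval to pointwise finiteness, and then propagating this to all $r$ via uniform-in-$r$ control of $\Re(1/(1+iur-\psi(u)))$---is precisely the difficulty the paper flags and does not resolve. Theorem~\ref{thm:zeta_criterion} settles the dichotomy only when $|\psi(u)/u|$ has a definite limiting behaviour; the oscillatory case (cf.\ Example~\ref{ex:orey}, where the paper cannot determine whether $\s_1(r)$ is finite for any $r\ne 0$) remains out of reach. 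In short, your proposal matches the paper's discussion of why the conjecture is plausible and what would suffice to prove it, but it does not advance beyond that discussion: the conjecture stays open.
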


In order to understand the relationship between Conjecture~\ref{conj:dichotomy} and Vigon's point-hitting conjecture, recall first that the process $X$ hits points if for some $x\in\R$, the hitting time $T_x\coloneqq\inf\{t>0:X_t=x\}$ is finite with positive probability. If $X$ has infinite variation,~\cite[Thm~8]{MR0368175} (see also~\cite[Thm~1]{MR0272059}) implies that $\p(T_x<\infty)>0$ for some $x\in\R$ if and only if $\p(T_x<\infty)>0$ for all $x\in\R$. 

\begin{conj}[{\cite[Conject.~1.6]{VigonConjecture}}]
\label{conj:vigon}
Let $X$ be an infinite variation process and for any $r\in\R$ define the L\'evy process $X^{(r)}=(X_t-rt)_{t\ge 0}$. Then the following statements are equivalent.
\begin{itemize}[leftmargin=2em, nosep]
\item[\nf{(i)}] There exists some $r\in\R$ such that the process $X^{(r)}$ hits points.
\item[\nf{(ii)}] For all $r\in\R$ the process $X^{(r)}$ hits points.
\item[\nf{(iii)}] The process $X$ is abrupt.
\end{itemize}
\end{conj}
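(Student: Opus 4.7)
The plan is to recast the three conditions as rigidity properties of the single non-negative function $\s_1$ defined in~\eqref{eq:s_q}. By the Kesten--Bretagnolle hitting-point criterion (\cite[Thm~8]{MR0368175}, \cite[Thm~1]{MR0272059}), an infinite variation L\'evy process with characteristic exponent $\phi$ hits points if and only if $\int_\R\Re\bigl(1/(1-\phi(u))\bigr)\D u<\infty$. Applied to $X^{(r)}$, whose exponent is $\psi(u)-iur$, this becomes $\s_1(r)<\infty$. Hence (i) is equivalent to ``$\s_1(r_0)<\infty$ for some $r_0\in\R$'' and (ii) to ``$\s_1$ is finite on all of $\R$''. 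Moreover, by Theorem~\ref{thm:CM_global_smoothness} and the equivalence~\eqref{eq:vigon_identity}, $X$ is abrupt if and only if $\s_1\in\Loc(r)$ for every $r\in\R$, i.e.\ $\s_1\in L^1_{\mathrm{loc}}(\R)$. Conjecture~\ref{conj:vigon} therefore reduces to the rigidity statement that, for such a $\s_1$, the properties \emph{finite somewhere}, \emph{finite everywhere} and \emph{locally integrable on $\R$} are all equivalent.

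The implications (iii) $\Rightarrow$ (i) and (ii) $\Rightarrow$ (i) are immediate, the former because local integrability forces a.e.\ finiteness. The substantive content lies in (i) $\Rightarrow$ (ii), a propagation of pointwise finiteness across all drift perturbations, and in (ii) $\Rightarrow$ (iii), the promotion of pointwise finiteness to local integrability. Neither holds for arbitrary non-negative lower-semicontinuous functions, so the rigidity must come from the explicit representation
\begin{equation*}
\s_1(r)=\frac{1}{2\pi}\int_\R\frac{f(u)\,\D u}{f(u)^2+(g(u)-ur)^2},
\qquad f\coloneqq1-\Re\psi\ge1,\quad g\coloneqq\Im\psi,
\end{equation*}
as a Poisson-type integral in $r$ whose kernel is shaped by the L\'evy--Khintchine exponent.

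For (i) $\Rightarrow$ (ii), my plan is to fix $r_0$ with $\s_1(r_0)<\infty$ and to bound $\s_1(r)$ by splitting the integral at some $|u|=R$: on the low-frequency range $|u|\leq R$ the integrand is jointly continuous in $(u,r)$ and uniformly bounded in $r$ on any compact set, while on the high-frequency tail the convergence of the $r_0$-integral forces $f(u)^2+(g(u)-ur_0)^2$ to grow fast enough along the effective support of the integrand to absorb the linear-in-$u$ perturbation $-u(r-r_0)$. For (ii) $\Rightarrow$ (iii), the complementary probabilistic line is to interpret $\s_1(r)$ as (a multiple of) the potential density at $0$ of the $1$-killed process $X^{(r)}$ and to integrate the resolvent identity with respect to $r$, targeting an inequality of the form $\int_a^b\s_1(r)\D r\le C\sup_{r\in[a,b]}\s_1(r)$. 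The main obstacle is that the drift shift $\psi\mapsto\psi(\cdot)-iu\,\cdot$ leaves the real part $f$ fixed and only translates $g$, so one cannot use the even/odd decomposition of $\psi$ to decouple the influences of $r$ and $u$; this is precisely the oscillatory regime in which Theorem~\ref{thm:zeta_criterion} is silent, and is why the conjecture has stood open since Vigon's thesis~\cite{vigon:tel-00567466} despite the clean analytic reduction above.
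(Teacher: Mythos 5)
The statement labelled Conjecture~\ref{conj:vigon} is exactly that: an open conjecture. The paper offers no proof and states explicitly that it ``remains unsolved'' despite Vigon's extensive work; there is consequently no ``paper's own proof'' to compare your write-up against. Read honestly, your submission is not a proof either. The analytic reduction you give --- $X^{(r)}$ hits points iff $\s_1(r)<\infty$ (Kesten--Bretagnolle), $X$ abrupt iff $\s_1$ is locally integrable on $\R$ (via~\eqref{eq:vigon_identity} and~\cite[Thm~1.3]{MR1947963}), hence the conjecture is equivalent to ``$\s_1$ finite somewhere $\Leftrightarrow$ finite everywhere $\Leftrightarrow$ in $L^1_{\mathrm{loc}}$'' --- is precisely the content of the paragraph the paper places immediately after the conjecture, so on that part you and the authors agree. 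The formula $\s_1(r)=\tfrac{1}{2\pi}\int_\R f(u)\,\D u/(f(u)^2+(g(u)-ur)^2)$ with $f=1-\Re\psi\ge 1$, $g=\Im\psi$ is also correct.

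What you then offer for the two substantive implications are \emph{plans}, and you yourself explain why each one stalls; neither constitutes an argument. For (i)$\Rightarrow$(ii), the split at $|u|=R$ does not close because on the high-frequency tail the set where $g(u)-ur_0$ is small (making the $r_0$-integrand large, yet integrable) need not be the set where $g(u)-ur$ is small; the perturbation $-u(r-r_0)$ relocates the ``resonance'' in $u$, and with no lower bound on $f$ relative to $|u|$ (the regime outside the hypotheses of Theorem~\ref{thm:zeta_criterion}), one cannot rule out that $\s_1(r)=\infty$ for $r\ne r_0$. For (ii)$\Rightarrow$(iii), the targeted bound $\int_a^b\s_1\le C\sup_{[a,b]}\s_1$ presupposes the supremum is finite, but a lower semicontinuous $\s_1$ can be finite at every point of $[a,b]$ yet unbounded and non-integrable there; establishing local boundedness from pointwise finiteness is again precisely the open rigidity. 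So the write-up is a faithful status report --- the same reduction and the same diagnosis as the paper --- but a proof proposal it is not, and no referee should accept a ``plan that I explain cannot be completed'' as a proof of a conjecture.
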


By~\cite[Thm~7]{MR0368175} (see also~\cite[Thm~2]{MR0272059}), $\s_1(r)<\infty$ is equivalent to $X^{(r)}$ hitting points (recall the definition of $\s_1$ in~\eqref{eq:s_q}). Moreover, by the equivalence in~\eqref{eq:vigon_identity} and~\cite[Thm~1.3]{MR1947963}, $X$ is abrupt if and only if $\s_1$ is locally integrable on $\R$. 
Conjecture~\ref{conj:vigon} thus says that the following three statements are equivalent for any infinite variation L\'evy process $X$: (i)~$\s_1(r)<\infty$ for some $r\in\R$, (ii) $\s_1(r)<\infty$ for all $r\in\R$ and (iii) the function $\s_1$ is locally integrable on $\R$. In particular, under Conjecture~\ref{conj:vigon}, the function $\s_1$ is either everywhere infinite or locally integrable, thus implying Conjecture~\ref{conj:dichotomy} by Theorem~\ref{thm:CM_local_smoothness}, equivalence~\eqref{eq:vigon_identity} and~\cite[Thm~1.3]{MR1947963}. 

The finiteness of $\s_1(r)$, hinging entirely on the integrability at infinity of the positive bounded function $u\mapsto\Re(1/(1+iur-\psi(u)))$, becomes a focal point under Conjecture~\ref{conj:vigon}. For instance, Conjecture~\ref{conj:vigon} holds if $X$ satisfies the assumptions of either Proposition~\ref{prop:asymmetry} or Theorem~\ref{thm:zeta_criterion} (the below proofs of these results establish that $\s_1$ is locally finite on $\R$). The condition $\s_1(0)<\infty$ is equivalent to a number of probabilistic statements about the infinite variation process $X$: 
\begin{itemize}[leftmargin=2em, nosep]
\item the potential measure of $X$ is absolutely continuous with a bounded density (by~\cite[Thm~43.3]{MR3185174}),
\item the point $0$ is regular for itself for the process $X$, i.e. $\p(T_0=0)=1$ (by~\cite[Thms~7 \& 8]{MR0368175}), 
\item the process $X$ possesses a local time field (by~\cite[Thm~V.1]{MR1406564}). 
\end{itemize}
In principle, any of these properties may hold for $X$ but not for some $X^{(r)}$. Conjecture~\ref{conj:vigon}, which asserts that this is not the case,
can thus be equivalently stated by substituting ``hitting of points'' with any of the three properties in the bullet-point list. 

The structure of 
Conjecture~\ref{conj:vigon}, in terms of varying drifts, is natural as
a number of properties of infinite variation processes are known to be invariant under addition of a deterministic drift and, more generally, under a perturbation by an independent finite variation process $Y$. For instance, as Vigon shows in~\cite[Thm~Kaa]{MR1875147}, if the infinite variation process $X$ creeps in either direction, then $X+Y$ also creeps in the same direction. 
Despite Vigon's extensive body of work in the area over the years (see~\cite{vigon:tel-00567466,MR1875147,MR1947963,VigonConjecture}),
to the best of our knowledge Conjecture~\ref{conj:vigon} remains unsolved.  In Conjecture~\ref{conj:dichotomy} we offer a weaker conjectural dichotomy
and prove that, if it holds for $X$ then it holds for $X+Y$, 
where $Y$ is any finite variation process independent of $X$, see Proposition~\ref{prop:fin_var_perturbation}.
As Conjecture~\ref{conj:dichotomy} remains unsolved in spite of our efforts, in conclusion we only remark that it implies the existence of a strongly eroded L\'evy process with high activity of small jumps, i.e. Blumenthal--Getoor index arbitrarily close to two (see Example~\ref{ex:orey} below).

\subsubsection{Infinite time horizon}

Given any L\'evy process $X$ (possibly compound Poisson with drift), define the quantity $l \coloneqq \liminf_{t \to \infty }X_t/t\in [-\infty,\infty]$. The convex minorant $C_\infty$ of $X$ on the time interval $[0,\infty)$ is a.s. finite if and only if  $l\in(-\infty,\infty]$~\cite[Cor.~3]{MR2978134} (recall from Kolmogorov's zero-one law shows that the limit $l$ is a.s. constant); otherwise, $C_\infty$ equals $-\infty$ on $(0,\infty)$. 
Denote the positive (resp. negative) part of $x$ by $x^+\coloneqq \max\{x,0\}$ (resp. $x^-\coloneqq \max\{-x,0\}$). When $l\in \R$, $C_\infty$ is also piecewise linear and the slopes of the faces of $C_\infty$ lie on the interval $(-\infty,l)$. Whenever the expectation $\e X_1$ is well defined, i.e., if $\min\{\e X_1^+,\e X_1^-\}<\infty$, the strong law of large numbers~\cite[Thms~36.4 \& 36.5]{MR3185174} implies that $l=\e X_1=\lim_{t\to\infty}X_t/t$ a.s. Otherwise, we have $\e X_1^+=\e X_1^-=\infty$ and~\cite[Thm~15]{MR2320889} (see also~\cite{MR336806}) shows that $l\in \{-\infty,\infty\}$ and 
\begin{equation}
l=\infty \qquad \text{if and only if} \qquad 
\int_{(-\infty,-1)}
    \frac{|x|}{\int_{0}^{|x|}\nu([\max\{1,y\},\infty))\D y}\nu(\D x)<\infty,
\end{equation}
where we denote $x\vee y\coloneqq \max\{x,y\}$. Hence, the convex minorant and concave majorant of $X$ are both a.s. finite if and only if $\e |X_1|<\infty$, and in that case $l=\e X_1=\lim_{t\to\infty}X_t/t$ a.s. 

\begin{prop}
\label{prop:C'_infinity}
Suppose $l\in(-\infty,\infty]$, then we have $\lim_{t\to\infty}C'_\infty(t)=l$ a.s.
\end{prop}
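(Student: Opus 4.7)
\textbf{Proof plan for Proposition~\ref{prop:C'_infinity}.} The plan is to establish the identity $\lim_{t\to\infty} C_\infty'(t)=\lim_{t\to\infty} C_\infty(t)/t=l$ by sandwiching $C_\infty(t)/t$ between upper and lower bounds that both converge to $l$. Since $C_\infty$ is convex with right-derivative $C_\infty'$ which is non-decreasing, the limit $s:=\lim_{t\to\infty}C_\infty'(t)\in(-\infty,\infty]$ exists a.s. A Ces\`aro-type argument applied to the representation $C_\infty(t)=C_\infty(0)+\int_0^t C_\infty'(u)\D u$ (valid because $C_\infty$ is convex and finite a.s. when $l\in(-\infty,\infty]$) then yields $\lim_{t\to\infty}C_\infty(t)/t=s$ in $[-\infty,\infty]$. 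So it suffices to prove $s=l$.

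For the upper bound $s\le l$: from $C_\infty(t)\le X_t$ for all $t\ge0$ we get $C_\infty(t)/t\le X_t/t$, and taking $t\to\infty$ yields $s\le\liminf_{t\to\infty}X_t/t=l$. (This is also consistent with the fact, cited from~\cite[Cor.~3]{MR2978134} just before the statement, that the slopes of the faces of $C_\infty$ lie in $(-\infty,l)$.)

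For the lower bound $s\ge l$: fix any $l'\in\R$ with $l'<l$ (if $l=\infty$, fix any $l'\in\R$). Since $\liminf_{t\to\infty}X_t/t=l>l'$, we have $X_t-l't\to\infty$ a.s. as $t\to\infty$, and since $X_0=0$ and the c\`adl\`ag path of $X$ is bounded on compacts, the infimum
\[
c_{l'}:=\inf_{t\ge 0}(X_t-l't)
\]
is a.s. finite (and non-positive), and is attained on some bounded random interval. The affine function $t\mapsto l't+c_{l'}$ is a convex function lying pointwise below $X$, so by maximality of the convex minorant $C_\infty(t)\ge l't+c_{l'}$ for all $t\ge 0$. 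Dividing by $t$ and letting $t\to\infty$ yields $s\ge l'$. Letting $l'\uparrow l$ gives $s\ge l$, completing the proof.

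The main obstacle is really only a bookkeeping one, namely verifying the Ces\`aro-style step $C_\infty(t)/t\to s$ in the case $s=\infty$, which follows because for any $M$ we have $C_\infty'(u)\ge M$ for $u$ large enough, so $C_\infty(t)/t\ge M-O(1/t)$; and making sure $c_{l'}>-\infty$, which uses the c\`adl\`ag property and that $X_t-l't$ tends to $+\infty$. All other steps are routine properties of the convex minorant and its right-derivative.
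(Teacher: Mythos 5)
Your proof is correct, but it follows a genuinely different and more elementary route than the paper's. The paper's argument invokes the Poisson point process description of the faces of $C_\infty$ from~\cite[Cor.~3]{MR2978134} and then shows, via Rogozin's criterion together with the recurrence of the centred process $X^{(l)}=(X_t-lt)_{t\ge0}$ when $l<\infty$, that for every $c<l$ the number of faces with slope in $[c,l)$ is infinite; combined with the fact that $\mS_\infty\subseteq(-\infty,l)$, this yields $C'_\infty(t)\ua l$. You instead sandwich $C_\infty(t)/t$: the inequality $C_\infty\le X$ gives $\lim C_\infty(t)/t\le\liminf X_t/t=l$, while the affine function $t\mapsto l't+c_{l'}$ (with $c_{l'}=\inf_{t\ge0}(X_t-l't)>-\infty$ because $X_t-l't\to\infty$ a.s. and the path is c\`adl\`ag) lies below $X$, hence below the maximal convex minorant, giving $\lim C_\infty(t)/t\ge l'$ for every $l'<l$; the Ces\`aro step $\lim_{t\to\infty}C'_\infty(t)=\lim_{t\to\infty}C_\infty(t)/t$ then finishes the argument. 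Both approaches are sound; the paper's is aligned with the point-process machinery used throughout the paper and extracts the sharper quantitative fact that faces with slope in $[c,l)$ are a.s. infinite in number for every $c<l$ (giving $l\in\mL^-(\mS_\infty)$ directly and in a form usable in Proposition~\ref{prop:S'_infinity}), while yours needs only the maximality characterisation of the convex minorant, the hypothesis that $C_\infty$ is a.s. finite when $l\in(-\infty,\infty]$, and elementary convexity, making it shorter and more self-contained.

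One small bookkeeping point to make explicit: the Ces\`aro step uses $C_\infty(t)=C_\infty(0)+\int_0^t C'_\infty(u)\,\D u$, which requires $C_\infty(0)$ to be finite (indeed $C_\infty(0)=0$, since the affine function $t\mapsto-\ve+bt$ lies below $X$ for a suitable $b$ whenever $l>-\infty$) and that the possibly improper integral near $0$ (where $C'_\infty$ may tend to $-\infty$ for infinite-variation $X$) is still finite — both of which hold because $C_\infty$ is a.s. a finite convex function on $[0,\infty)$. This is worth a sentence but poses no real difficulty.
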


Proposition~\ref{prop:C'_infinity} implies that the set of slopes $\mS_\infty$ of the convex minorant $C_\infty$ satisfies $l\in\mL^-(\mS_\infty)$ and $\mS_\infty\subseteq(-\infty,l)$ a.s. whenever $l\in\R$. This means that $C_\infty$ becomes nearly parallel to the line $t\mapsto lt$ as $t\to\infty$; however, this does not entail any additional continuity for $C'_\infty$ (other than during its intervals of constancy) as it does not occur at a finite time. In particular, Proposition~\ref{prop:C'_infinity} shows that, if $l\in(-\infty,\infty]$, then $\mS_\infty$ is an infinite set even for compound Poisson processes.

\begin{prop}
\label{prop:S'_infinity}
Suppose $l\in(-\infty,\infty]$. Let $\mS$ be the set of slopes of the convex minorant $C$ of $X$ on the time interval $[0,1]$. Then we a.s. have the following equalities
\begin{equation}
\label{eq:S_and_S_infty}
\mL^-(\mS)\cap(-\infty,l)
=\mL^-(\mS_\infty)\cap(-\infty,l)
\quad\text{and}\quad
\mL^+(\mS)\cap(-\infty,l)
=\mL^+(\mS_\infty).
\end{equation}
\end{prop}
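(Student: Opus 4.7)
The plan is to compare $\mS$ and $\mS_\infty$ via the intermediate slope sets $\mS_T$ of the convex minorants $C_T$ on $[0,T]$, letting $T\to\infty$. Since the integral criterion~\eqref{eq:integral_slope} of Theorem~\ref{thm:CM_local_smoothness} does not depend on the time horizon $T$, we have that $\mL^\pm(\mS_T)$ is almost surely equal to the same deterministic set $\mL^\pm(\mS)=:A^\pm$ for every $T>0$.

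The geometric heart of the argument is the following comparison. Given $c<l$, let $\tau_c^*$ be the largest minimizer of $t\mapsto X_t-ct$ on $[0,\infty)$; as $\liminf_{t\to\infty}X_t/t=l>c$ forces $X_t-ct\to\infty$ a.s., we have $\tau_c^*<\infty$ a.s., and $\tau_c^*$ is the right endpoint of the last face of $C_\infty$ of slope at most $c$. For any $T\ge\tau_c^*$, the sandwich $C_\infty|_{[0,T]}\le C_T\le X|_{[0,T]}$ together with $C_\infty(\tau_c^*)=X_{\tau_c^*}$ forces $C_T(\tau_c^*)=X_{\tau_c^*}$, so $\tau_c^*$ is a common contact point of $X$ with $C_T$ and $C_\infty$, whence these minorants coincide on $[0,\tau_c^*]$. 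The last-minimizer property $X_t-ct>X_{\tau_c^*}-c\tau_c^*$ for $t>\tau_c^*$ precludes $\tau_c^*$ from lying in the interior of a face of $C_T$ (a short algebraic contradiction using the linearity of $C_T$ on such a face and the two endpoint inequalities) and forces the first face of $C_T$ immediately after $\tau_c^*$ to have slope strictly greater than $c$; the monotonicity of face slopes then yields that all slopes of $C_T$ on $(\tau_c^*,T]$ exceed $c$. Hence $\mS_T\cap(-\infty,c)=\mS_\infty\cap(-\infty,c)$ on the event $\{\tau_c^*\le T\}$.

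To conclude, I would fix a rational $c<l$ and take $T\to\infty$ along integers. On $\{\tau_c^*\le T\}$ the two slope sets agree on $(-\infty,c)$, so their accumulation points within that open interval coincide, giving $\mL^\pm(\mS_T)\cap(-\infty,c)=\mL^\pm(\mS_\infty)\cap(-\infty,c)$. Combining with $\mL^\pm(\mS_T)=A^\pm$ a.s. and letting $T\to\infty$ (so that $\p(\tau_c^*\le T)\to 1$) gives $\mL^\pm(\mS_\infty)\cap(-\infty,c)=A^\pm\cap(-\infty,c)$ almost surely. A countable union over rational $c<l$, followed by taking the union as $c\uparrow l$, upgrades this to $\mL^\pm(\mS_\infty)\cap(-\infty,l)=A^\pm\cap(-\infty,l)=\mL^\pm(\mS)\cap(-\infty,l)$ almost surely. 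The two claims in the proposition then follow: for $\mL^+$ one uses $\mL^+(\mS_\infty)\subset(-\infty,l)$ since $\mS_\infty\subset(-\infty,l)$, whereas for $\mL^-$ the intersection with $(-\infty,l)$ is needed to exclude the point $l\in\mL^-(\mS_\infty)$ furnished by Proposition~\ref{prop:C'_infinity}.

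The main obstacle I anticipate is the careful verification of the geometric step, particularly the argument that $\tau_c^*$ cannot lie in the interior of a face of $C_T$; once this is secured, the remainder is a clean combination of the local nature of accumulation and the $T$-independence of the integral criterion, with no need to confront the rate of convergence of $X_t/t$ to $l$.
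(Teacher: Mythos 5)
Your proof is correct and follows essentially the same approach as the paper's: both rest on a random time (your $\tau_c^*$ coincides a.s. with the paper's $\tau=\inf\{t>0:C'_\infty(t)>c\}$) up to which $C_\infty$ agrees with a finite-horizon convex minorant and past which all slopes of both minorants exceed $c$, combined with the $T$-invariance of the limit sets $\mL^\pm(\mS_T)$ coming from Theorem~\ref{thm:CM_local_smoothness}. The only cosmetic difference is that the paper uses a single random horizon $\tau+1$ rather than deterministic horizons $T\to\infty$ on the increasing events $\{\tau_c^*\le T\}$, and the geometric step you flag is secured exactly as you anticipate: extend $C_T|_{[0,\tau_c^*]}$ linearly with slope $c$ beyond $\tau_c^*$ (convexity follows since $C_T'(\tau_c^*-)\le C_\infty'(\tau_c^*-)\le c$, using $C_T\ge C_\infty$ with equality at $\tau_c^*$); the extension stays below $X$ on $[0,\infty)$ by the minimiser property, hence lies below $C_\infty$, forcing $C_T=C_\infty$ on $[0,\tau_c^*]$.
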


As a consequence of Propositions~\ref{prop:C'_infinity} \&~\ref{prop:S'_infinity}, the limit sets $\mL(\mS_\infty)$ and $\mL^\pm(\mS_\infty)$ are all constant a.s. The results in Subsection~\ref{subsec:fin_var_fin_time} \&~\ref{subsec:infin_var_fin_time} together with Propositions~\ref{prop:C'_infinity} \&~\ref{prop:S'_infinity} yield the following.

\begin{cor}
Suppose $X$ has finite variation and $l\in(-\infty,\infty]$. Let $I_{r,\infty}$ be the maximal open interval of constancy of $C'_\infty$ corresponding to slope $r$. Then $C'_\infty$ is discontinuous on $\bigcup_{r\in\mS_\infty}\partial I_{r,\infty}$, lower bounded and $\lim_{t\to\infty}C'_\infty(t)=l$ a.s. Moreover, the following statements hold:\\
If $X$ has finite activity, then 
\begin{itemize}[leftmargin=2em, nosep]
    \item[\nf{(i)}] $C_\infty$ has infinitely many faces with $\mL(\mS_\infty)=\emptyset$ when $l=\infty$ and otherwise $\mL^-(\mS_\infty)=\{l\}$ and $\mL^+(\mS_\infty)=\emptyset$. 
\end{itemize}
If $X$ has infinite activity, then:
\begin{itemize}[leftmargin=2em, nosep]
    \item[\nf{(ii)}] If $l\in (\gamma_0,\infty]$, then the process $t\mapsto X_t-\gamma_0t$ attains its infimum on $[0,\infty)$ at a unique time $v$ and $\mL(\mS_\infty)=\{\gamma_0,l\}$ if $l<\infty$ and otherwise $\mL(\mS_\infty)=\{\gamma_0\}$. Moreover,
    \begin{itemize}[leftmargin=2em, nosep]
    \item[{\nf(ii-a)}] if $I_+=I_-=\infty$, then $v\in(0,\infty)$ and $C'_\infty(v-)=C'_\infty(v)=\gamma_0\in\mL^-(\mS_\infty)\cap\mL^+(\mS_\infty)$ a.s.,
    \item[{\nf(ii-b)}] if $I_-<\infty$, then $v\in[0,\infty)$, $\p(v=0)\in(0,1]$, $C'_\infty(v)=\gamma_0\notin\mL^-(\mS_\infty)$ and, on the event $\{v\ne 0\}$, we have $C_\infty'(v-)<\gamma_0$ a.s.,
    \item[{\nf(ii-c)}] if $I_+<\infty$, then $v\in(0,\infty)$, $C'(v-)=\gamma_0\notin\mL^+(\mS_\infty)$ and we have $C'(v)>\gamma_0$ a.s.
    \end{itemize}
\item[\nf{(iii)}] If $l\in(-\infty,\gamma_0]$, then $\mL^-(\mS_\infty)=\{l\}$ and $\mL^+(\mS_\infty)=\emptyset$.
\end{itemize}
\end{cor}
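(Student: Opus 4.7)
The plan is to combine the finite-horizon description of Proposition~\ref{prop:fin_var_criteria} with the asymptotic results of Propositions~\ref{prop:C'_infinity} and~\ref{prop:S'_infinity}, via a \emph{localization observation}: whenever $C_\infty$ has a face with slope $s$ on a subinterval $[a,b]\subset[0,T]$, then the restriction $C|_{[0,T]}$ has the same face on $[a,b]$, so $s\in\mS$. This follows by comparing supporting lines---convexity together with $C(a)\leq X_a=C_\infty(a)$ and $C(b)\leq X_b=C_\infty(b)$ forces $C|_{[a,b]}=C_\infty|_{[a,b]}$. The general claims then follow quickly: $C'_\infty$ is a non-decreasing right-continuous piecewise constant function whose set of discontinuities is contained in $\bigcup_{r\in\mS_\infty}\partial I_{r,\infty}$; lower-boundedness of $C'_\infty$ reduces via the localization observation to lower-boundedness of $\mS$ on $[0,1]$ (Proposition~\ref{prop:fin_var_criteria}); and $\lim_{t\to\infty}C'_\infty(t)=l$ is Proposition~\ref{prop:C'_infinity}. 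Case~(i) is then immediate: $\mS_\infty\subset(-\infty,l)$ and $C'_\infty(t)\uparrow l$, so slopes either diverge ($l=\infty$, giving $\mL(\mS_\infty)=\emptyset$) or accumulate only at $l$ from below ($l<\infty$, giving $\mL^-(\mS_\infty)=\{l\}$ and $\mL^+(\mS_\infty)=\emptyset$).

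For case~(ii), set $Y_t:=X_t-\gamma_0 t$. Then $Y_t/t\to l-\gamma_0>0$ (or $\infty$), so $Y_t\to\infty$ and $Y$ attains its infimum on $[0,\infty)$ at some a.s.~finite time $v$; uniqueness of $v$ follows from the diffuseness of $Y$'s increments via Doeblin's lemma, exactly as in Proposition~\ref{prop:fin_var_criteria}. Choose a (random) $T>v$ with $Y_T>Y_v$; then $v$ is also the unique minimum of $Y$ on $[0,T]$, and by the localization observation $C_\infty$ and $C|_{[0,T]}$ coincide on a neighborhood of $v$. Hence the description of $C'$ at $v$ from Proposition~\ref{prop:fin_var_criteria}(a)--(c) transfers verbatim to $C'_\infty$, yielding (ii-a)--(ii-c). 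The accumulation set then follows from Proposition~\ref{prop:S'_infinity}: since $\gamma_0<l$ and $\mL(\mS)=\{\gamma_0\}$, we get $\mL^\pm(\mS_\infty)\cap(-\infty,l)=\mL^\pm(\mS)\cap(-\infty,l)\subseteq\{\gamma_0\}$; combining this with the extra accumulation $l\in\mL^-(\mS_\infty)$ (when $l<\infty$) from Proposition~\ref{prop:C'_infinity} gives $\mL(\mS_\infty)=\{\gamma_0,l\}$, while $l=\infty$ gives $\mL(\mS_\infty)=\{\gamma_0\}$.

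Case~(iii) is similar but simpler: $Y_t/t\to l-\gamma_0\leq 0$ and $Y$ has infinite activity, so the L\'evy trichotomy forces $\liminf_{t\to\infty}Y_t=-\infty$, hence no finite global minimum. Since $\gamma_0\geq l$ and $\mL(\mS)=\{\gamma_0\}$, Proposition~\ref{prop:S'_infinity} yields $\mL^\pm(\mS_\infty)\cap(-\infty,l)=\emptyset$, while $\mS_\infty\subset(-\infty,l)$ and Proposition~\ref{prop:C'_infinity} give $\mL^-(\mS_\infty)=\{l\}$ and $\mL^+(\mS_\infty)=\emptyset$. The \emph{main obstacle} is the localization argument in case~(ii)---carefully verifying that the face(s) of $C_\infty$ adjacent to $v$ are already present in $C|_{[0,T]}$ for any large enough random $T$---since this is what bridges Proposition~\ref{prop:fin_var_criteria} with the infinite-horizon setup; once this is in place, the rest is essentially bookkeeping using Propositions~\ref{prop:C'_infinity} and~\ref{prop:S'_infinity}.
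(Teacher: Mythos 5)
Your approach is the one the paper intends: the corollary is stated as a consequence of Proposition~\ref{prop:fin_var_criteria} and Propositions~\ref{prop:C'_infinity}--\ref{prop:S'_infinity}, and your ``localization observation'' (faces of $C_\infty$ contained in a bounded interval $[a,b]\subset[0,T]$ are faces of $C|_{[0,T]}$, proved by squeezing between $C_\infty\le C$ and the chord) is exactly the right bridge. The argument is essentially correct, but two steps need tightening.

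First, the discontinuity claim is stated backwards. You observe that the set of discontinuities of $C'_\infty$ is \emph{contained in} $\bigcup_{r\in\mS_\infty}\partial I_{r,\infty}$, but the corollary asserts the reverse: that $C'_\infty$ is discontinuous \emph{at every} point of $\bigcup_{r\in\mS_\infty}\partial I_{r,\infty}$. By Table~\ref{tab:dictionary_C'} this can fail at a boundary point $v\in\partial I_{r,\infty}$ if the corresponding slope lies in $\mL^+(\mS_\infty)\cap\mS_\infty$ or $\mL^-(\mS_\infty)\cap\mS_\infty$. What is needed, and what your later analysis does supply but does not connect, is that $\mL(\mS_\infty)\cap\mS_\infty=\emptyset$. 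Indeed $\mL(\mS_\infty)\subseteq\{\gamma_0,l\}$, $l\notin\mS_\infty$ because $\mS_\infty\subset(-\infty,l)$, and $\gamma_0\notin\mS_\infty$ because every face of $C_\infty$ is bounded (by Proposition~\ref{prop:C'_infinity}, $C'_\infty\to l$ strictly from below, so there is no final infinite face), hence by your localization its slope lies in some $\mS|_{[0,T]}$, and $\gamma_0\notin\mS|_{[0,T]}$ by Proposition~\ref{prop:fin_var_criteria}. You should state this explicitly; it also subsumes the lower-boundedness claim, which your invocation of the localization does not quite reach as written, since the first face of $C_\infty$ need not a priori be the first face of $C|_{[0,T]}$ (it is, once you know it is bounded).

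Second, in case (ii) you apply Proposition~\ref{prop:fin_var_criteria} with a random horizon $T>v$. That proposition is stated for deterministic $T$, so the transfer requires a short additional argument: apply it with $T=n$ for each $n\in\N$, intersect the a.s.\ conclusions with the events $A_n:=\{v<n,\ Y_n>Y_v\}$, and observe that $\p(\bigcup_n A_n)=1$ because $Y_t\to\infty$ when $l>\gamma_0$. With this in place the pointwise description of $C'_\infty$ near $v$ transfers as you claim, and the remaining bookkeeping via Propositions~\ref{prop:C'_infinity} and~\ref{prop:S'_infinity} is sound, including the case (iii) observation that recurrence of $Y$ (when $\e Y_1=0$) or $Y_t\to-\infty$ (when $l<\gamma_0$) rules out a finite global minimum.
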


\begin{cor}
Assume that $X$ has infinite variation and $l\in(-\infty,\infty]$, then $\inf \mS_\infty=-\infty$ a.s. Moreover, $C_\infty'$ is continuous if and only if $\int_0^1 t^{-1} \p(X_t/t\in (a,b))\D t=\infty$ for all $a<b<l$.
\end{cor}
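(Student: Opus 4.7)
The plan is to prove the two assertions separately, reducing both times to the finite-horizon statements already established via Propositions~\ref{prop:C'_infinity} and~\ref{prop:S'_infinity}.

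For $\inf\mS_\infty=-\infty$ a.s., I would invoke Rogozin's theorem (as in Section~\ref{subsec:infin_var_fin_time}) which, since $X$ has infinite variation, gives $\liminf_{t\downarrow 0}X_t/t=-\infty$ a.s. The convex minorant $C_\infty$ satisfies $C_\infty(0)=0$ and $C_\infty(t)\le X_t$ for all $t\ge 0$, and is convex. If $C_\infty'(0+)=m>-\infty$ held, the convexity tangent inequality $mt\le C_\infty(t)$ would yield $m\le X_t/t$ for all $t>0$, contradicting $\liminf_{t\downarrow 0} X_t/t=-\infty$. Hence $C_\infty'(0+)=-\infty$ a.s., which forces the slopes $\mS_\infty$ taken by $C_\infty'$ on its maximal intervals of constancy near $0$ to be unbounded below.

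For the continuity criterion, I would first note that $C_\infty'$ is non-decreasing and right-continuous with $\lim_{t\downarrow 0}C_\infty'(t)=-\infty$ (just proved) and $\lim_{t\to\infty}C_\infty'(t)=l$ (Proposition~\ref{prop:C'_infinity}). Applying the classification of discontinuities of the right-derivative of a piecewise-linear convex function in Appendix~\ref{sec:pw-cf} (Table~\ref{tab:dictionary_C'}) to $C_\infty$ on any bounded subinterval of $(0,\infty)$, $C_\infty'$ is continuous on $(0,\infty)$ if and only if every point of the open image $(-\infty,l)$ of $C_\infty'$ is a two-sided accumulation point of $\mS_\infty$, i.e.
\[
(-\infty,l)\subseteq\mL^-(\mS_\infty)\cap\mL^+(\mS_\infty).
\]
By Proposition~\ref{prop:S'_infinity}, the right-hand side equals $\mL^-(\mS)\cap\mL^+(\mS)\cap(-\infty,l)$ a.s., so the condition becomes $(-\infty,l)\subseteq\mL^-(\mS)\cap\mL^+(\mS)$. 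Since $(-\infty,l)$ is open, this is equivalent to $|\mS\cap(a,b)|=\infty$ a.s.\ for every $a<b<l$ (each subinterval of $(-\infty,l)$ must contain infinitely many slopes in order that both of its endpoints be two-sided accumulation points), which by Theorem~\ref{thm:CM_local_smoothness} is in turn equivalent to $\int_0^1 t^{-1}\p(X_t/t\in(a,b))\,\D t=\infty$ for all such $a<b$.

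The main obstacle will be the first step of the second part, namely translating the analytic statement ``$C_\infty'$ is continuous on $(0,\infty)$'' into the combinatorial condition $(-\infty,l)\subseteq\mL^-(\mS_\infty)\cap\mL^+(\mS_\infty)$. This rests on the appendix classification of how discontinuities of the right-derivative correspond to isolated points and one-sided gaps in the slope set, combined with the fact that Proposition~\ref{prop:C'_infinity} together with the first part pins down the image of $C_\infty'$ precisely as $(-\infty,l)$.
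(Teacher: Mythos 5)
Your proof is correct and follows the approach the paper clearly intends for this corollary (which is stated without proof as a consequence of the preceding results): the first part is the Rogozin-theorem tangent argument already used in Subsection~\ref{subsec:infin_var_fin_time} for the finite-horizon claim $\inf\mS=-\infty$, and the second part reduces $C_\infty'$ to the finite-horizon set of slopes $\mS$ via Propositions~\ref{prop:C'_infinity}, \ref{prop:S'_infinity}, the Appendix~\ref{sec:pw-cf} classification, and Theorem~\ref{thm:CM_local_smoothness}. The only minor gap is that you implicitly use $C_\infty(0)=0$ (needed for the tangent inequality); this follows, e.g., from the local identification $C_\infty=\wt C$ on $[0,\tau]$ in the proof of Proposition~\ref{prop:S'_infinity}, and similarly your phrasing ``the open image $(-\infty,l)$ of $C_\infty'$'' presupposes continuity, but your if-and-only-if chain is nonetheless valid in both directions.
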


Again, under Conjecture~\ref{conj:dichotomy}, the Lebesgue--Stieltjes measure $\D C'_\infty$ is either purely atomic or purely singular continuous. 

\subsection{Related literature}

The smoothness of the convex hull of planar Brownian motion goes back to Paul L\'evy~\cite{MR0029120}. In~\cite{MR972777}, the authors establish lower bounds on the modulus of continuity of the derivative of the boundary of the convex hull (see~\cite{MR972777} and the references therein). 
These results all concern the spatial convex hull of Brownian motion while we consider the time-space convex hull of a real-valued L\'evy process $X$, i.e. the convex hull of $t\mapsto (t,X_t)$. However, in our context it is also natural to enquire about the modulus of continuity of the convex minorant of $X$, a topic that will be addressed in future work.

In~\cite{MR1747095}, Bertoin describes the law of the convex minorant of Cauchy process in terms of a gamma process, establishing the continuity of its derivative. The result relies on an explicit description of the right-continuous inverse of the slope process of Cauchy process (see~\cite{MR714964,CM_Fluctuation_Levy} and~\cite[Ch.~XI]{MR1739699} for similar characterisations for other L\'evy processes).  
Our approach is instead based on the stick-breaking representation of the convex minorant of L\'evy process first established in~\cite{MR2978134} (see also~\cite{MR2831081,MR2948693,CM_Fluctuation_Levy}). This is an important stepping stone for our results in Section~\ref{sec:0-1lawsbres} below.

The abruptness of a L\'evy process $X$ is closely connected via~\eqref{eq:integral_slope} to the properties of the contact set between $X$ and its $\alpha$-Lipschitz minorant (the largest Lipschitz function with derivative equal to $\pm \alpha$ a.e.) or between $X$ and its convex minorant. This connection also has a geometric interpretation. By~\cite[Thm~3.8]{MR3176366}, the subordinator associated to the contact set between the process $X$ and its $\alpha$-Lipschitz minorant has infinite activity if and only if~\eqref{eq:integral_slope} holds for the interval $I=[-\alpha,\alpha]$. By Theorem~\ref{thm:CM_local_smoothness} this subordinator has infinite activity if and only if $C$ has infinitely many faces whose slope lies on $[-\alpha,\alpha]$. When this occurs, the L\'evy process remains close to the $\alpha$-Lipschitz minorant after touching it~\cite[Rem.~4.4]{MR3176366}. We also observe this behaviour at every contact point between the L\'evy process and its convex minorant when the latter is continuously differentiable. In contrast, an abrupt L\'evy process must leave its convex minorant sharply after every contact point in the same way it leaves its running supremum~\cite{MR1875147} (see also~\cite{VigonConjecture}). This strengthens the contrasting behaviour between abrupt processes and strongly eroded processes, cf. the conjectural dichotomy in Subsection~\ref{subsec:infin_var_fin_time} above. 

\subsection{Organisation of the paper.}

The remainder of the paper is organised as follows. 
Section~\ref{sec:examples&specialcases} illustrates the breath of the class of strongly eroded L\'evy processes. In particular, it shows that even within the class of L\'evy processes with regularly varying L\'evy measure at zero, a wide variety of behaviours is possible.
Section~\ref{sec:0-1lawsbres} introduces and establishes a zero-one law for the stick-breaking process (see Theorem~\ref{thm:0-1_law_formula} below), which implies Theorem~\ref{thm:CM_local_smoothness}. Theorem~\ref{thm:CM_global_smoothness} and all other results of Section~\ref{sec:introduction} are established in Section~\ref{sec:proofssection}. In Section~\ref{sec:comments} we describe informally, in terms of the path behaviour of the process $X$ as it leaves $0$, what appears to be the main stumbling block for establishing the dichotomy in Conjecture~\ref{conj:dichotomy} and, more strongly, Vigon's point-hitting conjecture (see Conjecture~\ref{conj:vigon} above). Appendix~\ref{sec:pw-cf} describes the analytical behaviour of an arbitrary piecewise linear convex function and its right-derivative. Appendix~\ref{sec:s_q} establishes a formula for the integral of $\s_1(r)$ in terms of the law of the process $X$, which implies the equivalence in~\eqref{eq:vigon_identity} above.

\section{Is an infinite variation L\'evy process strongly eroded? Examples and counterexamples}
\label{sec:examples&specialcases}

The class of strongly eroded L\'evy processes has a delicate structure, depending crucially on the fine behaviour of the L\'evy measure at zero. 
In this section we present evidence for the following principles for constructing strongly eroded L\'evy processes,  as well as study their limitations. Heuristically, the boundary of the convex hull of the path of an infinite variation L\'evy process $X$ becomes smoother as:
\begin{itemize}
    \item[(I)]  the jump activity decreases (cf. Example~\ref{ex:symm_low_activity});
    \item[(II)] the small jumps become more symmetric (cf. Examples~\ref{ex:symm_low_activity} and~\ref{ex:low_asymmetry});
    \item[(III)] at zero, the L\'evy measure ``approaches'' that of a Cauchy process (cf. Example~\ref{ex:Cauchy-normal-attraction}).
\end{itemize}
However, as we shall see from the examples below, the following features are also demonstrated: (I) a decrease of the straightforward measure of the jump activity, such as the Blumenthal--Getoor index, appears not to be sufficient for $X$ to become strongly eroded, cf. Example~\ref{ex:orey}; (II) there exist both asymmetric strongly eroded processes with one of the tails of the L\'evy measure at zero dominating (i.e.   $|\Im\psi(u)|/\Re\psi(u)\to\infty$ as $|u|\to\infty$, where $\psi$ is the characteristic exponent of $X$), cf. Example~\ref{ex:low_asymmetry}, and symmetric abrupt processes; (III) there exist abrupt processes attracted to Cauchy process, cf. Example~\ref{ex:Cauchy-non-normal-attraction}. We further show that the classes of abrupt processes (i.e., with $\mL(\mS)=\emptyset$) and strongly eroded processes (i.e., with $\mL(\mS)=\R$) are \emph{not} closed under addition of independent summands. Moreover, the sum of a strongly eroded and an independent abrupt process may be either abrupt or strongly eroded, cf. Examples~\ref{ex:addition2} and~\ref{ex:low_asymmetry}. In addition, a subordinated abrupt process of infinite variation may be either abrupt or strongly eroded, while a subordinated strongly eroded process may (but need not) be strongly eroded, cf. Example~\ref{ex:subordination}.

\subsection{Near-Cauchy processes}

We begin with processes attracted to Cauchy process. Already in this class, we will see how easily a minor change in the jump activity of a process turn a strongly eroded process into an abrupt one. In particular, it is clear that information that does not capture the asymmetry of $\psi$ (such as the indices $\beta_+$ and $\beta_-$ defined in~\eqref{eq:Pruitt_index}) will have limitations in determining whether $X$ is strongly eroded or abrupt. 

\begin{ex}[Domain of normal attraction to Cauchy process]
\label{ex:Cauchy-normal-attraction}
Suppose $X_t/t$ converges weakly as $t\downarrow0$ to a Cauchy random variable $S$ with density proportional to $x\mapsto1/(\lambda_1^2+(x-\lambda_2)^2)$ on $\R$ for some $\lambda_1>0$ and $\lambda_2\in\R$. Then $X$ is strongly eroded. Indeed, $\lim_{t\downarrow0}\p(X_t/t\in (a,b))= \p(S\in(a,b))>0$ for any $a<b$ by assumption, so Theorem~\ref{thm:CM_local_smoothness} immediately gives the claim. Such an assumption is satisfied if, for instance, $\nu([x,\infty))x\to c$, $\nu((-\infty,-x])x\to c$ and $\int_{(-1,-x]\cup[x,1)}y\nu(\D y)\to c'$ as $x\downarrow0$ for some $c>0$ and $c'\in\R$ by~\cite[Thm~2]{MR3784492}. 
\end{ex}

\begin{ex}[Domain of non-normal attraction to Cauchy process]
\label{ex:Cauchy-non-normal-attraction}
Assume the characteristic exponent is given by $\psi(u)=iu\gamma +\int_{\R}(e^{iux}-1-iux\1_{\{|x|<1\}})\nu(\D x)$ for $u\in\R$ where $\nu$ is symmetric with $\nu([x,\infty))=x^{-1}\rho(x)$ for a slowly varying function $\rho$ at $0$ with $\lim_{x \downarrow 0}\rho(x)=\infty$. 
By~\cite[Thm~2(iii)]{MR3784492}, the limit $(\gamma-\int_{\{x\leq |y|<1\}}y\nu(\D y))/(x\nu([x,\infty)))=\gamma/\rho(x)\to 0$ as $x\da 0$ implies that $X_t/h(t)$ converges weakly as $t\downarrow 0$ to a Cauchy random variable $S$ for an appropriate function~$h$ (given in terms of the de Bruijn inverse of $\rho$) with non-constant ratio $h(t)/t$ that is slowly varying at $0$. However, $X$ may be abrupt or strongly eroded. In fact, $\Im\psi(u)=\gamma u$ and $\Re\psi(u)$ is bounded between multiples of $|u|\rho(1/|u|)$ as $|u|\to\infty$ by Lemma~\ref{lem:karamata_Levy} below. Hence~\eqref{eq:vigon_identity} and Theorem~\ref{thm:CM_local_smoothness} show that $X$ is abrupt (resp. strongly eroded) if $\int_0^1(x\rho(x))^{-1}\D x$ is finite (resp. infinite). Intuitively, as shown by the following examples, a sufficiently large $\rho$ may make $X$ sufficiently different from Cauchy process, resulting in an abrupt process $X$. Pick $\rho(x)\coloneqq\log(1/x)^2\1_{(0,1/2)}(x)$, then $X$ has infinite variation (since $\int_0^1 x^{-1}\rho(x)\D x=\infty$) and is abrupt since $\int_0^1(x\rho(x))^{-1}\D x=1/\log(2)<\infty$. If instead $\rho(x)\coloneqq\log(1/x)\1_{(0,1/2)}(x)$, then $X$ is strongly eroded as $\int_0^1(x\rho(x))^{-1}\D x=\infty$. 
\end{ex}

Next we consider $1$-semistable and weakly $1$-stable processes, both of which have relatively simple characteristic exponents.

\begin{ex}[Weakly stable processes]
Let $X$ be a (possibly weakly) $1$-stable process, i.e., with L\'evy measure $\nu(\D x)=|x|^{-2}(c_+\1_{(0,\infty)}(x)+c_-\1_{(-\infty,0)}(x))\D x$ for some $c_\pm\ge 0$ with $c_++c_->0$. If $c_+=c_-$, then $X$ is Cauchy (strictly $1$-stable), has infinite variation and is strongly eroded by Theorem~\ref{thm:zeta_criterion}(i). If $c_+\neq c_-$, then $X$ is weakly $1$-stable, has infinite variation, is not attracted to a Cauchy process as $t\downarrow0$ (see~\cite[Ex.~4.2.2]{MR3784492}) and is abrupt by Proposition~\ref{prop:asymmetry}. Since the symmetrisation of a weakly $1$-stable process is Cauchy, the class of abrupt L\'evy processes is not closed under addition even within the class of weakly stable processes.
\end{ex}

\begin{ex}[Semi-stable processes]
Let $X$ be a $1$-semi-stable process (see~\cite[Def.~13.2]{MR3185174} for definition). Then $X$ has infinite variation and there exists a positive constant $b>1$ such that the L\'evy measure $\nu$ is uniquely defined as a periodic extension of its restriction to $(-b,b)\setminus(-1,1)$. Moreover, by~\cite[Thm~7.4]{MR1819201}, if $X$ is strictly $1$-semi-stable (i.e. if $\int_{(-b,b)\setminus(-1,1)}x\nu(\D x)=0$), then $\s_1(r)=\infty$ for all $r\in\R$, in which case $X$ is strongly eroded by Theorem~\ref{thm:CM_local_smoothness} and~\eqref{eq:vigon_identity}. Otherwise (i.e., if $X$ is not strictly $1$-semi-stable), then $\s_1$ is locally bounded by~\cite[Thm~7.4]{MR1819201}, making $X$ abrupt. In both cases, the tails of the L\'evy measure of $X$ need not be regularly varying at $0$. In particular, this gives examples of strongly eroded processes with possibly asymmetric L\'evy measures that are not regularly varying at $0$. However, all these examples have $\beta_-=\beta_+=1$.
\end{ex}

\subsection{Oscilating characteristic exponent}

The fact that abrupt processes are not closed under addition is obvious since any strongly eroded process is the sum of two spectrally one-sided processes, both of which creep and are thus abrupt. In contrast, proving that strongly eroded processes are not closed under addition requires us to look at processes with oscillating characteristic function in the sense that $|\psi(u)/u|$ as a finite lower limit and an infinite upper limit as $|u|\to\infty$. 

\begin{ex}[Strongly eroded with mild oscillation]
\label{ex:fluctuation}
Define the function $\rho:(0,e^{-e})\to(0,\infty)$ given by $\rho(1/x)\coloneqq (1+\sin(\log\log x)^2\cdot\log x\cdot (\log\log x)^2)\1_{(e^e,\infty)}(x)$. We claim that $\rho$ is slowly varying at~$0$. By Karamata's representation theorem~\cite[Thm~1.3.1]{MR1015093}, it suffices to show that $h(x)\coloneqq\log\rho(e^{-x})$ (i.e. $\rho(x)=e^{h(\log(1/x))}$) satisfies $h'(x) \to 0$ as $x \to \infty$. This is easy to see in our case since we have $h(x)=\log(1+\sin(\log x)^2\cdot x\cdot(\log x)^2)$, establishing the slow variation of $\rho$. 


Let $X$ be symmetric with $\nu(\D x)/\D x=x^{-2}\rho(x)\1_{(0,e^{-e})}(x)$ and $\sigma=0$, implying that $\Im\psi(u)=0$ and $|\Re\psi(u)|$ is bounded between multiples of $|u|\rho(1/|u|)$ as $|u|\to\infty$ (see Lemma~\ref{lem:karamata_Levy} below). Thus, for some $c>0$ and all sufficiently large $|u|$, we have
\[
\Re\frac{1}{1-iur-\psi(u)}
=\frac{1-\Re\psi(u)}{u^2r^2 + (1-\Re\psi(u))^2}
\ge\frac{c}{|u|(1+\sin(\log\log|u|)^2\cdot\log|u|\cdot(\log\log|u|)^2)}.
\]
We claim that $X$ is strongly eroded. To see this, note that $\sin(k\pi+u)^2\le u^2$ for any $k\in\N$ and $u\in\R$, implying that $\sin(\log\log u)^2\cdot(\log\log u)^2\le 1$ for $u\in[\exp(e^{k\pi-1/(k\pi)}),\exp(e^{k\pi})]$. Hence, we have
\[
\int_{e^e}^\infty\frac{\D u}{u(1+\sin(\log\log u)^2\cdot\log u\cdot(\log\log u)^2)}
\ge\sum_{k=1}^\infty
    \int_{\exp(e^{k\pi-1/(k\pi)})}^{\exp(e^{k\pi})}\frac{\D u}{2u\log u}\\
=\sum_{k=1}^\infty\frac{1}{2k\pi}=\infty,
\]
proving that $X$ is strongly eroded. Note that Theorem~\ref{thm:zeta_criterion} is inapplicable as $\liminf_{u\to\infty}|\psi(u)/u|<\infty$ and $\limsup_{u\to\infty}|\psi(u)/u|=\infty$.
\end{ex}

\begin{ex}[Eroded processes are not closed under addition]
\label{ex:addition2}
Define the function  $\varrho:(0,e^{-e})\to(0,\infty)$ given by $\varrho(1/x)=(1+\cos(\log\log x)^2\cdot\log x\cdot(\log\log x)^2)\1_{(0,e^{e})}(x)$. A similar argument to the one made in Example~\ref{ex:fluctuation} above shows $\varrho$ is slowly varying at $0$ and yields another strongly eroded process. However, the sum $X$ of such a process and the one from Example~\ref{ex:fluctuation} above is symmetric and with L\'evy measure $\nu$ satisfying $\nu([x,\infty))=x^{-1}(2+\log(1/x)(\log\log(1/x))^2)\1_{(0,e^{-e})}(x)$. Lemma~\ref{lem:karamata_Levy} then implies that for some $c>0$ and all sufficiently large $|u|$ we have
\[
\Re\frac{1}{1-iur-\psi(u)}
\le \frac{1}{1-\Re\psi(u)}
\le \frac{c}{|u|\cdot\log|u|\cdot(\log\log|u|)^2}.
\]
Since $\int_{e^e}^\infty(u\cdot\log u\cdot(\log\log u)^2)^{-1}\D u=1<\infty$, the process $X$ is abrupt by Theorem~\ref{thm:CM_local_smoothness} and~\eqref{eq:vigon_identity}.
\end{ex}

The fluctuations present in the previous examples are tame enough for us to determine decisively that $\s_1$ is identically infinite and hence not locally integrable. In the following example we find a symmetric process for which we may show that $\s_1(0)=\infty$ but for which, as a consequence of the heavy oscillations of its characteristic function, it is incredibly hard to find whether $\s_1(r)$ is finite or not for \emph{any} given $r\ne 0$. The oscillations of the characteristic exponent in particular satisfy both $\liminf_{|u|\to\infty}|\psi(u)|=0$ (hence $\beta_-=0$) and $\limsup_{|u|\to\infty}|\psi(u)|/|u|^\alpha\ge 4$ for a constant $\alpha\in(1,2)$ (which in fact agrees with $\beta_+$) that may be taken arbitrarily close to $2$. In particular, this symmetric process is a prime candidate for one of the two interesting possibilities: (I) a counter-example to Conjecture~\ref{conj:vigon}, as $\s_1(0)=\infty$ but $\s_1(r)$ is possibly finite for some $r\ne 0$ (which may possibly result in a non-eroded, non-abrupt process) or (II) a strongly eroded process with path variation $\beta_+=\alpha\in(1,2)$ arbitrarily close to that of a Brownian motion. When the process is asymmetric, however, it is abrupt.

\begin{ex}[Can an eroded L\'evy process have path variation close to that of Brownian motion?]
\label{ex:orey}
We recall the definition of Orey's process~{\cite{MR226701}}.
Fix any $\alpha\in(1,2)$, $c_\pm\ge0$ with $c_++c_->0$ and integer $\eta>2/(2-\alpha)$. Set $\sigma=0$, $\gamma=0$ and $\nu=\sum_{n\in\N}a_n^{-\alpha}(c_+\delta_{a_n}+c_-\delta_{-a_n})$ for $a_n=2^{-\eta^n}$. Then we have $\int_{(-1,1)} |x|\nu(\D x)=(c_++c_-)\sum_{n\in\N}a_n^{1-\alpha}=\infty$, making the associated L\'evy process of infinite variation. Since $\cos(2k\pi)=1$ for every integer $k$ and $1-\cos(x)\le x^2$ for $x\in[0,1]$, 
\begin{align*}
-\Re\psi(2\pi/a_n)
&=(c_++c_-)\sum_{k=1}^\infty a_k^{-\alpha}(1-\cos(2\pi a_k/a_n))
=(c_++c_-)\sum_{k=1}^\infty 2^{\alpha\eta^k}\big(1-\cos\big(2\pi 2^{\eta^n-\eta^k}\big)\big)\\
&\le 4(c_++c_-)\pi^2\sum_{k=n+1}^\infty 2^{2\eta^n-(2-\alpha)\eta^k}
=4(c_++c_-)\pi^2\sum_{k=1}^\infty 2^{-\eta^n((2-\alpha)\eta^k-2)}
\xrightarrow[n\to\infty]{} 0,
\end{align*}
where the limit follows by the monotone convergence theorem and the inequality $\eta>2/(2-\alpha)$. Thus, $\liminf_{u\to\infty}|\Re\psi(u)|=0$. Similarly,
$-a_n^\alpha\Re\psi(\pi/a_n)
=(c_++c_-)a_n^\alpha\sum_{k=1}^\infty a_k^{-\alpha}(1-\cos(\pi a_k/a_n))
\ge 4$, implying $\limsup_{u \to \infty} |\psi(u)|/|u|^\alpha\ge 4$. 

Suppose $c_+=c_-$. Then $\Im\psi=0$ and, since $u\mapsto 1/(1-\psi(u))$ is the characteristic function of $X_{\mathrm{e}_1}$, where $\mathrm{e}_1$ is a unit-mean exponential time independent of $X$, and $\limsup_{|u|\to\infty}1/(1-\psi(u))=1>0$, the Riemann--Lebesgue lemma implies that $X_{\mathrm{e}_1}$ is singular continuous. In particular, $u\mapsto1/(1-\psi(u))$ is not integrable on $\R$, giving $\s_1(0)=\infty$. If instead $c_+\ne c_-$, then $X$ is abrupt by Proposition~\ref{prop:asymmetry}. 

Furthermore, we note that $\beta_-=0$ and $\beta_+=\alpha$, with the strong oscillation of $\psi$ resulting in a large gap between these indices. To see that $\beta_+=\alpha$, note that $\int_{(-1,1)}|x|^p\nu(\D x)=2\sum_{n=1}^\infty 2^{(\alpha-p)\eta^n}$, where the sum diverges for all $p\le \alpha$ and converges for all $p>\alpha$. Since $\ov\sigma^2(u)=2\sum_{n\in\N, \, a_n<u} a_n^{2-\eta}$, the lower limit of $u^{-2}\ov\sigma^2(u)$ is attained along the sequence $a_n$ and 
\[
\liminf_{n\to\infty}a_n^{-2}\ov\sigma^2(a_n)
=\liminf_{n\to\infty}2\sum_{k=n+1}^\infty 2^{2\eta^n-(2-\alpha)\eta^k}
= \liminf_{n\to\infty}2\sum_{k=1}^\infty 2^{-\eta^n((2-\alpha)\eta^k-2)}=0,
\]
by the monotone convergence theorem. 
\end{ex}

\subsection{L\'evy measure with regularly varying tails}
\label{subsec:reg_var}
We begin with some estimates on the characteristic exponent $\psi(u)=-\sigma^2u^2/2 + iu\gamma +\int_{\R}(e^{iux}-1-iux\1_{\{|x|<1\}})\nu(\D x)$ for $u\in\R$ in terms of commonly used functions of the L\'evy measure for the proof of Proposition~\ref{prop:index_criteria}. Recall that $\ov\sigma^2(x)=\int_{(-x,x)}y^2\nu(\D y)$ for $x>0$ and define the functions $\ov\nu(x)\coloneqq\nu(\R\setminus(-x,x))$, $x\in(0,1)$, and
\begin{equation}
\label{eq:ov_nu_sigma}
\ov{\gamma}(x)\coloneqq\int_{(-1,1)\setminus(-x,x)}y\nu(\D y),
\quad \text{for }x>0.
\end{equation}

\begin{lem}\label{lem:LevyBounds}
{\normalfont(a)} For any $u\ne 0$, 
$\tfrac{1}{3}u^2\ov{\sigma}^2(|u|^{-1})
\leq-\Re\psi(u)-\tfrac{1}{2}u^2\sigma^2
\leq 2\ov{\nu}(2|u|^{-1})+
\tfrac{1}{2}u^2\ov{\sigma}^2\big(2|u|^{-1}\big)$.\\
{\normalfont(b)} For any $|u|> 1$, we have 
$|\Im\psi(u)+ (\ov{\gamma}(|u|^{-1})-\gamma)u|
\leq \tfrac{1}{3}u^2 \ov{\sigma}^2(|u|^{-1}) + \ov{\nu}(|u|^{-1})$.
\end{lem}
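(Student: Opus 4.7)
The plan is to invoke the Lévy--Khintchine representation of $\psi$ and apply elementary trigonometric inequalities to the integrand against $\nu$. Both parts rest on the identity
\[
\psi(u) = -\tfrac12\sigma^2 u^2 + iu\gamma + \int_\R\bigl(e^{iux} - 1 - iux\1_{\{|x|<1\}}\bigr)\nu(\D x),
\]
which gives $-\Re\psi(u) - \tfrac12 u^2\sigma^2 = \int_\R(1-\cos(ux))\,\nu(\D x)$ and $\Im\psi(u) = u\gamma + \int_\R(\sin(ux) - ux\1_{\{|x|<1\}})\,\nu(\D x)$. In both cases the compensator ensures absolute convergence near $0$ via $\int_{|x|<1}x^2\nu(\D x)<\infty$, so all subsequent splittings are legitimate.

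For part (a), the lower bound follows by restricting the non-negative integrand $1-\cos(ux)$ to $\{|x|<|u|^{-1}\}$, on which $|ux|\le 1$, and then applying $1-\cos(y)\ge y^2/3$ valid for $|y|\le 1$ (from the Taylor truncation $y^2/2 - y^4/24 \ge y^2(\tfrac12 - \tfrac{1}{24}) > y^2/3$). This produces exactly $\tfrac13 u^2\ov{\sigma}^2(|u|^{-1})$. For the upper bound I split at $|x| = 2|u|^{-1}$: on the inner region I apply $1 - \cos(y) \le y^2/2$ to extract $\tfrac12 u^2\ov{\sigma}^2(2|u|^{-1})$, while on the outer region the crude estimate $1-\cos(y)\le 2$ contributes $2\ov{\nu}(2|u|^{-1})$.

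For part (b), the key algebraic step is to absorb the drift correction $(\ov{\gamma}(|u|^{-1}) - \gamma)u$ into the integral. Using $|u|>1$ so that $|u|^{-1}<1$, I rewrite $(\ov{\gamma}(|u|^{-1}) - \gamma)u = u\int_{|u|^{-1}<|y|<1}y\,\nu(\D y) - u\gamma$, cancel $u\gamma$, and modify the truncation in the original integrand to reach
\[
\Im\psi(u) + \bigl(\ov{\gamma}(|u|^{-1}) - \gamma\bigr)u = \int_{|x|<|u|^{-1}}(\sin(ux) - ux)\,\nu(\D x) + \int_{|x|\ge |u|^{-1}}\sin(ux)\,\nu(\D x).
\]
The first summand is bounded using $|\sin(y) - y|\le |y|^3/6$ together with $|ux|\le 1$, which yields $|\sin(ux) - ux|\le u^2x^2/6$ and hence an integral bound of $\tfrac{u^2}{6}\ov{\sigma}^2(|u|^{-1})\le \tfrac{u^2}{3}\ov{\sigma}^2(|u|^{-1})$. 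The second summand is bounded in absolute value by $\ov{\nu}(|u|^{-1})$ using $|\sin(\cdot)|\le 1$. The triangle inequality then yields the claim.

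There is no serious obstacle: the argument is standard bookkeeping around the Lévy--Khintchine formula, and the only point requiring care is ensuring that the drift compensator is absorbed correctly in (b) before applying the pointwise bounds to each piece of the resulting integral.
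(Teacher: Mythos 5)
Your proof is correct and takes essentially the same approach as the paper: both start from the Lévy--Khintchine representation and then apply the elementary bounds $y^2/3\le 1-\cos y$ for $|y|\le 1$, $1-\cos y\le\min\{y^2/2,2\}$, and $|\sin y - y|\le |y|^3/6$, splitting the integral at $|ux|=1$ or $|ux|=2$; the paper phrases these splits as single pointwise inequalities (e.g.\ $\tfrac13 x^2\1_{\{|x|<1\}}\le 1-\cos x\le\tfrac12\min\{x^2,4\}$) and your rewriting of the compensator in part~(b) is exactly the paper's identity $\int_\R(\sin(ux)-ux\1_{\{|x|<1\}})\nu(\D x)=-u\ov\gamma(|u|^{-1})+\int_\R(\sin(ux)-ux\1_{\{|ux|<1\}})\nu(\D x)$, written over the equivalent region $\{|x|<|u|^{-1}\}$.
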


\begin{proof} 
(a) Note that 
$\frac{1}{3}x^2 \1_{\{|x|<1\}}
\le1-\cos(x)\le\frac{1}{2}\min\{x^2,4\}$ 
for all $x\in\R$. Integrating then gives
\[
\frac{1}{3}\int_\R(ux)^2 \1_{\{|ux|<1\}}\nu(\D x)
\leq \int_\R(1-\cos(ux))\nu(\D x)
\leq\frac{1}{2}\int_\R \min\{(ux)^2,4\}\nu(\D x),
\]
implying the inequality in (a).\\
(b) 
First note that
\[
\int_\R (\sin(ux)-ux\1_{\{|x|<1\}})\nu(\D x)
=-u\ov{\gamma}\big(|u|^{-1}\big)
+\int_\R (\sin(ux)-ux\1_{\{|ux|<1\}})\nu(\D x).
\]
Hence, integrating $|\sin(ux)-ux\1_{\{|ux|<1\}}|\le \frac{1}{3}\1_{\{|ux|<1\}}u^2x^2+\1_{\{|ux|\ge 1\}}$ gives the result.
\end{proof}

Throughout, we use the notation $f(x) \sim g(x)$ as $x \to a$ if $f(x)/g(x) \to 1$ as $x \to a$, $f(x)=\Oh(g(x))$ as $x\to a$ if $\limsup_{x\to a}f(x)/g(x)<\infty$ and $f(x)\approx g(x)$ if both $g(x)=\Oh(f(x))$ and $f(x)=\Oh(g(x))$. Assume throughout the remainder of this section that, for some $\alpha\in[0,2]$, the functions
\begin{equation}
\label{eq:rv_nu_density}
\wp_-(x)\coloneqq x^\alpha\nu((-\infty,-x])
\quad\text{and}\quad
\wp_+(x)\coloneqq x^{\alpha}\nu([x,\infty))
\quad\text{for }x\in(0,1),
\end{equation}
are slowly varying at $0$ (see definition in~\cite[Sec.~1.2]{MR1015093}).
The infinite variation of $X$ requires either $\alpha\ge 1$ or $\sigma^2>0$. However, if either $\alpha>1$ or $\sigma^2>0$, then $X$ is abrupt by Proposition~\ref{prop:index_criteria}. Thus, without loss of generality we assume $\alpha=1$ and $\sigma=0$ throughout the remainder of this section. Moreover, since we may modify arbitrarily the L\'evy measure of $X$ away from $0$ without changing $\mL(\mS)$ (by Proposition~\ref{prop:fin_var_perturbation}), we may assume that $\nu$ is supported on $(-1,1)$.

The following result controls the real and imaginary parts of the characteristic exponent $\psi$. This is important in determining whether $X$ is abrupt or strongly eroded because they feature in the integrand in the definition of $\s_1(r)$. 
The proof of Lemma~\ref{lem:karamata_Levy} is based on Karamata's theorem and the elementary estimates from Lemma~\ref{lem:LevyBounds}. Define the functions $\wp(x)\coloneqq \wp_+(x) + \wp_-(x)$ and $\wt\wp_\pm(x)\coloneqq \int_x^1 t^{-1}\wp_\pm(t)\D t$ for $x\in(0,1)$. Note that the infinite variation of $X$ is equivalent to $\int_0^1 x^{-1}\wp(x)\D x=\infty$, which is further equivalent to $\lim_{x\da 0}(\wt\wp_+(x)+\wt\wp_-(x))=\infty$. Moreover, we see that
the functions $\wt\wp_\pm$ are slowly varying at $0$ with $\lim_{x\da 0}\wt\wp_\pm(x)/\wp_\pm(x)=\infty$  by~\cite[Prop.~1.5.9a]{MR1015093}. 

\begin{lem}
\label{lem:karamata_Levy}
Suppose $\sigma=0$ and the L\'evy measure $\nu$ is supported on $(-1,1)$ and satisfies~\eqref{eq:rv_nu_density} for $\alpha=1$. Then $\ov\gamma$ in~\eqref{eq:ov_nu_sigma} satisfies $\ov\gamma(x)=\wp_+(x)+\wt\wp_+(x)-(\wp_-(x)+\wt\wp_-(x))$ for all $x\in(0,1)$ and
\begin{equation}
\label{eq:RV-psi}
\Re\psi(u)\approx |u|\wp\big(|u|^{-1}\big)
\quad\text{and}\quad
\Im\psi(u)=\big(\gamma - \ov\gamma\big(|u|^{-1}\big)\big)u
+\Oh\big(|u|\wp\big(|u|^{-1}\big)\big)
\quad\text{as }|u|\to\infty.
\end{equation}
\end{lem}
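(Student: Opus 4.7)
The plan is to first derive the explicit formula for $\ov{\gamma}(x)$ by Fubini, and then obtain the asymptotics of $\Re\psi$ and $\Im\psi$ by combining Lemma~\ref{lem:LevyBounds} with Karamata-type estimates on the tail $\ov{\nu}$ and the truncated second moment $\ov{\sigma}^2$.

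For the first claim, I split $(-1,1)\setminus(-x,x) = (-1,-x]\cup[x,1)$ and write $y = x + \int_x^y \D t$ for $y\in[x,1)$. Fubini then turns $\int_{[x,1)} y\,\nu(\D y)$ into $x\nu([x,\infty)) + \int_x^1 \nu([t,\infty))\D t$, which equals $\wp_+(x) + \wt\wp_+(x)$ by the definitions of $\wp_+,\wt\wp_+$ and the support assumption on $\nu$. The negative half contributes the symmetric term $-(\wp_-(x) + \wt\wp_-(x))$, yielding the stated identity.

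For the asymptotics of $\psi$ at infinity, Lemma~\ref{lem:LevyBounds} with $\sigma=0$ reduces both claims to estimating $u^2\ov{\sigma}^2(|u|^{-1})$ and $\ov{\nu}(|u|^{-1})$ as $|u|\to\infty$. By definition, $\ov{\nu}(x) = \wp(x)/x$, so $\ov{\nu}(|u|^{-1}) = |u|\wp(|u|^{-1})$. Integration by parts on each half-line gives
\[
\ov{\sigma}^2(x) = 2\int_0^x \wp(y)\,\D y - x\wp(x)
\qquad\text{for }x\in(0,1).
\]
Since $\wp_\pm$ are slowly varying at $0$, and hence locally integrable on a right-neighbourhood of $0$ by Potter's bounds, Karamata's theorem gives $\int_0^x \wp(y)\D y \sim x\wp(x)$ as $x\downarrow 0$, so $\ov{\sigma}^2(x)\sim x\wp(x)$. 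Setting $x=|u|^{-1}$ and using slow variation of $\wp$ to absorb the factor of $2$ appearing in the upper bound of Lemma~\ref{lem:LevyBounds}(a), both $u^2\ov{\sigma}^2(|u|^{-1})$ and $\ov{\nu}(|u|^{-1})$ are asymptotic to $|u|\wp(|u|^{-1})$.

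Inserting these into Lemma~\ref{lem:LevyBounds}(a) pinches $-\Re\psi(u)$ between constant multiples of $|u|\wp(|u|^{-1})$, giving $\Re\psi(u)\approx|u|\wp(|u|^{-1})$; Lemma~\ref{lem:LevyBounds}(b) then yields the stated expansion for $\Im\psi$ with remainder $\Oh(|u|\wp(|u|^{-1}))$. The proof is essentially a careful bookkeeping exercise, and the only non-trivial ingredient is the appeal to Karamata's theorem on integrals over $(0,x)$, which rests on the standard fact that slowly varying functions at $0$ are dominated near the origin by any negative power and are therefore locally integrable.
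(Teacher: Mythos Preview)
Your proof is correct and follows essentially the same approach as the paper: Fubini for the $\ov\gamma$ identity, the formula $\ov\sigma^2(x)=2\int_0^x\wp(y)\,\D y - x\wp(x)$ (which the paper derives via $\ov\sigma^2(x)=\int_0^x 2y\ov\nu(y)\D y - x^2\ov\nu(x)$ and $\ov\nu(y)=\wp(y)/y$), Karamata's theorem for $\int_0^x\wp(y)\D y\sim x\wp(x)$, and then Lemma~\ref{lem:LevyBounds}. Your explicit remarks about local integrability via Potter's bounds and about absorbing the factor $2$ by slow variation are details the paper leaves implicit, but the argument is otherwise identical.
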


\begin{proof}
The formula for $\ov\gamma$ follows by applying Fubini's theorem. Similarly, Fubini's theorem yields 
\[
\ov\sigma^2(x)
=\int_{(-x,x)}y^2\nu(\D y)
=\int_0^x 2y(\ov\nu(y)-\ov\nu(x))\D y
=\int_0^x 2y\ov\nu(y)\D y - x^2\ov\nu(x).
\]
Note that $\ov\nu(x)=x^{-1}\wp(x)$ for all $x\in(0,1)$. Karamata's theorem~\cite[Thm~1.5.11]{MR1015093} then shows that $\int_0^x 2y\ov\nu(y)\D y\sim 2x\wp(x)$ while $x^2\ov\nu(x)=x\wp(x)$, implying that $\ov\sigma^2(x)\sim x\wp(x)$ as $x\da 0$. Then the estimates in~\eqref{eq:RV-psi} follow from Lemma~\ref{lem:LevyBounds}.
\end{proof}

Lemma~\ref{lem:karamata_Levy} provides sufficient control over the characteristic exponent of $X$ in two regimes: if $\nu$ is near-symmetric (i.e. $\ov\gamma(x)=\Oh(1)$ as $x\da 0$, see definition in~\eqref{eq:ov_nu_sigma}), or if $\nu$ is skewed (i.e. $\lim_{x\da 0}x^{-1}\ov\gamma(x)/\ov\nu(x)\in\{-\infty,\infty\}$, which is equivalent to $\lim_{x\da 0}(\wt\wp_+(x)-\wt\wp_-(x))/\wp(x)\in\{-\infty,\infty\}$ by Lemma~\ref{lem:karamata_Levy}), motivating the two ensuing subsections. 
In the remainder of this section we freely apply equivalence~\eqref{eq:vigon_identity} and Theorems~\ref{thm:CM_global_smoothness} and~\ref{thm:zeta_criterion}.

\subsubsection{Near-symmetric L\'evy measure} 
Suppose $\ov\gamma$ in~\eqref{eq:ov_nu_sigma} satisfies $\ov{\gamma}(x)=\Oh(1)$ as $x\da 0$ (e.g., $\nu$ symmetric). Lemma~\ref{lem:karamata_Levy} gives $|\Im\psi(u)/u|=\Oh(1\vee\wp(|u|^{-1}))$ and thus $|\Im\psi(u)|=\Oh(|u|\vee\Re\psi(u))$ as $|u|\to\infty$. Thus, the integrand in the definition of $\s_1(r)$ is asymptotically bounded between multiples of $\wp(1/|u|)/(u(1+\wp(1/|u|)^2))$ as $|u|\to\infty$ for all $r\ne\gamma$.


\begin{ex}[Near-symmetric with high activity]
\label{ex:symm_high_activity}
Suppose $\liminf_{x\da0}\wp(x)=\infty$. Then we have $\liminf_{|u|\to\infty}|\psi(u)/u|=\infty$ and Theorem~\ref{thm:zeta_criterion}(ii-a) shows that $X$ is either eroded or abrupt. Moreover, by Lemma~\ref{lem:karamata_Levy}, we have
\[
\Re\frac{1}{1+iur-\psi(u)}
=\frac{1-\Re\psi(u)}{(1-\Re\psi(u))^2+(ur-\Im\psi(u))^2}
\approx\frac{1}{\Re\psi(u)}
\approx\frac{1}{|u|\wp(|u|^{-1})}
\quad\text{as}\enskip|u|\to\infty.
\]
Therefore $X$ is strongly eroded if and only if $x\mapsto 1/(x\wp(x))$ is not integrable at $0$.
\end{ex}

\begin{ex}[Near-symmetric with low activity]
\label{ex:symm_low_activity}
Suppose $\limsup_{x\da0}\wp(x)<\infty$. Then Lemma~\ref{lem:karamata_Levy} implies that $\limsup_{|u|\to\infty}|\psi(u)/u|<\infty$ so Theorem~\ref{thm:zeta_criterion}(i) shows that $X$ is strongly eroded.
\end{ex}

\subsubsection{Skewed L\'evy measure}
Suppose the function $\ov\gamma$ in~\eqref{eq:ov_nu_sigma} satisfies $\lim_{x\da 0}x^{-1}\ov\gamma(x)/\ov\nu(x)\in\{-\infty,\infty\}$ (equivalently, $\lim_{x\da 0}(\wt\wp_+(x)-\wt\wp_-(x))/\wp(x)\in\{-\infty,\infty\}$). This is the case if, for instance, we have $\liminf_{x\da0}\wp_+(x)/\wp_-(x)>1$ or $\limsup_{x\da0}\wp_+(x)/\wp_-(x)<1$ by~\cite[Prop.~1.5.9a]{MR1015093}. Moreover, either of these inequalities essentially imply that $X$ is abrupt. More precisely, if $x\mapsto\nu([x,1))=x^{-1}\wp_+(x)$ and $x\mapsto\nu((-1,-x])=x^{-1}\wp_-(x)$ are eventually differentiable with a monotone derivative as $x\da 0$, then the monotone density theorem~\cite[Thm~1.7.2]{MR1015093} shows that the respective derivatives are asymptotically equivalent to $-x^{-2}\wp_+(x)$ and $-x^{-2}\wp_-(x)$. Hence, the Radon-Nikodym derivative $\nu(\D x)/\nu(\D(-x))$ is asymptotically equivalent to $\wp_+(x)/\wp_-(x)$ as $x\da 0$. Thus, either of the limits $\liminf_{x\da0}\wp_+(x)/\wp_-(x)>1$ or $\limsup_{x\da0}\wp_+(x)/\wp_-(x)<1$ imply that $X$ is abrupt by Proposition~\ref{prop:asymmetry}. 

The following example shows that the condition in Proposition~\ref{prop:asymmetry} is close to being sharp. It constructs strongly eroded processes whose asymmetry, quantified by the quotient $(\wp_+(x)-\wp_-(x))/\wp_-(x)>0$, converges (arbitrarily) slowly to $0$ as $x\da0$. Clearly, the roles of $\wp_+$ and $\wp_-$ could be reversed without affecting these conclusions. 
Moreover, since Cauchy process is strongly eroded but spectrally one-sided infinite variation processes are abrupt, the following example also shows that the sum of an abrupt process and an independent strongly eroded process may result in an abrupt or a strongly eroded process.

\begin{ex}[Low asymmetry]
\label{ex:low_asymmetry}
Suppose $\wp_+(x)=(p(x)+q(x))\1_{(0,\ve)}(x)$ and $\wp_-(x)=p(x)\1_{(0,\ve)}(x)$ for positive slowly varying functions $p$ and $q$ defined on $(0,\ve)$. Define recursively $\log^{(1)}(x)=\log x$ and $\log^{(n+1)}(x)=\log(\log^{(n)}(x))$ for $n\in\N$. Fix $n\in\N\cup\{0\}$, define the functions $p(x)=1/\prod_{k=1}^n\log^{(k)}(1/x)$
and $q(x)=p(x)/\log^{(n+1)}(1/x)$ (where an empty product equals $1$ by convention) and choose $\ve$ sufficiently small to ensure $p$ and $q$ are both positive and $x\mapsto x^{-1}\wp_\pm(x)$ are monotone on $(0,\ve)$. Then Lemma~\ref{lem:karamata_Levy} gives $|\Re\psi(u)|\approx |u|p(1/|u|)$ and $|\Im\psi(u)|\approx |u|\log^{(n+2)}(|u|)$ as $|u|\to\infty$. Since $|u|^{-1}p(1/|u|)/(\log^{(n+2)}(|u|))^2$ is not integrable at infinity, then $\s_q(r)=\infty$ for all $r\in\R$, making $X$ strongly eroded with slowly varying asymmetry $\wp_+(x)/\wp_-(x)-1=q(x)/p(x)=1/\log^{(n+1)}(1/x)\to 0$ as $x\downarrow 0$ and with $|\Im\psi(u)/\Re\psi(u)|\approx \log^{(n+2)}(|u|)/p(1/|u|)\to\infty$ as $|u|\to\infty$. 

A similar analysis shows that the choice $q(x)=p(x)/\log^{(n)}(1/x)$ instead leads to an abrupt process. We point out that, in either case, $p$ cannot be much smaller since the infinite variation of $X$ requires the function $u^{-2}|\Re\psi(u)|\approx |u|^{-1}p(1/|u|)$ to be non-integrable at infinity by Lemma~\ref{lem:VigonThesis}.
\end{ex}

\subsection{Subordination}
\label{sec:subordination}

Let $X$ be an infinite variation L\'evy process and $Y$ be an independent driftless subordinator with Fourier--Laplace exponent $\phi(u)\coloneqq\log\e[\exp(uY_1)]$ for any $u\in\Complex$ with $\Re u\le 0$. Then, for any $c\ge 0$, the subordinated process $Z=(X_{ct+Y_t})_{t\ge 0}$ is L\'evy with characteristic exponent given by $u\mapsto \phi(\psi(u))+c\psi(u)$. The following example shows that subordinating an abrupt processes can result in either an abrupt or a strongly eroded process $Z$ and that subordinated strongly eroded processes can still be strongly eroded. 

\begin{ex}[Subordinating abrupt and strongly eroded processes]
\label{ex:subordination}
(a) Suppose $X$ is a Brownian motion, $Y$ is an $\alpha$-stable subordinator (with $\alpha\in(0,1)$) and $c=0$. Then $Z$ is a symmetric $2\alpha$-stable process, making it abrupt for $\alpha>1/2$, strongly eroded for $\alpha=1/2$ and of finite variation for $\alpha<1/2$.\\
(b) Suppose $c>0$ and $(X_{Y_t})_{t\ge 0}$ is of finite variation. Then $Z$ can be decomposed as the sum of two independent processes, one with the law of $(X_{ct})_{t\ge 0}$ and the other with the law of $(X_{Y_t})_{t\ge 0}$. Thus, Proposition~\ref{prop:fin_var_perturbation} implies $\mL(\mS_X)=c\mL(\mS_Z)$, where $\mS_X$ and $\mS_Z$ are the set of slopes of the convex minorants of $X$ and $Z$, respectively, with convention $cA:=\{ca:a\in A\}$ and $c\emptyset:=\emptyset$.\\
(c) Suppose $\lim_{|u|\to\infty}|\psi(u)/u|=\infty$ and $c>0$. Then $Z$ satisfies the conditions of Theorem~\ref{thm:zeta_criterion}(ii), making the process either strongly eroded or abrupt.\\
(d) Suppose $\limsup_{|u|\to\infty}|\psi(u)/u|<\infty$ and $Z$ is of infinite variation. Then, for some $C\ge 0$ and all $z\in\Complex$, we have $|\phi(z)|\le C|z|$ (see, e.g.~\cite[Sec.~1.2, p.~7]{MR1746300}) implying that $Z$ satisfies the assumptions of Theorem~\ref{thm:zeta_criterion}(i), making it strongly eroded. In particular, this is the case if $X$ is symmetric Cauchy of unit scale (i.e. the law of $X_1$ has parameters $(\lambda_1,\lambda_2)=(1,0)$ as in Example~\ref{ex:Cauchy-normal-attraction} above), $c=0$ and $Y$ has L\'evy measure $\nu_Y(\D t) = t^{-2}(\log (1/t))^{-2}\1_{(0,1)}(t)\D t$. Indeed, it suffices to verify that $Z$ has infinite variation. By~\cite[Thm~30.1]{MR3185174}, the L\'evy measure $\nu_Z$ of $Z$ is given by $\nu_Z(\D x)=\int_0^1\p(X_t\in\D x)\nu_Y(\D t)$, thus Fubini's theorem yields
\begin{align*}
\int_{(-1,1)}|x|\nu_Z(\D x)
&=\int_{(-1,1)}|x|\int_0^1 \p(X_t\in\D x)\frac{\D t}{t^2(\log (1/t))^{2}}\\
&=\int_0^1\int_0^1 \frac{x\D x}{t^2+x^2}\frac{2\D t}{\pi t(\log (1/t))^{2}}
=\int_0^1\frac{\log(1+t^{-2})}{\pi t(\log (1/t))^{2}}\D t
\ge\int_0^1\frac{\D t}{\pi t\log(1/t)}=\infty.\qedhere
\end{align*}
\end{ex}

\section{Zero-one law for stick breaking and the slopes of the minorant}
\label{sec:0-1lawsbres}

For $T>0$, let $( \lambda_n)_{n\in\N}$ be a uniform stick-breaking process on $[0,T]$, defined recursively in terms of an i.i.d. $\U(0,1)$ sequence $(U_n)_{n\in\N}$ as follows: $L_0\coloneqq T$, $ L_n\coloneqq U_nL_{n-1}$ and $\lambda_n\coloneqq L_{n-1}- L_n$ for $n\in\N$. 
Let $(V_n)_{n\in\N}$
be an
i.i.d. sequence of $\U(0,1)$ random variables, independent of the stick-breaking process
$(\lambda_n)_{n\in\N}$.
Denote $\R_+:=[0,\infty)$ and recall that a measurable function $f$ is in $\Loc(0+)$ if for some $\ve>0$ 
it satisfies $\int_0^\ve |f(t)|\D t<\infty$. 
The following zero-one law, which does not involve the L\'evy process $X$, is key for the analysis of the regularity of the convex hull of $X$.

\begin{thm}
\label{thm:0-1_law_formula}
Let $\varphi:\R_+\times[0,1]\to\R_+$ be measurable and bounded. Define  $\Sigma_T\coloneqq\sum_{n=1}^\infty\varphi(\lambda_n,V_n)$ and the function
$\phi:t\mapsto\int_0^1\varphi(t,u)\D u$. Then 
$\Sigma_T$ is either a.s. finite or a.s. infinite, characterised by
\begin{gather}
\label{eq:0-1-Law}
\Sigma_T<\infty\enskip\text{a.s.}
\quad\iff\quad
t\mapsto t^{-1}\phi(t)\in\Loc(0+).
\end{gather}
Moreover, the mean of $\Sigma_T$ is given by $\e\Sigma_T=\int_0^Tt^{-1}\phi(t)\D t$.
\end{thm}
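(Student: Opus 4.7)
The plan is to establish the mean formula by direct computation, deduce the ``easy'' direction of~\eqref{eq:0-1-Law} at once, prove the dichotomy via a self-similarity argument combined with L\'evy's upward $0$-$1$ law, and complete the ``hard'' direction through a Doob decomposition of the partial sums.

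The mean formula rests on two facts: (i) $V_n$ is independent of $\lambda_n$ and uniform on $[0,1]$, so $\e[\varphi(\lambda_n,V_n)]=\e[\phi(\lambda_n)]$; and (ii) $\lambda_n$ conditioned on $L_{n-1}$ is uniform on $[0,L_{n-1}]$, while for $n\ge 2$ the density of $L_{n-1}$ on $(0,T)$ is $s\mapsto\log^{n-2}(T/s)/((n-2)!\,T)$. An iterated Fubini computation then collapses $\sum_n\e[\phi(\lambda_n)]$ to $\int_0^T \phi(t)/t\,\D t$ via the exponential series $\sum_n \log^{n-1}(T/t)/(n-1)! = T/t$. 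Boundedness of $\phi$ (inherited from $\varphi$) makes $\int_0^T\phi(t)/t\,\D t<\infty$ equivalent to the local condition $t^{-1}\phi(t)\in\Loc(0+)$; under this condition $\e\Sigma_T<\infty$ forces $\Sigma_T<\infty$ a.s., proving the sufficient direction.

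For the dichotomy, introduce $p(T):=\p(\Sigma^{(T)}=\infty)$, where $\Sigma^{(T)}$ is the analogue of $\Sigma_T$ built from a stick-breaking process on $[0,T]$ using the same $\varphi$. Conditioning on $\lambda_1\sim\U(0,T)$, the tail $\sum_{n\ge 2}\varphi(\lambda_n,V_n)$ has the law of $\Sigma^{(T-\lambda_1)}$, yielding $T p(T)=\int_0^T p(s)\,\D s$; hence $p$ is absolutely continuous on $(0,\infty)$ with $p'=0$ a.e., i.e., $p\equiv\bar p$ for some constant $\bar p\in[0,1]$. Setting $\mathcal{F}_k:=\sigma(U_j,V_j:j\le k)$, the same conditioning gives $\p(\Sigma_T=\infty\mid\mathcal{F}_k)=p(L_k)=\bar p$; L\'evy's upward martingale convergence theorem forces the left-hand side to converge a.s.\ to $\1\{\Sigma_T=\infty\}$, so $\bar p\in\{0,1\}$. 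For the ``hard'' direction, assume $t^{-1}\phi(t)\notin\Loc(0+)$ and we must rule out $\bar p=0$. A conditional Kolmogorov three-series argument (given $\mathcal{F}^\lambda:=\sigma(\lambda_n:n\ge 1)$, the $\varphi(\lambda_n,V_n)$ are independent, non-negative, bounded by $\|\varphi\|_\infty$, with conditional means $\phi(\lambda_n)$) reduces the task to showing $\sum_n\phi(\lambda_n)=\infty$ a.s. Let $\mathcal{G}_n=\sigma(L_k:k\le n)$ and $A(s):=\int_0^s\phi$; Doob's decomposition gives $\sum_{n\le N}\phi(\lambda_n)=M_N+C_N$ with predictable increasing compensator $C_N=\sum_{n\le N}A(L_{n-1})/L_{n-1}$ and martingale $M_N$ of differences bounded by $2\|\varphi\|_\infty$. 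Under the reparametrisation $L_n=Te^{-\tau_n}$, the sequence $(\tau_n)_{n\ge 1}$ consists of the atoms of a rate-$1$ Poisson point process on $(0,\infty)$; Campbell's theorem combined with the standard criterion (for bounded non-negative $f$, the PPP sum $\sum_n f(\tau_n)$ is a.s.\ finite iff $\int f\,\D u<\infty$), a change of variables $s=Te^{-u}$, and an integration by parts identify $\{C_\infty<\infty\}$ with $\{\int_0^T\phi(s)/s\,\D s<\infty\}$ up to a bounded boundary contribution, so the assumption forces $C_N\uparrow\infty$ a.s. The dichotomy for bounded-difference martingales then says $M_N$ either converges a.s.\ to a finite limit or satisfies $\liminf M_N=-\infty$ and $\limsup M_N=+\infty$; in either case the monotonicity of $\sum_{n\le N}\phi(\lambda_n)=M_N+C_N$, combined with $C_N\to\infty$, forces the sum to $+\infty$ a.s.

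The main obstacle is precisely the transition from $C_N\uparrow\infty$ to $\sum_n\phi(\lambda_n)=\infty$. The quadratic variation of $M_N$ may itself diverge a.s., so standard $L^2$-martingale convergence does not apply; one must instead invoke the qualitative dichotomy for martingales with uniformly bounded increments (via an upcrossings argument or Chow's theorem) in order to rule out $M_N\to-\infty$ alone, which is exactly what turns divergence of the predictable compensator into divergence of the undecomposed sum.
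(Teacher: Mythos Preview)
Your proof is correct, and the overall architecture---reduce to the compensator $\sum_n \Phi(L_{n-1})$ via a bounded-increment martingale dichotomy, then analyse the compensator through the Poisson structure of $(\log(T/L_n))_{n\ge 1}$---is the same as the paper's. Two points of genuine divergence are worth flagging.

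Your zero-one law for $p(T)\coloneqq\p(\Sigma^{(T)}=\infty)$ is different from the paper's and rather nice: the self-similarity identity $Tp(T)=\int_0^T p(s)\,\D s$ forces $p$ to be constant, and L\'evy's upward theorem applied to $\p(\Sigma_T=\infty\mid\mathcal{F}_k)=p(L_k)$ pins the constant to $\{0,1\}$. The paper instead reads the dichotomy directly off the Laplace transform of $\wt\Sigma_T=\sum_n\Phi(L_n)$ via Campbell's formula, obtaining $\p(\wt\Sigma_T<\infty)\in\{0,1\}$ in one stroke. Your route is more structural; theirs is more computational but yields the Laplace transform as a by-product.

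On the hard direction, you introduce an extra layer---the conditional three-series reduction from $\sum_n\varphi(\lambda_n,V_n)$ to $\sum_n\phi(\lambda_n)$---before running the Doob decomposition. This is harmless but unnecessary: the paper applies the bounded-increment martingale dichotomy directly to $M_n=\sum_{k\le n}(\varphi(\lambda_k,V_k)-\Phi(L_{k-1}))$ with respect to $\mathcal{F}_n=\sigma(U_j,V_j:j\le n)$, landing on the same compensator $\sum_n\Phi(L_{n-1})$ in one step. Finally, where you use integration by parts to identify $\int_0^T A(s)s^{-2}\,\D s$ with $\int_0^T\phi(s)s^{-1}\,\D s$ up to bounded terms (i.e.\ to pass from the $\Phi$-criterion to the $\phi$-criterion), the paper instead randomises the horizon by an independent exponential and exploits the resulting Poisson point process on $\R_+\times[0,1]$ to get the $\phi$-criterion directly. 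Both devices serve the same purpose; yours is the more elementary.
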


Note that, by~\eqref{eq:0-1-Law} in Theorem~\ref{thm:0-1_law_formula}, $\Sigma_T$ is either a.s. finite for all $T>0$ or a.s. infinite for all $T>0$. 
Furthermore, the proof of Theorem~\ref{thm:0-1_law_formula} implies that 
$\Sigma_T<\infty$ a.s. if and only if
$t\mapsto t^{-1}\Phi(t)\in\Loc(0+$, 
where
$\Phi(t):=t^{-1}\int_0^t\phi(s)\D s$.

\begin{proof} 
Proving that $\Sigma_T$ is either a.s. finite or a.s. infinite, according to~\eqref{eq:0-1-Law}, requires three steps. First, we show that the events $\{\Sigma_T=\infty\}$ and $\{\wt\Sigma_T=\infty\}$ agree a.s., where  $\wt\Sigma_T\coloneqq\sum_{n=1}^\infty\Phi(L_n)$. 
Second, we use the Poisson process embedded in the stick remainders  $(L_n)_{n\in\N}$ to establish that $\p(\wt\Sigma_T=\infty)=1$ if $\int_0^1t^{-1}\Phi(t)\D t=\infty$ and otherwise $\p(\wt\Sigma_T=\infty)=0$.
Third, we use the Poisson point process, given by the stick-breaking process on an independent exponential time horizon, to establish~\eqref{eq:0-1-Law}.

Define the filtration $(\mathcal{F}_n)_{n\in\N \cup\{0\}}$ by $\mathcal{F}_{0}\coloneqq\{\emptyset,\Omega\}$ and $\mathcal{F}_{n}\coloneqq\sigma((U_k,V_k);k\leq n)$ for $n\ge1$. Note that the conditional distribution of $\lambda_n$, given $\mathcal{F}_{n-1}$, is uniform on the interval  $(0,L_{n-1})$, implying 
\[
\e[\varphi(\lambda_n,V_n)|\mathcal{F}_{n-1}]
=\e[\phi(\lambda_n)|\mathcal{F}_{n-1}]
=L_{n-1}^{-1}\int_{0}^{L_{n-1}}\phi(s)\D s
=\Phi(L_{n-1}),
\quad n\in\N.
\]
Hence, the process 
$(M_n)_{n\in\N\cup\{0\}}$, given by $M_0\coloneqq0$ and $M_n\coloneqq\sum_{k=1}^n(\varphi(\lambda_k,V_k)-\Phi(L_{k-1}))$ for $n\in\N$, is a $(\mathcal{F}_n)$-martingale with bounded increments (recall that $\varphi$ is bounded). By~\cite[Prop.~7.19]{MR1876169}, the event $A:=\{M_n\text{ converges to a finite limit}\}$ satisfies the following equality  
\begin{equation}
\label{eq:equiv_events}
A=\left\{\sup_{n\in\N }M_n<\infty\right\}
=\left\{\inf_{n\in\N }M_n>-\infty\right\}
\quad\text{a.s.}
\end{equation}
On $A$ we have $\Sigma_T=\wt\Sigma_T+\lim_{n\to\infty}M_n$, implying that $\Sigma_T=\infty$ if and only if $\wt\Sigma_T=\infty$. On the complement of $A$, by~\eqref{eq:equiv_events}, we must have $\sup_{n\in\N}M_n=-\inf_{n\in\N}M_n=\infty$, implying $\Sigma_T=\wt\Sigma_T=\infty$. Thus, the events $\{\Sigma_T=\infty\}$ and $\{\wt\Sigma_T=\infty\}$ agree a.s. 

Note that $(-\log(U_n))_{n\geq1}$ are iid exponential random variables with unit mean. Hence, the process $(N(t))_{t\ge0}$, given by $N(t)\coloneqq\sum_{n=1}^\infty\1_{\{-\log(T^{-1}L_n)\leq t\}}$, is a standard Poisson process. Denote by $N(\D x)$ the corresponding Poisson point process on $(0,\infty)$. Since $\wt\Sigma_T=\int_{(0,\infty)}\Phi(Te^{-x})N(\D x)$, Campbell's formula~\cite[Lem.~12.2]{MR1876169} yields the Laplace transform of $\wt\Sigma_T$:
\[
\log\e[\exp(-q\wt\Sigma_T)]
=-\int_0^\infty\big(1-e^{-q\Phi(Te^{-x})}\big)\D x
=-\int_{0}^{T}\big(1-e^{-q\Phi(t)}\big)\frac{\D t}{t},
\quad q\geq0.
\]
Since $\varphi$ is bounded, there exists $q_0>0$ such that $0\leq \Phi(t)\leq 1/q_0$ for all $t>0$. The inequalities $x/2\le 1-e^{-x}\leq x$, valid  for $x\in[0,1]$, imply the following for all $q\in(0,q_0]$:
\begin{align}
\label{eq:integral_bounds_Phi}
(q/2)\int_{0}^{T}\Phi(t)\frac{\D t}{t} 
\leq \int_{0}^{T}\big(1-e^{-q\Phi(t)}\big)\frac{\D t}{t}
\leq
q\int_{0}^{T}\Phi(t)\frac{\D t}{t}.
\end{align}
The monotone convergence theorem implies
\begin{equation*}
\p(\wt\Sigma_T<\infty)
=\lim_{q\downarrow0}\e\big[e^{-q\wt\Sigma_T}\big]
=\lim_{q\downarrow0}\exp\bigg(-\int_{0}^{T}
\big(1-e^{-q\Phi(t)}\big)\frac{\D t}{t}\bigg)
=\begin{cases}
1, & t^{-1}\Phi(t)\in\Loc(0+),\\
0, & t^{-1}\Phi(t)\not\in\Loc(0+).
\end{cases}
\end{equation*}
Since $\{\Sigma_T=\infty\}=\{\wt\Sigma_T=\infty\}$ a.s., the first claim in the theorem follows.

In order to prove the equivalence in~\eqref{eq:0-1-Law}, note first that whether the function $t\mapsto t^{-1}\Phi(t)$ is in $\Loc(0+)$
does not depend on $T$. Hence the random variable $\wt\Sigma_T$ (and thus $\Sigma_T$) is either finite a.s. for all $T>0$ or infinite a.s. for all $T>0$. 
Let $E$ be independent of $(\lambda,V)$ and exponentially distributed with unit mean.
Thus 
$\{\Sigma_T=\infty\}=\{\Sigma=\infty\}$ almost surely,
where 
$\Sigma:=\sum_{n\in\N}\varphi(E\lambda_n/T,V_n)$.
It is hence sufficient to prove the equivalence between
$\p(\Sigma<\infty)=1$ and  
$t^{-1}\phi(t)\in\Loc(0+)$.

Since $(E\lambda_n/T)_{n\in\N}$ is a stick-breaking process on the random interval $[0,E]$,~\cite[App.~A]{CM_Fluctuation_Levy}
and the marking theorem imply that 
$\Xi=\sum_{n=1}^\infty \delta_{(E\lambda_n/T,V_n)}$
is a Poisson point process 
with mean measure 
\begin{equation*}
\mu(\D t,\D u):=t^{-1}e^{-t} \D t\D u,\quad(t,x)\in\R_+\times[0,1].
\end{equation*}
Moreover, as
$\Sigma=\int_{\R_+\times[0,1]}\varphi(t,u)\Xi(\D t, \D u)$,
Campbell's formula~\cite[Lem.~12.2]{MR1876169}  implies
\begin{align}
\label{eq:CM_measure_exponential}
-\log\e\big[e^{- q\Sigma}\big] 
 =  \int_{\R_+\times[0,1]}\big(1-e^{- q\varphi(t,u)}\big)
\mu(\D t, \D u)
= \int_{\R_+\times[0,1]}\big(1-e^{- q\varphi(t,u)}\big)
e^{-t}\frac{\D t}{t}\D u.
\end{align}
There exists $q_1>0$ such that for all $q\in(0,q_1]$
we have 
$0\leq \varphi(t,u)\le 1/q$ for all $(t,u)\in\R_+\times[0,1]$. 
The elementary inequalities that implied~\eqref{eq:integral_bounds_Phi}
yield the following for all $q\in(0,q_1]$:
\begin{align}
\label{eq:bounds_on_integrals_CF}
(q/2)\int_0^\infty \phi(t)e^{-qt}\frac{\D t}{t}\leq
    \int_{\R_+\times[0,1]}\big(1-e^{- q\varphi(t,u)}\big) e^{-qt}\frac{\D t}{t}\D u
    \leq q\int_0^\infty \phi(t)e^{-qt}\frac{\D t}{t}.
\end{align}
The monotone convergence theorem 
yields
$\p(\Sigma<\infty)
=\lim_{q\downarrow0}\e[e^{-q\Sigma}]$.
Since
$\int_0^\infty t^{-1}\phi(t)e^{-t}\D t<\infty$
if and only if 
$t^{-1}\phi(t)\in\Loc(0+)$, 
\eqref{eq:CM_measure_exponential}-\eqref{eq:bounds_on_integrals_CF}
imply that 
$\p(\Sigma<\infty)=1$
if and only if $t^{-1}\phi(t)\in\Loc(0+)$, establishing~\eqref{eq:0-1-Law}.

Recall the identity
$\sum_{n=1}^\infty \e[f( \lambda_{n})]
=\int_0^T t^{-1}f(t)\D t$
for any measurable $f:[0,T]\to\R_+$, see e.g.~\cite[Eq.~(12)]{bang2021asymptotic}.
Since the stick-breaking process
$(\lambda_n)_{n\in\N}$
and the iid sequence
$(V_n)_{n\in\N}$ are independent, the following holds
$\e\Sigma_T
=\sum_{n=1}^\infty\e\left[\varphi(\lambda_n,V_n)\right]
=\sum_{n=1}^\infty\e\left[\phi(\lambda_{n})\right]
=\int_0^T \phi(t)t^{-1}\D t$.
\end{proof}

Recall that $X$ is a L\'evy process (see \cite[Def.~1.6, Ch.~1]{MR3185174}), assumed not to be compound Poisson with drift. Let $F(t,x)\coloneqq \p(X_t\le x)$ for all $t\ge0$ and $x\in\R$. Let $u\mapsto F^{-1}(t,u)$ be the right-inverse of $x\mapsto F(t,x)$ for every $t>0$ and note that $X_t\eqd F^{-1}(t,U)$ for any $U\sim\U(0,1)$. 
The convex minorant of the path of $X$ on the interval $[0,T]$
is piecewise linear, with the set of length-height
pairs $\{(\ell_n,\xi_n)\,:\,n\in\N\}$ of the maximal faces having the same law as the following random set 
\begin{equation}
\label{eq:Levy-SB}
\{(\ell_n,\xi_n)\,:\,n\in\N\}
\eqd 
\{(\lambda_n,F^{-1}(\lambda_n,V_n))\,:\,n\in\N\},
\end{equation}
see e.g.~\cite[Thm~11]{CM_Fluctuation_Levy}. 
This identity in law and Theorem~\ref{thm:0-1_law_formula} yield the following result.

\begin{cor}
\label{cor:slope_criteria}
Pick a measurable function $f:(0,\infty)\times\R\to \R$ and a measurable set $I\subseteq\R$. Then we have  
$\p(\{f(\ell_n,\xi_n)\in I\}\enskip\text{i.o.})\in\{0,1\}$ and 
\begin{equation}
\label{eq:slopes}
\p(\{f(\ell_n,\xi_n)\in I\}\enskip\text{i.o.})=1
\quad\iff\quad
\int_0^1\p(f(t,X_t)\in I)\frac{\D t}{t}=\infty.
\end{equation}
\end{cor}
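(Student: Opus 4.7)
The plan is to realise the indicator $\mathbf{1}\{f(\ell_n,\xi_n)\in I\}$ as the summand $\varphi(\lambda_n,V_n)$ of a stick-breaking sum and invoke Theorem~\ref{thm:0-1_law_formula} directly. Using the identity in law~\eqref{eq:Levy-SB} between the countable set $\{(\ell_n,\xi_n):n\in\N\}$ and $\{(\lambda_n,F^{-1}(\lambda_n,V_n)):n\in\N\}$, the event that $f(\ell_n,\xi_n)\in I$ for infinitely many $n$ has the same probability as the event that $f(\lambda_n,F^{-1}(\lambda_n,V_n))\in I$ for infinitely many~$n$. So it suffices to analyse the latter.

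Define the measurable, bounded function $\varphi:\R_+\times[0,1]\to\R_+$ by
\[
\varphi(t,u)\coloneqq \mathbf{1}\{f(t,F^{-1}(t,u))\in I\},
\]
where joint measurability of $(t,u)\mapsto F^{-1}(t,u)$ follows from the standard fact that the right-inverse of a càdlàg family of distribution functions is jointly Borel (so $\varphi$ is Borel). With this choice, $\Sigma_T=\sum_{n=1}^\infty\varphi(\lambda_n,V_n)$ counts the (random) number of indices $n$ with $f(\lambda_n,F^{-1}(\lambda_n,V_n))\in I$. Hence
\[
\{f(\ell_n,\xi_n)\in I\enskip\text{i.o.}\}\stackrel{d}{=}\{\Sigma_T=\infty\},
\]
and Theorem~\ref{thm:0-1_law_formula} asserts that $\Sigma_T$ is a.s.\ finite or a.s.\ infinite, yielding the zero-one law for $\p(\{f(\ell_n,\xi_n)\in I\}\text{ i.o.})$.

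For the characterisation, compute the associated marginal
\[
\phi(t)=\int_0^1\varphi(t,u)\,\D u=\p\bigl(f(t,F^{-1}(t,U))\in I\bigr)=\p(f(t,X_t)\in I),
\]
where $U\sim\U(0,1)$ and we used $F^{-1}(t,U)\eqd X_t$. By Theorem~\ref{thm:0-1_law_formula}, $\Sigma_T<\infty$ a.s.\ if and only if $t\mapsto t^{-1}\phi(t)\in\Loc(0+)$. Since $\phi$ is bounded by $1$, the integral $\int_\varepsilon^1 t^{-1}\phi(t)\,\D t$ is finite for every $\varepsilon>0$, so the local integrability at $0$ is equivalent to $\int_0^1 t^{-1}\p(f(t,X_t)\in I)\,\D t<\infty$. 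Taking complements yields~\eqref{eq:slopes}.

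There is no real obstacle beyond the measurability check for $F^{-1}$, which is routine; the content of the statement is entirely carried by the zero-one law in Theorem~\ref{thm:0-1_law_formula} combined with the stick-breaking representation~\eqref{eq:Levy-SB}.
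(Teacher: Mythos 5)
Your proof is correct and follows exactly the same route as the paper's: define $\varphi(t,u)=\1\{f(t,F^{-1}(t,u))\in I\}$, compute $\phi(t)=\p(f(t,X_t)\in I)$, and apply Theorem~\ref{thm:0-1_law_formula} together with the identity in law~\eqref{eq:Levy-SB}. The only difference is that you spell out the measurability of $F^{-1}$ and the reduction of local integrability to the integral over $(0,1)$, which the paper leaves implicit.
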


Note that Corollary~\ref{cor:slope_criteria} (with $f(t,x)=x/t$) directly implies Theorem~\ref{thm:CM_local_smoothness} above.

\begin{proof}
Define the function $\varphi:(s,u)\mapsto\1_{\{f(s,F^{-1}(s,u))\in I\}}$. For any $U\sim\U(0,1)$ we have  
$$\phi(s)=\int_0^1\varphi(s,u)\D u= \p(f(s,F^{-1}(s,U))\in I)=\p(f(s,X_s)\in I)\quad\text{for all $s>0$.}$$
An application of Theorem~\ref{thm:0-1_law_formula} and~\eqref{eq:Levy-SB}  imply the corollary. 
\end{proof}

\begin{rem}
\textbf{(i)} The equality in law in~\eqref{eq:Levy-SB}
follows from the representation theorem for convex minorants of L\'evy processes in~\cite[Thm~11]{CM_Fluctuation_Levy} because
$X$ is assumed not to be compound Poisson with drift.
Under this assumption, the law of $X_t$ (for any $t>0$) is diffuse (i.e. non-atomic) by Doeblin's lemma~\cite[Lem.~15.22]{MR1876169}, implying that no two linear segments in the piecewise linear convex function defined  in~\cite[Thm~11]{CM_Fluctuation_Levy}
have the same slope. Thus all these linear segments are maximal faces. The identity in law in~\eqref{eq:Levy-SB} is essentially the content of~\cite[Thm~1]{MR2978134}. Since the proof of~\cite[Thm~1]{MR2978134} is highly non-trivial and moreover relies on deep results in the fluctuation theory of L\'evy processes, we chose the route above based on~\cite[Thm~11]{CM_Fluctuation_Levy}, whose proof is short and elementary, requiring only the definition of a L\'evy process. 

\noindent \textbf{(ii)}
Corollary~\ref{cor:slope_criteria} can be used to determine the limit points of the random countable set $\mS_f\coloneqq \{f(\ell_n,\xi_n):n\in\N\}$. Indeed, the sets $\mL(\mS_f)$ and $\mL^\pm(\mS_f)$ are determined by the following countable family of events  $\{|\mS_f\cap(a,b)|=\infty\}=\{f(\ell_n,\xi_n)\in(a,b)\text{ \emph{i.o.}}\}$, where $a<b$ range over the rational numbers.
By Corollary~\ref{cor:slope_criteria}, the indicator of any such event is almost surely constant, making the limit sets $\mL(\mS_f)$ and $\mL^\pm(\mS_f)$ also constant almost surely. In particular, $\mL(\mS_f)$ and $\mL^\pm(\mS_f)$ are independent of $\mS_f$ itself and are not affected under conditioning on an event of positive probability. In par, we may modify the L\'{e}vy measure of $X$ by adding or removing a finite amount of mass anywhere on $\R\setminus\{0\}$ without altering $\mL(\mS_f)$ or $\mL^\pm(\mS_f)$. 
\end{rem}

\section{Continuous differentiability of the boundary of the convex hull -- proofs}
\label{sec:proofssection}

This section is dedicated to proving the results stated in Section~\ref{sec:introduction}. Let $\psi$ be the L\'evy--Khintchine exponent~\cite[Thm~8.1 \& Def.~8.2]{MR3185174} of the L\'evy process $X$, defined by $\psi(u)=t^{-1}\log\e \exp(iuX_t)$, for $t>0$, $u\in\R$, and satisfying
$\psi(u)=-\sigma^2u^2/2 + iu\gamma +\int_{\R}(e^{iux}-1-iux\1_{\{|x|<1\}})\nu(\D x)$
for $u\in\R$ for constants $\sigma\ge0$, $\gamma\in\R$ and L\'evy measure $\nu$ on $\R$ with $\int_{\R} \min\{1,x^2\}\nu(\D x)<\infty$ and $\nu(\{0\})=0$. The vector $(\gamma,\sigma^2,\nu)$ is known as the generating triplet of $X$ corresponding to the cutoff function $x\mapsto\1_{(-1,1)}(x)$, see~\cite[Def.~8.2]{MR3185174}.


\subsection{Finite variation -- proofs}
\label{subsec:finite_var_proofs}
The L\'evy process $X$ has paths of finite variation if and only if $\sigma=0$ and $\int_{(-1,1)}|x|\nu(\D x)<\infty$. In this case one defines the natural drift $\gamma_0\in\R$ of $X$ by  
\begin{equation}
\label{eq:natural_drift}
\gamma_0 \coloneqq  \gamma-\int_{(-1,1)}x\nu(\D x).
\end{equation} 

\begin{proof}[Proof of Proposition~\ref{prop:fin_var_criteria}]
Since $X$ has finite variation, by~\cite[Thm~43.20]{MR3185174}
yields $\lim_{t\downarrow 0}X_t/t=\gamma_0$ a.s.
Hence the positive (resp. negative) half-line is not regular  for the process $(X_t-ct)_{t\geq0}$ if  $c>\gamma_0$ (resp. $c<\gamma_0$). Rogozin's criterion 
(see e.g.~\cite[Thm~6]{CM_Fluctuation_Levy})
then yields $\int_0^1t^{-1}\p(X_t-(\gamma_0+\ve)t>0)\D t<\infty$ and  $\int_0^1t^{-1}\p(X_t-(\gamma_0-\ve)t<0)\D t<\infty$ for all $\ve>0$. 
By Theorem~\ref{thm:CM_local_smoothness} we get that, for every $\ve>0$,  the set $\mS\cap (\R\setminus(\gamma_0-\ve,\gamma_0+\ve))$ is finite a.s.
Moreover,
since $X$ is of infinite activity, by~\cite[Thm~11]{CM_Fluctuation_Levy} and
Doeblin's  lemma~\cite[Lem.~15.22]{MR1876169}, the cardinality of the set of slopes $\mS$ is infinite.  Thus we conclude that $\mS$ is bounded a.s. Furthermore, $r\in\R\setminus\{\gamma_0\}$ cannot be in the limit set $\mL(\mS)$ a.s. and  $\gamma_0\in \mL(\mS)$ a.s. In particular, the set of slopes $\mS$ consists of isolated points with a single accumulation point $\mL(\mS)=\{\gamma_0\}$ satisfying  $\gamma_0\notin  \mS$ a.s. Hence $C'$, whose image is contained in the closure of $\mS$, is bounded and discontinuous on $\bigcup_{r\in\mS}\partial I_r$.

Without loss of generality we may assume that $X$ has right-continuous paths with left limits.
In particular, denote $X_{t-}:=\lim_{s\ua t}X_s$ if $t>0$ and $X_{0-}:=X_0$ otherwise.
Let  
$v$ be the last time in $[0,T]$ the process $(X_t-\gamma_0t)_{t\geq0}$ attains its minimum,
i.e.  $v$ is the greatest time in $[0,T]$ satisfying
$\min\{X_v,X_{v-}\}-\gamma_0 v=\inf_{t\in[0,T]}(X_t-\gamma_0t)$.
Since $t\mapsto C(t)-\gamma_0 t$ is the convex minorant on $[0,T]$ of $t\mapsto X_t-\gamma_0 t$, 
if the latter function attained its minimum at two or more times with positive probability, the former function, which is piecewise linear and convex, would have a face of slope zero with positive probability.
Since the increments of $X$ are diffuse by Doeblin's  lemma~\cite[Lem.~15.22]{MR1876169}, this contradicts the formula for the slopes in~\cite[Thm~11]{CM_Fluctuation_Levy}. Moreover, $v$ is the a.s. unique time at which the convex function $t\mapsto C(t)-\gamma_0 t$ on $[0,T]$ attains its minimum. 

The probability $\p(v=0)=\p(0=\inf_{t\in[0,T]}(X_t-\gamma_0t))$ (resp. $\p(v=T)=\p(0=\inf_{t\in[0,T]}(\gamma_0t+X_T-X_{T-t}))= \p(0=\inf_{t\in[0,T]}(\gamma_0t-X_t))$) is positive if zero is not regular for $(X_t-\gamma_0t)_{t\geq0}$ (resp. $(-X_t+\gamma_0t)_{t\geq0}$) for the half-line $(-\infty,0)$, which is by Rogozin's criterion 
(see e.g.~\cite[Thm~6]{CM_Fluctuation_Levy}) equivalent to $I_-<\infty$ (resp. $I_+<\infty$).
In particular, $v\in(0,T)$ a.s. is equivalent to $I_+=I_-=\infty$. 
We proved above that $\gamma_0$ is the only limit point of $\mS$ a.s. Thus, by definition of 
$\mL^-(\mS)$ (resp. $\mL^+(\mS)$) in the paragraph containing~\eqref{eq:I_pm} above, $\gamma_0$ is a left (resp. right) limit point of $\mS$ if and only if the set $\mS\cap(-\infty,\gamma_0)$  (resp. $\mS\cap(\gamma_0,\infty)$) has infinitely many elements a.s., which is by Theorem~\ref{thm:CM_local_smoothness} equivalent to $I_-=\infty$ (resp. $I_+=\infty$).
Thus $I_\pm<\infty$ implies  $\mL^\pm(\mS)=\emptyset$ and $\mL^\mp(\mS)=\{\gamma_0\}$, where $\mp 1=-(\pm 1)$. Furthermore, if 
$\gamma_0$ is in $\mL^+(\mS)$ (resp. $\mL^-(\mS)$), then 
$C'(v)=\inf\{s>\gamma_0: s\in\mS\}=\gamma_0$
(resp. $C'(v-)=\sup\{s<\gamma_0:s\in\mS\}=\gamma_0$), where the infimum (resp. supremum) is necessarily taken over a non-empty set. If $\mL^+(\mS)=\emptyset$ (resp. $\mL^-(\mS)=\emptyset$), on the event $v\in(0,T)$ we have
$C'(v)=\inf\{s>\gamma_0: s\in\mS\}>\gamma_0$
(resp. $C'(v-)=\sup\{s<\gamma_0:s\in\mS\}<\gamma_0$), where the infimum (resp. supremum) is necessarily taken over a finite non-empty set. This concludes the proof of the proposition. 
\end{proof}

\begin{proof}[Proof of Corollary~\ref{cor:I_pm}]
The equivalence $I_\pm=\infty\iff\s_1\notin\Loc(\gamma_0\pm)$
follows from the equivalence in~\eqref{eq:vigon_identity}. 
We now prove that $I_+=\infty$ is equivalent to the integral condition in the corollary (the equivalence involving $I_-=\infty$ follows from this one by considering $-X$). We consider two cases:\\
\noindent {\bf(I)} $\nu((-\infty,0))<\infty$: then $\p(X_t>t\gamma_0)\ge \exp(-t\nu((-\infty,0)))$, implying $I_+=\infty$ by definition. The function $\varpi(x)\coloneqq x/\int_0^x \nu((-\infty,-y))\D y$, $x>0$, is bounded below by the positive constant $1/\nu((-\infty,0))\in(0,\infty]$, implying $\int_{(0,1)}\varpi(x)\nu(\D x)=\infty$ as $X$ is of infinite activity (thus $\nu((0,\infty))=\infty$). 

\noindent {\bf(II)} $\nu((-\infty,0))=\infty$: then $\varpi(x)$ is finite for $x>0$ and, by~\cite[Thm~1]{MR1465812}, $I_+=\infty$ is equivalent to 
$\int_{(0,1)}\nu((x,\infty))\D\varpi(x)=\infty$. Note further that $1/\varpi(x)=x^{-1}\int_0^x\nu((-\infty,-y))\D y$ is a non-increasing function since it is the average over the interval $(0,x)$ of the non-increasing function $y\mapsto\nu((-\infty,-y))$. This makes the Radon measure $\D\varpi(x)$ well defined. 
The function $\varpi$ is continuous on $(0,\infty)$ and, since $1/\varpi(x)\ge \nu((-\infty,-x))\to\infty$ as $x\da 0$, we have $\lim_{x\da 0}\varpi(x)=0$. Fubini's theorem yields
\begin{align*}
\int_{(0,1)} \nu((x,\infty))\D\varpi(x)
&=\int_{(0,1)}\int_{(x,\infty)} \nu(\D y)\D\varpi(x)
=\int_{(0,\infty)}\int_{(0,1\vee y)} \D\varpi(x)\nu(\D y)\\
&=\int_{(0,\infty)}\varpi(1\vee y)\nu(\D y)
=\nu([1,\infty)) \varpi(1)
+\int_{(0,1)} \varpi(y)\nu(\D y).
\end{align*}
Thus $\int_{(0,1)} \varpi(y)\nu(\D y)=\infty$ is equivalent to $\int_{(0,1)} \nu((x,\infty))\D\varpi(x)=\infty\iff I_+=\infty$~\cite[Thm~1]{MR1465812}. 
\end{proof}

\subsection{Infinite variation -- proofs}
\label{subsec:infinite_var_proofs}



\begin{proof}[Proof of Theorem~\ref{thm:CM_global_smoothness}]
First note that the smoothness of the boundary of the convex hull of $X$ requires $X$ to have infinite variation by Proposition~\ref{prop:fin_var_criteria}. Similarly, if $X$ is of finite variation, then~\eqref{eq:integral_slope} fails for any compact interval $I$ with $\gamma\notin I$. Thus, both conditions in Theorem~\ref{thm:CM_global_smoothness} require $X$ to have infinite variation, which we assume in the remainder of this proof.

Since $X$ is of infinite variation, the set of slopes $\mS$ is unbounded below and above by Rogozin's theorem as explained in the first paragraph of Subsection~\ref{subsec:infin_var_fin_time}. This makes the boundary of the convex hull of $X$ smooth at times $0$ and $T$. It remains to prove that the convex minorant $C$ of $X$ is continuously differentiable if and only if the condition~\eqref{eq:integral_slope} holds for all bounded intervals $I$. Recall that the right-derivative $C'$ is right-continuous by definition, and thus, its image equals $\mL^+(\mS)\cup\mS$ (see Table~\ref{tab:dictionary_C'} for all possible behaviours of the right-derivative of a piecewise linear convex function). 

Suppose the boundary of the convex hull of $X$ is smooth a.s., making $C'$ continuous a.s. By the intermediate value theorem, since $C'$ is unbounded from below and above, its image $\mL^+(\mS)\cup\mS$ must equal $\R$. Since $\mS$ is countable, $\mL^+(\mS)\cup\mS=\R$ a.s. implies $\mL^+(\mS)=\R$ a.s. Since $\mL^+(\mS)\subset\mL(\mS)$, we have $\mL(\mS)=\R$, implying that $\mS$ is dense in $\R$ a.s. and thus
condition~\eqref{eq:integral_slope} holds for all bounded intervals $I$. 

Now assume~\eqref{eq:integral_slope} holds for all bounded intervals $I$. Note that $\mL^+(\mS)\cap\mL^-(\mS)$ contains the interior of $\mL(\mS)$, so the condition $\mL(\mS)=\R$ implies $\mL^+(\mS)=\R$. Since $C'$ is right-continuous and non-decreasing with image $\mL^+(\mS)\cup\mS=\R$, it must be continuous, completing the proof.
\end{proof}

\begin{proof}[Proof of Proposition~\ref{prop:fin_var_perturbation}]
Recall that $Y$ and $Z$ are possibly dependent L\'evy processes, $X=Y+Z$ and $Y$ is of finite variation with natural drift $b$. Let $( \lambda_n)_{n\in\N}$ be an independent uniform stick-breaking process on $[0,T]$, defined recursively in terms of an i.i.d. $\U(0,1)$ sequence $(U_n)_{n\in\N}$ as follows: $L_0\coloneqq T$, $ L_n\coloneqq U_nL_{n-1}$ and $\lambda_n\coloneqq L_{n-1}- L_n$ for $n\in\N$. For $n\ge1$ define $\zeta_n^X\coloneqq X_{L_n}-X_{L_{n+1}}$, $\zeta_n^Y\coloneqq Y_{L_n}-Y_{L_{n+1}}$ and $\zeta_n^Z\coloneqq Z_{L_n}-Z_{L_{n+1}}$. By~\cite[Thm~11]{CM_Fluctuation_Levy}, the convex minorant of $X$ (resp. $Y$; $Z$) has the same law as the unique piecewise linear convex function with faces $((\lambda_n,\zeta^X_n)\,:\,n\in\N)$ (resp. $((\lambda_n,\zeta^Y_n)\,:\,n\in\N)$; $((\lambda_n,\zeta^Z_n)\,:\,n\in\N)$). In particular, the sets of slopes $\mS_X$, $\mS_Y$ and $\mS_Z$ have the same law as the sets $\{\zeta^X_n/\lambda_n:n\in\N\}$, $\{\zeta^Y_n/\lambda_n:n\in\N\}$ and $\{\zeta^Z_n/\lambda_n:n\in\N\}$, respectively, and hence share their limit sets. These limit sets must be constant a.s. by Theorem~\ref{thm:CM_local_smoothness}. In particular, $\mL(\{\zeta^Y_n/\lambda_n:n\in\N\})=\{b\}$ a.s. by Proposition~\ref{prop:fin_var_criteria}.
The result now follows from the fact that, for any deterministic sequences $(y_n)_{n\in\N}$ and $(z_n)_{n\in\N}$ with $\lim_{n\to\infty}y_n=b$, we have $\mL(\{y_n+z_n:n\in\N\})=\mL(\{z_n:n\in\N\})+b$. 
\end{proof}

\begin{proof}[Proof of Proposition~\ref{prop:index_criteria}] 
Our assumption implies that $\s_1$ is finite and uniformly bounded. Indeed, by Lemma~\ref{lem:LevyBounds}(a), we obtain
\[
2\pi\s_1(r)=
\int_{\R}\Re\frac{1}{1+iru-\psi(u)}\D u
\le\int_{\R}\frac{1}{1-\Re\psi(u)}\D u
\le\int_{\R}\frac{3}{1+u^2(\sigma^2+\ov\sigma^2(1/|u|))}\D u<\infty.
\]
Thus, Theorem~\ref{thm:CM_local_smoothness} and~\eqref{eq:vigon_identity} imply $\mL(\mS)=\emptyset$.

It remains to show that the assumption holds if $\sigma^2>0$ or $\beta_->1$. If $\sigma^2>0$ (resp. $\beta_->1$) fix some $\alpha\in(1,2)$ (resp. $\alpha\in(1,\beta_-)$) and note that, by the definition of $\beta_-$, there exists some $K>0$ such that $ u^2(\sigma^2+\ov\sigma^2(|u|^{-1}))\geq K|u|^{\alpha}$ for all $u\in\R\setminus(-1,1)$. Hence, we have 
\[
\int_1^\infty\frac{\D u}{1+u^2(\sigma^2+\ov\sigma^2(1/u))}
\le\int_1^\infty\frac{\D u}{1+Ku^{\alpha}}<\infty.
\qedhere
\] 
\end{proof}

\begin{proof}[Proof of Proposition~\ref{prop:asymmetry}]
By Proposition~\ref{prop:fin_var_perturbation}, we may assume without loss of generality that $x_0=1$, $\nu(\R\setminus(-1,1))=0$ and $\gamma=0$. Decompose $X=Y+Z$ where the L\'evy processes $Y$ and $Z$ are independent of each other and have generating triplets $(0,0,\nu|_{(0,1)})$ and $(0,0,\nu|_{(-1,0)})$, respectively. Let $\psi_Y$ and $\psi_Z$ be the characteristic exponents of $Y$ and $Z$, respectively. Note that $\psi=\psi_Y+\psi_Z$ and recall that the functions $\Re\psi$, $\Re\psi_Y$ and $\Re\psi_Z$ are even while $\Im\psi$, $\Im\psi_Y$ and $\Im\psi_Z$ are odd. The idea is to bound the function $\s_1(r)$ for $X$ uniformly over compact sets by the corresponding function for $Y$ (note that $Y$ is of infinite variation and creeps, making it abrupt by~\cite[Ex.~1.5]{MR1947963}). 

The assumption implies that $\int_{(0,1)}f(x)\nu(\D x)\ge c\int_{(-1,0)}f(-x)\nu(\D x)\ge 0$ for any measurable function $f:(0,1)\to[0,\infty)$. Thus, the following inequalities hold for all $u>0$:
\begin{equation}
\label{eq:asymmetry1}
|\Re\psi_Y(u)|\le|\Re\psi(u)|\le (1+1/c)|\Re\psi_Y(u)|,
\quad\text{and}\quad
\Im\psi(-u)\ge (1-1/c)\Im\psi_Y(-u)>0.
\end{equation}
Fubini's theorem and the infinite variation of $Y$ imply  $\int_{(0,1)}\nu([x,1))\D x
=\int_{(0,1)}y\nu(\D y)=\infty$. Moreover, for any $u>0$, Fubini's theorem yields
\begin{align*}
\frac{\Im\psi_Y(-u)}{u}
&=\frac{1}{u}\int_{(0,1)}(ux-\sin(ux))\nu(\D x)
\ge\frac{1}{u}\int_{(1/u,1)}(ux-1)\nu(\D x)\\
&=\int_{(1/u,1)}\int_{(1/u,x]}\D y\nu(\D x)
=\int_{(1/u,1)}\nu([y,1))\D y
\xrightarrow[u\to\infty]{}\infty.
\end{align*}
Fix any $R>0$ and let $M>0$ satisfy $\tfrac{1}{2}(1-1/c)|\Im\psi_Y(u)|\ge |u|R$ for all $|u|\ge M$. Then, by~\eqref{eq:asymmetry1}, for all $|u|\ge M$ and $|r|\le R$, we have
\[
|ur-\Im\psi(u)|
\ge |\Im\psi(u)|-|ur|
\ge (1-1/c)|\Im\psi_Y(u)|-|ur|
\ge \tfrac{1}{2}(1-1/c)|\Im\psi_Y(u)|.
\]
Hence~\eqref{eq:asymmetry1} gives $|1+iur-\psi(u)|\ge \tfrac{1}{2}(1-1/c)|1-\psi_Y(u)|$ for all $|u|\ge M$ and $|r|\le R$. Another application of~\eqref{eq:asymmetry1} then gives, for all $u\in\R$ and $r\in[-R,R]$, 
\[
\frac{1-\Re\psi(u)}{|1-iur-\psi(u)|^2}
\le 
\1_{\{|u|<M\}}
+
\1_{\{|u|\ge M\}}\frac{1+1/c}{(1-1/c)^2/4}
    \cdot\frac{1-\Re\psi_Y(u)}{|1-\psi_Y(u)|^2}.
\]
Since $Y$ is abrupt, the right-hand side of the display above is integrable over $(u,r)\in\R\times[-R,R]$ by Theorem~\ref{thm:CM_local_smoothness} and~\eqref{eq:vigon_identity}. Thus, $\s_1(r)$ is finite, uniformly bounded and integrable on $r\in[-R,R]$. Since $R>0$ was arbitrary, $X$ is abrupt.
\end{proof}

\begin{proof}[Proof of Theorem~\ref{thm:zeta_criterion}]
For the proofs of Parts (i) and (ii), we adopt the arguments given in~\cite{VigonConjecture}.\\
Part (i). Assume that there exists some $k\in (0,\infty)$ such that $(1+|\psi(u)|)/u\leq k$ for all $u\ge 1$. Recall that $\Re\psi(u)\leq 0$, and note that 
\begin{align*}
\int_1^\infty \Re \frac{1}{1+iur-\psi(u)}\D u 
&=\int_1^\infty \frac{1-\Re\psi(u)}{|1+iur-\psi(u)|^2}\D u \\
&\geq \int_1^\infty \frac{|\Re\psi(u)|}{(1+|iur|+|\psi(u)|)^2}\D u
\geq \frac{1}{(k+r)^2}\int_1^\infty \frac{|\Re\psi(u)|}{u^2}\D u.
\end{align*} 
Since $X$ has infinite variation the right hand side is always infinite by~\cite[Prop.~1.5.3]{vigon:tel-00567466}. Hence $\s_1(r)=\infty$ for all $r$, implying the claim. 

Part (ii). Suppose that $\liminf_{u \to \infty}|\psi(u)/u|=\infty$. It suffices to show that, if $\s_1(r_0)<\infty$ for some $r_0\in\R$, then $\sup_{r\in [r_0-R,r_0+R]}\s_1(r)<\infty$ for any $R>0$. Indeed, this would imply that either $\s_1(r)=\infty$ for all $r$ (making $X$ strongly eroded) or $\s_1(r)$ is bounded uniformly on compact sets (making $X$ abrupt). Suppose $\s_1(r_0)<\infty$ for some $r_0$ and fix $R>0$. By assumption, there exists some $M\ge 1$ such that $|\psi(u)/u|\ge 2R+3(|r_0|+1)$ for all $|u|\ge M$. Thus, for $|u|\ge M$ and $|r-r_0|\le R$, we have the inequalities $|\psi(u)|\ge 3|1+ir_0u|+2R|u|\ge 2|1+iru|+|1+ir_0u|$, and hence
\[
|1+ir_0u-\psi(u)|
\le |1+ir_0u|+|\psi(u)|
\le 2(|\psi(u)|-|1+iur|)
\le 2|1+iur-\psi(u)|.
\]
Thus, for any $r \in [r_0-R,r_0+R]$, we have
\[
\int_0^\infty \Re\frac{1}{1+iru-\psi(u)}\D u
\leq \int_0^M \Re\frac{1}{1+iru-\psi(u)}\D u
+2\int_M^\infty \Re\frac{1}{1+ir_0u-\psi(u)}\D u,
\]
implying $\s_1(r)\le M/\pi + 2\s_1(r_0)$ since $\Re(1/(1-\psi(u)))\le 1$ for all $r,u\in\R$. 

It remains to prove parts (ii-a)--(ii-c). By Part~(ii), $\s_1(r)$ is either everywhere finite and locally integrable, or $\s_1(r)=\infty$ for all $r$. Thus, in the remainder of the proof it suffices to check if $\s_1(0)<\infty$. Since $\Re(1/(1-\psi(u)))\le 1$  is locally integrable, $\Im\psi$ is an odd function and $\Re\psi$ is an even function, the finiteness of $\s_1(0)$ depends only on that of the following integral: 
\begin{align}
\label{eq:s_qintegrable_reduced}
    \int_1^\infty \frac{1-\Re \psi(u)}{(1-\Re\psi(u))^2+\Im\psi(u)^2}\D u.
\end{align}

Part (ii-a). Assume that $\lim_{|u| \to \infty}|\Re\psi(u)/u|=\infty$.  Since $\Re\psi(u)$ is super-linear we know that the denominator of the integrand in~\eqref{eq:s_qintegrable_reduced} is asymptotically equivalent to $|\Re\psi(u)|^2+|\Im\psi(u)|^2=|\psi(u)|^2$. Similarly the numerator of the integrand in~\eqref{eq:s_qintegrable_reduced} is asymptotically equivalent to $|\Re\psi(u)|$. Hence the integral in~\eqref{eq:s_qintegrable_reduced} is infinite if and only if $\int_1^\infty \Re(1/(1-\psi(u)))\D u=\infty$.

Part (ii-b). Assume now that the upper and lower limits of $|\Re\psi(u)/u|$ as $|u|\to \infty$ lie in $(0,\infty)$ and that $\lim_{|u|\to \infty}|\Im\psi(u)/u|=\infty$. In this case the denominator of the integrand in~\eqref{eq:s_qintegrable_reduced} is asymptotically equivalent to $|\Im\psi(u)|^2$ and the numerator of~\eqref{eq:s_qintegrable_reduced} is asymptotically sandwiched between multiples of $|u|$. Hence the integral in~\eqref{eq:s_qintegrable_reduced} is infinite if and only if $\int_1^\infty u/(1+|\Im\psi(u)|^2)^{-1}\D u=\infty$. 

Part (ii-c) Assume now $\lim_{|u|\to \infty}|\Re\psi(u)/u|=0$ and that $\lim_{|u|\to \infty}|\Im\psi(u)/u|=\infty$. In this case the denominator of the integrand in~\eqref{eq:s_qintegrable_reduced} is asymptotically equivalent to $|\Im\psi(u)|^2$. Hence the integral in~\eqref{eq:s_qintegrable_reduced} is infinite if and only if $\int_1^\infty (1-\Re \psi(u))(1+|\Im\psi(u)|^2)^{-1}\D u=\infty$. 
\end{proof}

\subsection{Infinite time horizon -- proofs} 
\label{subsec:proofs_infinity_horizon}

\begin{proof}[Proof of Proposition~\ref{prop:C'_infinity}]
Let $\Xi=\sum_{n\in\N}\delta_{(\ell_n,\xi_n)}$ be a Poisson point process with mean measure given by $\mu(\D t,\D x) =\1_{\{x/t<l\}}t^{-1}\p(X_t\in\D x)\D t$. By~\cite[Cor.~3]{MR2978134} and the convexity of $C_\infty$, the result will follow if we show that $\Xi(\{(t,x)\,:\, c\le x/t<l,\, t\ge 1\})=\infty$ a.s. for any $c<l$. Since $\Xi$ is Poisson, it suffices to show that its mean is infinite. To that end, we will prove that $\mu(\{(t,x)\,:\, x/t<c,\, t\ge 1\})<\infty$ and $\mu(\{(t,x)\,:\, x/t<l,\, t\ge 1\})=\infty$. 

Fix any $c<l$ and define the L\'evy process $X^{(c)}=(X^{(c)}_t)_{t\ge 0}\coloneqq (X_t-ct)_{t\ge 0}$. Since $\lim_{t\to\infty}X_t^{(c)}=\infty$ a.s. (by definition of $l$), Rogozin's criterion~\cite[Thm~48.1]{MR3185174} yields 
\[
\mu(\{(t,x) \,:\, x/t<c,\, t\ge 1\})
=\int_1^\infty \p\big(X_t^{(c)}<0\big)\frac{\D t}{t}<\infty.
\]
It remains to establish that $\mu(\{(t,x)\, :\, x/t<l,\, t\ge 1\})=\infty$. If we assume that $l=\infty$ then we have $\mu(\{(t,x) \,:\, x/t<l,\, t\ge 1\})=\int_1^\infty t^{-1}\D t=\infty$. Assume instead that $l<\infty$ and let $X^{(l)}$ be as before with $c=l$. In this case $\e X_1^{(l)}=0$, making $X^{(l)}$ recurrent by~\cite[Rem.~37.9]{MR3185174}, so the event $\{\liminf_{t\to\infty}X_t^{(l)}=\infty\}$ has probability $0$. Hence, Rogozin's criterion~\cite[Thm~48.1]{MR3185174} yields 
\[
\mu(\{(t,x) \,:\, x/t<l,\, t\ge 1\})
=\int_1^\infty \p\big(X_t^{(l)}<0\big)\frac{\D t}{t}=\infty.
\qedhere
\]
\end{proof}

\begin{proof}[Proof of Proposition~\ref{prop:S'_infinity}]
Fix any $s<l$ and pick an arbitrary  $c\in(s,l)$. By Proposition~\ref{prop:C'_infinity}, the random time $\tau\coloneqq\inf\{t>0:C'_\infty(t)>c\}$ is finite a.s. Note that $X_t\ge C'_\infty(t\wedge\tau)+c(t\vee\tau-\tau)$ for all $t\ge 0$, where $x\wedge y\coloneqq \min\{x,y\}$. Since the latter is a convex function, the maximality of convex minorants implies that the convex minorant $\wt C$ of $X$ on the time interval $[0,\tau+1]$ is equal to $C_\infty$ on the interval $[0,\tau]$ and that all the faces of either $\wt C$ or $C_\infty$ with slope smaller than $c$ lie on the interval $[0,\tau]$. Since the set of slopes $\wt\mS$ of the faces of $\wt C$ have the same left and right accumulation points as $\mS$ a.s.,~\eqref{eq:S_and_S_infty} follows.
\end{proof}

\section{Concluding remarks}
\label{sec:comments}

The probabilistic arguments used in the proofs of Theorem~\ref{thm:CM_local_smoothness} (see Section~\ref{sec:0-1lawsbres}) and Proposition~\ref{prop:fin_var_perturbation} (see Section~\ref{subsec:infinite_var_proofs}) strongly suggest that ``frequent'' visits of the process $(X_t/t)_{t\in(0,1]}$ to bounded intervals as $t\da 0$ play a major role in $X$ being strongly eroded. The \emph{time spent} during such visits, and not the number of visits, appears to be the key quantity for the following reasons.\\ 
(I) The integral $\int_0^1\p(X_t/t\in I)t^{-1}\D t$ in Theorem~\ref{thm:CM_local_smoothness}, which needs to be infinite if $X$ is to be strongly eroded, is equal to the mean of the (weighted) occupation measure $\mathcal{T}(I)\coloneqq\int_0^1\1_I(X_t/t)t^{-1}\D t$ of the interval $I$ corresponding to the process $(X_t/t)_{t\in(0,1]}$.\\
(II) For any abrupt process $X$, the process $(X_t/t)_{t\in(0,\ve]}$ visits every bounded interval infinitely many times for every $\ve>0$. Indeed, since $\s_1$ is locally integrable, $\s_1(r)$ is finite for a.e. $r\in\R$. Moreover, if $\s_1(r)<\infty$, then $0$ is regular for itself for the process $(X_t-rt)_{t\ge 0}$ and hence $(X_t/t)_{t\in(0,\ve]}$ visits $r$ infinitely often for every $\ve>0$. These visits, however, are brief since $X$ is abrupt and thus $\e\mathcal{T}(I)<\infty$ for all bounded intervals $I$.

In the finite variation case, our ability to obtain a complete picture of how and where smoothness of the derivative $C'$ fails is due to the fact that, for every open interval $I$, the process $(X_t/t)_{t\in(0,\ve]}$ spends all of the (resp. no) time in $I$ for all sufficiently small $\ve>0$ if the limit $\lim_{t\da0}X_t/t$ lies inside (resp. outside) of $I$. In order to establish Conjectures~\ref{conj:dichotomy} and~\ref{conj:vigon}, we would need a better understanding (in the infinite variation case) of how much time the process $(X_t/t)_{t\in(0,1]}$ spends on any bounded interval. Such a result would allow us to apply Theorem~\ref{thm:0-1_law_formula} above to obtain the conjectured dichotomy.
However, a result of this type appears to be delicate because the jumps of  $(X_t/t)_{t\in(0,\ve]}$ visit all bounded intervals infinitely many times for all $\ve>0$ whenever the positive and negative jumps of $X$ both have infinite variation. (Recall that if $\int_{(-1,0)}|x|\nu(\D x)<\infty$ or $\int_{(0,1)}x\nu(\D x)<\infty$, then the process $X$ creeps and is therefore abrupt.) Indeed, let $\Delta_t\coloneqq X_t-X_{t-}$ denote the jump of $X$ at time $t>0$ and let $\Xi=\sum_{\Delta_t\ne 0}\delta_{(t,\Delta_t)}$ be the Poisson measure on $(0,\infty)\times(\R\setminus\{0\})$ of the jumps of $X$ with mean measure $\Leb\otimes\nu$. For any $\ve>0$, the Poisson variable $\mathscr{T}(I)\coloneqq\sum_{t\in(0,\ve]}\1_I(\Delta_t/t)=\int_{(0,\ve]\times(\R\setminus\{0\})}\1_I(x/t)\Xi(\D t,\D x)$ is a.s. infinite for any interval $I=[a,b)\subset(0,\infty)$ since its mean is infinite: by Campbell's formula,
\[
\e\mathscr{T}([a,b))
=\int_{(0,\infty)}\int_0^\ve
    \1_{[a,b)}(x/t)\D t\nu(\D x)
\ge\big(\tfrac{1}{a}-\tfrac{1}{b}\big)\int_{(0,a\ve]}x\nu(\D x)
=\infty.
\]

Theorem~\ref{thm:CM_local_smoothness} can be rephrased as follows: for a given interval $I$, we have $|\mS\cap I|=\infty$ a.s. if and only if $\e\mathcal{T}(I)=\infty$ where we recall $\mathcal{T}(I)=\int_0^1\1_I(X_t/t)t^{-1}\D t$. In light of Theorem~\ref{thm:0-1_law_formula}, it is natural to speculate that something stronger is true, namely, $\e\mathcal{T}(I)=\infty$ if and only if $\mathcal{T}(I)=\infty$ a.s. We make the final observation that this occupation measure equals the total time $X_t/t$ spends in $I$ under an exponential change of variable: $\mathcal{T}(I)=\int_0^\infty\1_I(X_{e^{-u}}/e^{-u})\D u$. This emphasis on the time spent by $X_t/t$ over exponentially small times is in line with the geometric decay of the length of the sticks in the stick-breaking representation for the convex minorant $C$, the main tool in proving Theorem~\ref{thm:CM_local_smoothness}.


\appendix
\section{Vertices, slopes and derivatives of piecewise linear convex functions}
\label{sec:pw-cf}

A point $x\in\R$ is an \textit{accumulation} (or \textit{limit}) \textit{point} of 
a set $\mathcal{A}\subset \R$
if every neighborhood of $x$ in $\R$ intersects $\mathcal{A}\setminus\{x\}$. Denote by $\mL(\mathcal{A})$ the set of all accumulation points in $\R$
of the set $\mathcal{A}$.
A point $x\in\R$ is a \emph{right-accumulation}  (or \emph{right-limit}) \emph{point} of $\mathcal{A}$ if every neighborhood of $x$ in $\R$ intersects $\mathcal{A}\cap(x,\infty)$. Denote by $\mL^+(\mathcal{A})$ the set of all right-accumulation points in $\R$
of the set $\mathcal{A}$. A set of \emph{left-accumulation}  (or \emph{left-limit}) \emph{points} of $\mathcal{A}$, denoted by $\mL^-(\mathcal{A})$, is defined analogously. Note that 
$\mL(\mathcal{A})=\mL^+(\mathcal{A})\cup\mL^-(\mathcal{A})$ with the intersection $\mL^+(\mathcal{A})\cap\mL^-(\mathcal{A})$ consisting of points in $\R$ that are limits
of a strictly decreasing and a strictly increasing sequence of elements in $\mathcal{A}$. Moreover, 
the closure $\ov{\mathcal{A}}$ of $\mathcal{A}$ in $\R$  equals $\mathcal{A}\cup\mL(\mathcal{A})$. 

Let $C:[0,T]\to\R$ be a piecewise linear convex function on a bounded interval. Let $\mS$ be the set of the slopes of the linear segments of $C$ and
$\{I_r:r\in\mS\}$ the family of maximal open intervals of constancy of the right-continuous derivative $C'$ of $C$. Denote  by $V:=\bigcup_{r\in\mS} \partial I_r$
a subset of $[0,T]$ consisting of all the boundary points of the intervals of constancy of $C'$. Both $\mS$ and $V$ are countable sets. Table~\ref{tab:dictionary_C'} describes all possible behaviours of the derivative $C'$.

\begin{table}[ht]
\begin{tabular}{|T|M|L|} 
	\hline
	Times $V$ 
	& Slopes $\mS$
	& Derivative $C'$  \\
	\hline
	 $v\in [0,T] \setminus \ov{V}$ 
	 & $C'(v) \in \mS$ 
	 & $C'$ is constant on a neighbourhood of $v$ \\
	 \hline 
	 $v \in V\setminus \mL(V)$ 
	 & $C'(v) \in \mS\setminus \mL(\mS)$ 
	 & $C'$ is equal to a constant on $(v-\ve,v)$ and a different constant on $[v,v+\ve)$ for some $\ve>0$ \\
	 \hline
	 \multirow{2}{3cm}{\centering \phantom{mmmmmmmmm} $v \in \mL^+(V)\setminus \mL^-(V)$ (thus $v \in V\cap[0,T)$)}
	 & $C'(v) \in \mL^+(\mS)\cap \mS$ 
	 & $C'$ is continuous at $v$; $C'(v)<C'(v+\delta)$ for all $\delta\in(0,T-v)$; $C'$ is constant on $(v-\ve,v]$ for some $\ve>0$ \\ \cline{2-3}
	 & $C'(v) \in \mL^+(\mS)\setminus \mS$ 
	 & $C'(v)<C'(v+\delta)$ for all $\delta\in(0,T-v)$;
	 if $v>0$, $C'(v)>C'(v-)$ and  $C'$ constant on $(v-\ve,v)$ for some $\ve>0$;    \\ \hline
	 \multirow{2}{3cm}{\centering 
	 \phantom{mmmmmmmmm}
	 $v \in \mL^-(V)\setminus \mL^+(V)$ (thus $v \in V\cap(0,T]$)} 
	 & $C'(v-) \in \mL^-(\mS)\cap \mS$ 
	 & $C'$ is continuous at $v$, $C'(v-\ve)<C'(v-)$ for any $\ve>0$ and $C'$ is constant on $[v,v+\ve)$ for some $\ve>0$ \\ \cline{2-3}
	 & $C'(v-) \in \mL^-(\mS)\setminus\mS$ 
	 & $C'(v-\ve)<C'(v-)$ for any $\ve>0$ and, if $v \ne T$, $C'(v)>C'(v-)$ with $C'$ constant on $[v,v+\ve)$ for some $\ve>0$ \\ \hline
	  \multirow{2}{3cm}{\centering
	 $v \in \mL^-(V) \cap \mL^+(V)$ (thus $v \notin V$)} 
	 & $C'(v) \in \mL^+(\mS) \setminus \mL^-(\mS)$ (and $C'(v) \notin \mS$) 
	 & $C'$ is discontinuous at $v$ with $C'(v-)<C'(v)<C'(v+\ve)$ for any $\ve>0$\\ \cline{2-3}
	 & $C'(v) \in \mL^-(\mS) \cap \mL^+(\mS)$ (and $C'(v) \notin \mS$) 
	 & $C'$ is continuous at $v$ with $C'(v-\ve)<C'(v)<C'(v+\ve)$ for any $\ve>0$\\\hline
\end{tabular}
\caption{The behaviours of $C'$ and of the sets of times $V$ and slopes $\mS$ of 
an arbitrary piecewise linear function $C:[0,T]\to\R$. The table exhausts all possibilities.
Recall that $C'$ is non-decreasing and right-continuous on $(0,T)$, 
defined by its limits on $\{0,T\}$,
$C'(0):=\lim_{t\downarrow0}C'(t)\in[-\infty,\infty)$
and 
$C'(T):=\lim_{t\ua0}C'(t)\in(-\infty,\infty]$,
and has left-limits $C'(v-):=\lim_{t\ua v} C'(t)$ for all $v\in(0,T]$.}
\label{tab:dictionary_C'}
\end{table}

\section{Characteristic exponent, infinite variation and the integrability of \texorpdfstring{$\s_p(r)$}{s}}
\label{sec:s_q}

\subsection{Integrability of \texorpdfstring{$\s_p(r)$}{s}}

Throughout the remainder of the section we assume $X$ to have infinite activity. Recall that $\psi$ is the characteristic exponent of $X$, satisfying $\exp(\psi(u))=\log\e[\exp(iuX_1)]$ for $u\in\R$. Define for any $p>0$ and $r\in\R$, 
\begin{equation*}
\s_p(r) 
:= \frac{1}{2\pi}
\int_\R\Re\frac{1}{p+iur-\psi(u)}\D u.
\end{equation*} 
The relation $\s_p(r)\in(0,\infty]$ holds since, by $\Re\psi(u)\le 0$, the integrand in the definition of $\s_p(r)$ is positive for all $u\in\R$. Define for any $q>p>0$ the measures $\mu_p$ and $\mu_{p,q}$ given by 
\[
\mu_p(A)
\coloneqq\int_0^\infty\p(X_t/t\in A)e^{-pt}\frac{\D t}{t},
\qquad
\mu_{p,q}(A)
\coloneqq\int_0^\infty\p(X_t/t\in A)
    (e^{-pt}-e^{-qt})\frac{\D t}{t},
\]
for any measurable $A\subset\R$. We note here that both measures are diffuse since the law of $X_t/t$ is diffuse by Doeblin's lemma~\cite[Lem.~15.22]{MR1876169}. Moreover, $\mu_{p,q}(\R)<\infty$ for any finite $q>p>0$ since $t\mapsto (e^{-pt}-e^{-qt})/t$ is integrable on $(0,\infty)$, while clearly $\mu_p(\R)=\infty$
for any L\'evy process $X$. 
In fact, by Theorem~\ref{thm:CM_global_smoothness}, $X$ is strongly eroded if and only if $\mu_p(I)=\infty$ for all bounded intervals $I$ in $\R$.

\begin{rem}\label{rem:Vigon}
\nf{(i)} The measure $\mu_p(A)$ is equal to the expected value of $|\mS\cap A|$ when the time horizon $T$ is an independent exponential random variable with mean $1/p$.\\
\nf{(ii)} For any $q>0$ and $z,w\in\Complex$ with $\Re z,\Re w\ge 0$ and $q\ge |z-w|$ we have $\Re(1/(q+z))\le 8\Re(1/(q+w))$. Indeed, the inequality is
equivalent to $(q+\Re z)|q+w|^2
\le
2(q+\Re w)\cdot (2|q+z|)^2$, which follows from $|q+w|
=|q+z + (w-z)|
\le |q+z| + q
\le 2|q+z|$ and 
$q+\Re z
=q+\Re w + \Re(z-w)
\le 2(q+\Re w)$. Thus $(1/8)\s_q(r)\le \s_{p+q}(r)\le 8\s_q(r)$ for any $q\ge p>0$ and $r\in\R$, implying that the finiteness and local integrability of $\s_p$ do not dependent on $p\in(0,\infty)$.
\end{rem}

For L\'evy processes with bounded jumps, Theorem~\ref{thm:Vigon} below was established in~\cite[Thm~1.5]{VigonConjecture}. We extend this result to all infinite activity L\'evy processes. Our proof follows the same strategy as the one in~\cite{VigonConjecture} but is  shorter and has the advantage of being almost completely elementary, requiring only basic facts about Fourier inversion and Brownian motion. The key step in~\cite{VigonConjecture}, relying heavily on the fluctuation theory of L\'evy processes, is replaced by a simple Gaussian perturbation of the L\'evy process. Moreover, almost no potential theory is used in our proof. More specifically, we apply~\cite[Thm~2.6]{MR889378}  
only once to show that $\lim_{q\to\infty}\s_q(r)=0$ whenever $\s_p(r)<\infty$ for some $p>0$.

\begin{thm}
\label{thm:Vigon}
Suppose $X$ has infinite activity. Then for any $p\in(0,\infty)$ and $-\infty\le a<b\le\infty$, we have
\begin{equation}
\label{eq:Vigon}
\int_a^b\s_p(r)\D r = \mu_p((a,b)).
\end{equation}
\end{thm}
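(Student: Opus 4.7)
My plan follows the strategy of~\cite{VigonConjecture}: regularise $X$ via an independent Brownian motion $B$, prove~\eqref{eq:Vigon} for the smoothed process, and pass to the limit in the smoothing parameter. Fix $\ve>0$ and set $Y\coloneqq X+\ve B$. Its characteristic exponent $\psi^Y(u)=\psi(u)-\ve^2u^2/2$ satisfies $|p+iur-\psi^Y(u)|\ge p-\Re\psi(u)+\ve^2u^2/2\ge \ve^2u^2/2$, so the integrand in the definition of $\s_p^Y$ decays as $u^{-2}$ at infinity, and the law of $Y_t$ admits a bounded continuous density $f_t^Y$ (from the Gaussian convolution) for every $t>0$.

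For the perturbed process $Y$, the first step is to verify~\eqref{eq:Vigon} by direct calculation. Writing $1/(p+iur-\psi^Y(u))=\int_0^\infty e^{-(p+iur-\psi^Y(u))t}\,\D t$ and using the Gaussian decay to justify Fubini's theorem, I obtain $\s_p^Y(r)=\int_0^\infty e^{-pt}f_t^Y(rt)\,\D t$ by Fourier inversion of $u\mapsto\e[e^{iuY_t}]$. Integrating over $r\in(a,b)$ and changing variables $x=rt$ then gives
$$\int_a^b\s_p^Y(r)\,\D r=\int_0^\infty e^{-pt}\p(Y_t/t\in(a,b))\frac{\D t}{t}=\mu_p^Y((a,b)),$$
establishing the identity for every $Y^{(\ve)}$ with $\ve>0$. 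To treat a general infinite-activity $X$, I would then pass to the limit $\ve\downarrow 0$. Since $X_t/t$ has diffuse law (by Doeblin's lemma~\cite[Lem.~15.22]{MR1876169}, as $X$ is not compound Poisson with drift) and $Y_t^{(\ve)}/t\to X_t/t$ a.s., one has the pointwise convergence $\p(Y_t^{(\ve)}/t\in(a,b))\to\p(X_t/t\in(a,b))$ in $t>0$; Fatou applied to the $u$-integral in $\s_p^{Y^{(\ve)}}(r)$ also gives $\liminf_{\ve\downarrow 0}\s_p^{Y^{(\ve)}}(r)\ge\s_p(r)$.

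The hard part will be justifying the limit $\ve\downarrow 0$ rigorously on both sides, since the non-integrability of $e^{-pt}/t$ near zero forbids direct dominated convergence on the right-hand side. A Fatou argument yields the lower bounds $\int_a^b\s_p(r)\,\D r\le\liminf_{\ve\downarrow 0}\int_a^b\s_p^{Y^{(\ve)}}(r)\,\D r$ and $\mu_p((a,b))\le\liminf_{\ve\downarrow 0}\mu_p^{Y^{(\ve)}}((a,b))$. For the matching upper bounds I would fix $\delta>0$ and split the $t$-integral at the threshold $t_0(\ve,\delta)\coloneqq\ve^2/\delta^2$, bounding the contribution from $t<t_0$ using the density estimate $\p(Y_t^{(\ve)}/t\in(a,b))/t\le (b-a)/(\ve\sqrt{2\pi t})$ and the contribution from $t>t_0$ via the interval comparison $\p(Y_t^{(\ve)}/t\in(a,b))\le\p(X_t/t\in(a-\delta,b+\delta))+2e^{-t\delta^2/(2\ve^2)}$ combined with Gaussian tail bounds. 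Sending first $\ve\downarrow 0$ and then $\delta\downarrow 0$, and exploiting the regularity of $\mu_p$ at the endpoints (guaranteed by the diffuse law of $X_t/t$), would yield the matching $\limsup\le$ inequalities on both sides and close the proof.
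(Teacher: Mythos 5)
Your first two steps are sound: the representation $\s_p^{Y^{(\ve)}}(r)=\int_0^\infty e^{-pt}f_t^{Y^{(\ve)}}(rt)\,\D t$ via Fourier inversion (Fubini is justified by the Gaussian decay of the characteristic function), the change of variables giving $\int_a^b\s_p^{Y^{(\ve)}}(r)\,\D r=\mu_p^{Y^{(\ve)}}((a,b))$, and both Fatou lower bounds are all correct. The gap is in the closing step. The contribution to $\mu_p^{Y^{(\ve)}}((a,b))$ from small $t$ \emph{does not vanish} as $\ve\downarrow 0$, and no choice of $t_0$ fixes this. With your $t_0=\ve^2/\delta^2$, the density bound gives $\int_0^{t_0}(b-a)/(\ve\sqrt{2\pi t})\,\D t=2(b-a)/(\delta\sqrt{2\pi})$, which is $O(1/\delta)$ uniformly in $\ve$ and blows up as $\delta\downarrow 0$; simultaneously the tail term is $\int_{t_0}^\infty e^{-t\delta^2/(2\ve^2)}\,\D t/t=\int_{1/2}^\infty e^{-s}\,\D s/s$, a constant. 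Shrinking $t_0$ to kill the first term pushes the lower limit of the second integral towards $0$, where $\int e^{-s}\,\D s/s$ diverges, so the two effects cannot both be made small. This is not just a slack estimate: for a symmetric $\alpha$-stable $X$ with $\alpha<1$ and an interval $(a,b)$ not containing the drift $0$, the quantity $\int_0^{\ve^2}\p(Y_t^{(\ve)}/t\in(a,b))\,\D t/t$ is bounded away from zero uniformly in $\ve$ (after substituting $s=t/\ve^2$ the Gaussian piece is $\ve$-independent), so $\lim_{\ve\downarrow 0}\mu_p^{Y^{(\ve)}}((a,b))$ strictly exceeds $\mu_p((a,b))$, and your two Fatou inequalities can never combine to an equality.

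The paper's proof avoids this by never trying to pass to the limit in the full identity. Proposition~\ref{prop:Vigon}(a) proves the analogue with $e^{-pt}/t$ replaced by $(e^{-pt}-e^{-qt})/t$, which is \emph{integrable at $t=0$}, so dominated convergence applies directly to the right-hand side; this yields $\int_a^b(\s_p-\s_q)\,\D r=\mu_{p,q}((a,b))$ whenever $\s_p\in L^1(a,b)$. The Brownian perturbation is used only in Proposition~\ref{prop:Vigon}(b), and there only to prove \emph{boundedness} (not convergence) of the perturbed right-hand side, in order to derive a contradiction with the divergence of the left-hand side; that argument is much weaker than the $\limsup$ bounds you need. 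Finally, the identity is recovered by letting $q\to\infty$, using the potential-theoretic input that $\s_q(r)\to 0$ as $q\to\infty$ whenever $\s_p(r)<\infty$ (\cite[Thm~2.6]{MR889378}), which is the key external ingredient your sketch has no substitute for.
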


The equivalence~\eqref{eq:vigon_identity} is immediate from Theorem~\ref{thm:Vigon}. The proof of Proposition~\ref{prop:Vigon} below is elementary, requiring no knowledge of potential theory for L\'evy processes. Theorem~\ref{thm:Vigon} follows easily from Proposition~\ref{prop:Vigon} and~\cite{MR889378} as we will see below. Note that the isolated application of~\cite[Thm~2.6]{MR889378} is the only time potential theory is used in the proof of Theorem~\ref{thm:Vigon}. Moreover, fluctuation identities are not used in the proof of Theorem~\ref{thm:Vigon}, which is what one would expect since identity~\ref{eq:Vigon} involves only the marginal laws of $X$.

\begin{prop}
\label{prop:Vigon}
Suppose $X$ has infinite activity. For $p>0$ and $a,b\in\R$ with $a<b$ we have:\\
\nf{(a)} if $\int_a^b\s_p(r)\D r<\infty$, then for any $q\in[p,\infty)$ we have
\begin{equation}
\label{eq:Vigon2}
\int_a^b (\s_p(r)-\s_q(r))\D r
=\mu_{p,q}((a,b));
\end{equation}
\nf{(b)} if $\mu_p((a,b))<\infty$, then $\int_{a+\ve}^{b-\ve}\s_p(r)\D r<\infty$ for every $\ve\in(0,(b-a)/2)$.
\end{prop}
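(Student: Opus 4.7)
The plan is to reduce both parts to the Plancherel-type identity
\begin{equation*}
\int_\R \phi(r)(\s_p(r)-\s_q(r))\D r \;=\; \int_\R \phi(r)\,\mu_{p,q}(\D r),
\end{equation*}
valid for every $\phi\in C_c^\infty(\R)$, and then to exploit the positivity and monotonicity in $q$ of both sides. The starting point is the resolvent identity $\tfrac{1}{p+iur-\psi(u)}-\tfrac{1}{q+iur-\psi(u)}=\int_0^\infty(e^{-pt}-e^{-qt})\e[e^{iu(X_t-rt)}]\D t$, obtained from $(p+iur-\psi(u))^{-1}=\int_0^\infty e^{-pt}\e[e^{iu(X_t-rt)}]\D t$ and its analogue at $q$. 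Taking real parts and integrating against $\phi(r)(2\pi)^{-1}\D u\D r$, I would apply Fubini in the order $r,u,t$. The crucial control is that the inner $r$-integral $\int_\R\phi(r)\cos(u(X_t-rt))\D r=\Re[e^{iuX_t}\hat\phi(ut)]$ inherits Schwartz decay from $\hat\phi$: the bound $|\hat\phi(ut)|\le C_N(1+|ut|)^{-N}$ yields $O(1/t)$ after integrating in $u$, which is compensated by the Frullani-integrable factor $(e^{-pt}-e^{-qt})$. Fourier inversion $\int_\R \hat\phi(v)e^{ivx}\D v=2\pi\phi(x)$, applied with $v=ut$, $x=X_t/t$, then produces $\int_0^\infty(e^{-pt}-e^{-qt})\e[\phi(X_t/t)]t^{-1}\D t=\int\phi\D\mu_{p,q}$.

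Two consequences follow immediately. Taking $\phi\ge0$ shows $\s_p\ge\s_q$ a.e.\ on $\R$; and, since $e^{-pt}-e^{-qt}$ is increasing in $q$, the positive measures $\mu_{p,q}$ (and hence the a.e.\ values $\s_p-\s_q$) are increasing in $q$. For Part~(a), the hypothesis $\int_a^b\s_p<\infty$ makes $\s_p$ and $\s_q$ a.e.\ finite on $(a,b)$ (Remark~\ref{rem:Vigon}(ii)), so $\s_p-\s_q\ge0$ is well defined and locally integrable there; choosing $\phi_n\in C_c^\infty((a,b))$ with $\phi_n\uparrow\1_{(a,b)}$ and applying the monotone convergence theorem on both sides of the Plancherel identity yields $\int_a^b(\s_p-\s_q)\D r=\mu_{p,q}((a,b))$.

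For Part~(b), pick $\phi\in C_c^\infty((a,b))$ with $\phi\ge\1_{(a+\ve,b-\ve)}$. The two monotonicities above, together with the Plancherel identity and MCT as $q\to\infty$, give
\begin{equation*}
\int\phi(r)\sup_{q>p}(\s_p(r)-\s_q(r))\D r=\sup_{q>p}\int\phi\D\mu_{p,q}=\int\phi\D\mu_p\le\mu_p((a,b))\|\phi\|_\infty<\infty.
\end{equation*}
By Remark~\ref{rem:Vigon}(ii), at any $r$ with $\s_p(r)=\infty$ one has $\s_q(r)=\infty$ for every $q>0$, forcing $\sup_{q>p}(\s_p-\s_q)(r)=\infty$; finiteness of the integral above therefore forces $\s_p<\infty$ a.e.\ on $\{\phi>0\}$. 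At each such $r$ the cited result from~\cite[Thm~2.6]{MR889378} gives $\s_q(r)\to0$ as $q\to\infty$, so $\sup_{q>p}(\s_p-\s_q)(r)=\s_p(r)$, and therefore $\int\phi\s_p\D r<\infty$, yielding Part~(b). The main obstacle is the Fubini bookkeeping in the opening Plancherel step: the absolute integrand over $(r,u,t)$ is not summable, and the swap is legitimized only through the combination of Schwartz decay of $\hat\phi(ut)$ and the subtraction $e^{-pt}-e^{-qt}$ curing the $1/t$ singularity at $t=0$; everything downstream is standard monotone-convergence bookkeeping exploiting the positivity and monotonicity encoded in the Plancherel identity.
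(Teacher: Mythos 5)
Part (a): your approach is essentially sound, and it is a genuinely different route from the paper's. The paper goes through Lemma~\ref{lem:potential_density}, which represents $\s_q(r)$ as the limit of $\tfrac{1}{2\ve}\int_0^\infty\p(X_t-rt\in(-\ve,\ve))e^{-qt}\D t$ and then uses dominated convergence in $\ve$ and Fubini in $(r,t)$; you instead work directly at the level of characteristic functions and test against $\phi\in C_c^\infty$. The Fubini step does need to be made rigorous (the integrand over $(r,u,t)$ is not absolutely summable), but this can be done by truncating to $|u|\le M$ — where Fubini applies without reservation — and then passing $M\to\infty$: on the right you dominate by $(e^{-pt}-e^{-qt})\|\hat\phi\|_1/t$, and on the left you dominate $\bigl|\int_{-M}^M\Re\frac{q-p}{(p+iur-\psi(u))(q+iur-\psi(u))}\D u\bigr|$ by $2\pi(\s_p(r)+\s_q(r))\le C\s_p(r)$, which the hypothesis $\int_a^b\s_p<\infty$ makes integrable against $\phi$. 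So modulo that bookkeeping, Part (a) works.

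Part (b) has a genuine gap, and it is the gap that the paper's Gaussian-perturbation trick is designed to close. Your argument applies the Plancherel identity $\int\phi(\s_p-\s_q)\D r=\int\phi\D\mu_{p,q}$ with $\supp\phi\subset(a,b)$ and then takes $q\to\infty$ to deduce $\s_p<\infty$ a.e.\ on $(a,b)$. But the derivation of that identity requires $\s_p<\infty$ a.e.\ on $\supp\phi$: without it, the left-hand side $\s_p(r)-\s_q(r)$ is $\infty-\infty$ on a set of positive measure (Remark~\ref{rem:Vigon}(ii) gives $\s_q(r)=\infty$ whenever $\s_p(r)=\infty$), and there is no dominating function with which to move the $M\to\infty$ limit of $\int_{-M}^M$ inside $\int\phi(\cdot)\D r$. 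The step ``$\s_p(r)=\infty$ forces $\sup_{q>p}(\s_p-\s_q)(r)=\infty$'' is therefore not meaningful, and the argument is circular: the hypothesis $\mu_p((a,b))<\infty$ is never used to control $\s_p$ before the Plancherel identity is invoked. The paper avoids this precisely by first adding an independent Gaussian $\vs B$: the perturbed exponent $\psi(u)-\vs^2u^2/2$ makes $\s_q^\vs$ uniformly bounded, so the Part-(a) identity holds for $X+\vs B$ unconditionally on every bounded interval, and the contradiction is produced by comparing the $\vs\downarrow0$ behaviour of the two sides of~\eqref{eq:Vigon_eBM}, using $\mu_p((a,b))<\infty$ and a Gaussian tail/anticoncentration estimate to keep the probabilistic side bounded while the Fourier side blows up if $\int_{a+\ve}^{b-\ve}\s_p=\infty$. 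Without some mechanism of this kind (perturbation, mollification, or a quantitative substitute for it), the Plancherel identity alone cannot give Part (b).
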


Note that~\eqref{eq:Vigon2}, applied to every open subinterval of $(a,b)$, implies $\s_p\ge\s_q$ a.e. on the interval $(a,b)$ for any $q\in[p,\infty)$. We now show that Proposition~\ref{prop:Vigon} implies Theorem~\ref{thm:Vigon}.

\begin{proof}[Proof of Theorem~\ref{thm:Vigon}]
First assume $a,b\in\R$ and $\int_a^b\s_p(r)\D r<\infty$. Then $\s_p$ is finite a.e. on $(a,b)$. If $\s_p(r)<\infty$, then, by~\cite[Thm~2.6]{MR889378}, we have $\s_q(r)\to 0$ as $q\to\infty$ since $\s_q(r)$ is the reciprocal of the $q$-capacity $c^q_r$ of the set $\{0\}$ (see~\cite[Def.~43.1]{MR3185174} for definition). 
Since, by~\eqref{eq:Vigon2} we have
$\s_q\le\s_p$ a.e. on the interval $(a,b)$ for any $q\in[p,\infty)$,
the monotone convergence theorem (along a countable sub-sequence)
implies 
$\int_a^b (\s_p(r)-\s_q(r))\D r\ua\int_a^b\s_p(r)\D r$ as $q\to\infty$.
Again, by monotone convergence, we have $\mu_{p,q}((a,b))\ua \mu_{p}((a,b))$  as $q\to\infty$, implying the identity $\mu_p((a,b))=\int_a^b\s_p(r)\D r$ by~\eqref{eq:Vigon2}. 

Next we show that, for $a,b\in\R$, $\int_a^b\s_p(r)\D r=\infty$ implies $\mu_p((a,b))=\infty$. 
Suppose $\mu_p((a,b))<\infty$. Then 
$\int_{a+1/n}^{b-1/n}\s_p(r)\D r<\infty$ for all sufficiently large $n\in\N$ by Proposition~\ref{prop:Vigon}(b). Hence, \eqref{eq:Vigon} holds over every interval $(a+1/n,b-1/n)$. Since $\s_p\ge 0$, taking $n\to\infty$ and applying the monotone convergence theorem gives~\eqref{eq:Vigon} over the interval $(a,b)$, completing the proof for $a,b\in\R$.

Take any real sequences $a_n\da a$ and $b_n\ua b$ as $n\to\infty$. Since~\eqref{eq:Vigon} holds over the intervals $(a_n,b_n)$, the monotone convergence theorem implies~\eqref{eq:Vigon} over the possibly infinite interval $(a,b)$.
\end{proof}

The following result can be deduced from the results in~\cite[Sec.~42]{MR3185174} on the potential theory of L\'evy processes. Since Lemma~\ref{lem:potential_density} is key in the proof of Proposition~\ref{prop:Vigon}, we include an elementary short proof for completeness.

\begin{lem}
\label{lem:potential_density}
Suppose the L\'evy process $X$ is of infinite activity and $\s_p(r)<\infty$ for some $p>0$, $r\in\R$. Then, for any $\ve>0$ we have
\begin{equation}
\label{eq:U_p}
\frac{1}{2\pi}\int_\R\frac{\sin(u\ve)}{u\ve}
    \Re\frac{1}{p+iur-\psi(u)}\D u
=\frac{1}{2\ve}
    \int_0^\infty 
        \p(X_t-rt\in(-\ve,\ve))e^{-pt}\D t.
\end{equation}
In particular, the following limit holds
\begin{equation}
\label{eq:potential_density}
\s_p(r)
=\lim_{\ve\da 0}\frac{1}{2\ve}
    \int_0^\infty 
        \p(X_t-rt\in(-\ve,\ve))e^{-pt}\D t,
\qquad p>0.
\end{equation}
\end{lem}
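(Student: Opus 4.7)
The plan is to work with the finite diffuse $p$-potential measure
\[
U^p(\D x):=\int_0^\infty\p(X_t-rt\in\D x)\,e^{-pt}\D t,
\]
which has total mass $1/p$, is diffuse by Doeblin's lemma~\cite[Lem.~15.22]{MR1876169} (since each marginal $\p(X_t-rt\in\cdot)$ is diffuse), and, by Fubini applied to $\e[e^{iu(X_t-rt)}]=e^{t(\psi(u)-iur)}$, admits the Fourier transform
\[
\widehat{U^p}(u)=\int_0^\infty e^{-t(p+iur-\psi(u))}\D t=\frac{1}{p+iur-\psi(u)}.
\]
The left-hand side of~\eqref{eq:U_p} is $U^p((-\ve,\ve))/(2\ve)$, while on the right we recognise $\sin(u\ve)/(u\ve)$ as the Fourier transform of $(2\ve)^{-1}\1_{(-\ve,\ve)}$, so~\eqref{eq:U_p} is morally a Parseval identity pairing $(2\ve)^{-1}\1_{(-\ve,\ve)}$ with $U^p$.

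The main obstacle is that $u\mapsto \sin(u\ve)/u\notin L^1(\R)$, so Fourier inversion is not directly applicable. I circumvent this via Dirichlet truncation. For any $y\notin\{-\ve,\ve\}$, the partial integrals $D_R(y):=\pi^{-1}\int_{-R}^R u^{-1}\sin(u\ve)\cos(uy)\D u$ converge to $\1_{(-\ve,\ve)}(y)$ as $R\to\infty$ and are bounded uniformly in $R$ and $y$ by a universal constant $K$ (the supremum of $|\int_0^R t^{-1}\sin t\D t|$ over $R>0$). Since $U^p$ assigns no mass to $\{-\ve,\ve\}$, bounded convergence yields $\int D_R\,\D U^p\to U^p((-\ve,\ve))$. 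For each fixed $R<\infty$, the integrand $(u,y)\mapsto u^{-1}\sin(u\ve)\cos(uy)$ is bounded on the finite-mass product space $[-R,R]\times\R$ with respect to $\Leb\otimes U^p$, so Fubini gives
\[
\int D_R(y)\,U^p(\D y)
=\frac{1}{\pi}\int_{-R}^R\frac{\sin(u\ve)}{u}\int_{\R}\cos(uy)\,U^p(\D y)\D u
=\frac{1}{\pi}\int_{-R}^R\frac{\sin(u\ve)}{u}\Re\widehat{U^p}(u)\D u,
\]
using that $\int\cos(uy)\,U^p(\D y)=\Re\widehat{U^p}(u)$. Now let $R\to\infty$: the bound $|\sin(u\ve)/u|\le\ve$ and the standing hypothesis $\Re\widehat{U^p}\in L^1(\R)$ (which is exactly $\s_p(r)<\infty$) provide an integrable dominating function $\ve\cdot\Re\widehat{U^p}$, so dominated convergence extends the Fourier integral to all of $\R$. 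Dividing by $2\ve$ yields~\eqref{eq:U_p}.

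Finally,~\eqref{eq:potential_density} follows from~\eqref{eq:U_p} by letting $\ve\da 0$: the integrand $(\sin(u\ve)/(u\ve))\Re\widehat{U^p}(u)$ converges pointwise to $\Re\widehat{U^p}(u)$ and is dominated in absolute value by the integrable function $\Re\widehat{U^p}$, so dominated convergence gives the limit as $\s_p(r)$, which identifies the right-hand side of~\eqref{eq:potential_density} and concludes the proof.
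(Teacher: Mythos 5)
Your proof is correct and shares the same skeleton as the paper's: form the $p$-potential measure of $(X_t-rt)_{t\ge0}$, note it is diffuse and compute its Fourier transform via Fubini, use a Dirichlet-kernel inversion to express $U^p((-\ve,\ve))$ as a principal-value integral against $\Re\widehat{U^p}$, upgrade the truncated limit to a genuine Lebesgue integral using the hypothesis $\s_p(r)<\infty$, and finally send $\ve\da 0$ by dominated convergence. You depart from the paper in two places. First, where the paper simply cites the L\'evy inversion formula from a textbook, you re-derive the truncation identity from scratch: you decompose the Dirichlet integral, verify the pointwise limit and uniform bound for $D_R$, and pass from $\int D_R\,\D U^p$ to the truncated Fourier integral by Fubini on the finite-mass space $[-R,R]\times\R$. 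Second, to justify passing to the Lebesgue integral over $\R$, you use the direct domination $|\sin(u\ve)/u|\le\ve$ together with the standing assumption that $\Re\widehat{U^p}\in L^1$, whereas the paper detours through a bound showing $\Re\widehat{U^p}\in L^2$ and invokes Cauchy--Schwarz against $\sin(u\ve)/u\in L^2$. Your route is more self-contained (no appeal to an external inversion theorem) and the integrability step is cleaner; the paper's is shorter on the page at the cost of the citation and of a slightly more roundabout integrability argument.
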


Note from~\eqref{eq:potential_density} that $p\mapsto\s_p(r)$ is a non-increasing function for each $r\in\R$.

\begin{proof}[Proof of Lemma~\ref{lem:potential_density}]
Since $(X_t-rt)_{t\geq0}$ is of infinite activity, we may assume without loss of generality that $r=0$. Define the measure $U_p(\D x):=\int_0^\infty\p(X_t\in \D x)e^{-pt}\D t$ on $\R$.
Note that 
$U_p(\D x)$ is diffuse (by Doeblin's lemma~\cite[Lem.~15.22]{MR1876169}) 
and, by Fubini's theorem, the Fourier transform of $U_p$ equals
$\int_\R e^{iux}U_p(\D x)
=\int_0^\infty e^{-(p-\psi(u))t}\D t
=1/(p-\psi(u))$. Fourier inversion formula~\cite[Thm~26.2]{MR1324786} and Fubini's theorem yield
\[
U_p((-\ve,\ve))
=\lim_{c\to\infty}\frac{1}{2\pi}
    \int_{-c}^c\Re\bigg[\frac{e^{iu\ve}-e^{-iu\ve}}{iu}
    \frac{1}{p-\psi(u)}\bigg]\D u
=\lim_{c\to\infty}\frac{1}{\pi}
    \int_{-c}^c\frac{\sin(u\ve)}{u}
    \Re\frac{1}{p-\psi(u)}\D u.
\]
Since $0<\Re(1/(p-\psi(u)))^2\leq \Re(1/(p-\psi(u)))/p$ for all $u$ and $\s_p(r)<\infty$, the function is $u\mapsto \Re(1/(p-\psi(u)))$ square-integrable. Since $u\mapsto\sin(u\ve)/u$ is also square-integrable, their product is integrable by Cauchy--Schwarz. Thus, $U_p((-\ve,\ve))
=\pi^{-1}\int_\R(\sin(u\ve)/u)\Re(1/(p-\psi(u)))\D u$ for any $\ve>0$, implying~\eqref{eq:U_p}. Since $|\sin(x)/x|\le 1$ and $\Re(1/(p-\psi(u)))\ge 0$ is integrable, taking $\ve\da 0$ in~\eqref{eq:U_p} and applying the dominated convergence theorem gives~\eqref{eq:potential_density}.
\end{proof}

\begin{proof}[Proof of Proposition~\ref{prop:Vigon}]
(a). Since $\s_p$ is integrable on $(a,b)$, it is finite a.e. on $(a,b)$. By Remark~\ref{rem:Vigon}(ii), for each $r\in(a,b)$ with $\s_p(r)<\infty$  we have $\s_q(r)<\infty$ fir all $q>0$. Hence, by Lemma~\ref{lem:potential_density}, 
$\s_{q}(r)
\ge \int_0^\infty 
        \p(X_t-rt\in(-\ve,\ve))e^{-qt}\D t\to \s_q(r)$ as $\ve\da 0$.
Thus, the dominated convergence theorem and Fubini's theorem give 
\begin{align*}
\int_a^b\s_{q}(r)\D r
&=\int_a^b\lim_{\ve\da 0}\frac{1}{2\ve}
    \int_0^\infty 
        \p(X_t-rt\in(-\ve,\ve))e^{-qt}\D t\D r\\
&=\lim_{\ve\da 0}\int_0^\infty\frac{1}{2\ve}\e\bigg[\int_a^b 
        \1_{((X_t-\ve)/t,
            (X_t+\ve)/t)}(r)\D r\bigg]e^{-qt}\D t.
\end{align*}
The random variable $\tfrac{1}{2}(t/\ve)\int_a^b \1_{((X_t-\ve)/t,(X_t+\ve)/t)}(r)\D r$ is bounded by $1$ and converges to $\1_{(a,b)}(X_t/t)+\tfrac{1}{2}\1_{\{a,b\}}(X_t/t)$ as $\ve\da 0$, which equals $\1_{(a,b)}(X_t/t)$ a.s. since $X$ has infinite activity. Since the function $t\mapsto (e^{-pt}-e^{-qt})/t$ is integrable on $(0,\infty)$ for any $q>p$, the dominated convergence theorem implies  
\begin{multline*}
\int_a^b(\s_p(r)-\s_{q}(r))\D r
=\lim_{\ve\da 0}\int_0^\infty\e\bigg[\frac{t}{2\ve}\int_a^b 
        \1_{(X_t-\ve,
            X_t+\ve)}(rt)\D r\bigg](e^{-pt}-e^{-qt})\frac{\D t}{t}\\
=\int_0^\infty\e\big[\1_{(a,b)}(X_t/t)
        \big](e^{-pt}-e^{-qt})\frac{\D t}{t}
=\int_0^\infty\p(X_t/t\in(a,b))(e^{-pt}-e^{-qt})
    \frac{\D t}{t},
\end{multline*}
establishing~\eqref{eq:Vigon2}.

(b). Assume $\mu_p((a,b))<\infty$ and that there exists some $\ve>0$ with $\int_{a+\ve}^{b-\ve}\s_p(r)\D r=\infty$. We now show that these assumptions lead to a contradiction. Let Brownian motion $B$ be independent of $X$. The characteristic exponent of $X+\vs B$ equals $u\mapsto\psi(u)-\vs^2u^2/2$ for any $\vs>0$. For any $q>0$ let
\[
\s_q^{\vs}(r)\coloneqq\frac{1}{2\pi}\int_\R\Re\frac{1}{q+iur-\psi(u)+\vs^2u^2/2}\D u
\quad\text{and note}\quad
0<\int_a^b\s_q^{\vs}(r)\D r\leq \frac{b-a}{2\pi}\int_{\R}\frac{\D u}{q+\vs^2u^2/2}<\infty.
\]
 Thus~\eqref{eq:Vigon2} holds for the process $X+\vs B$, the interval $(a+\ve,b-\ve)$ and any $q>p$. Since, by the monotone convergence theorem, the upper bound in the last display tends to zero as $q\to\infty$, the monotone convergence theorem applied to the right-hand side of~\eqref{eq:Vigon2} yields
\begin{equation}
\label{eq:Vigon_eBM}
\int_{a+\ve}^{b-\ve}\frac{1}{2\pi}\int_\R
    \Re\frac{1}{p+iur-\psi(u)+\vs^2u^2/2}\D u\D r
=\int_0^\infty\p((X_t+\vs B_t)/t\in (a+\ve,b-\ve))
    e^{-pt}\frac{\D t}{t}.
\end{equation}

Since we assumed $\int_{a+\ve}^{b-\ve}\s_p(r)\D r=\infty$, then for every $M>0$ there exist some $K>0$ such that 
$(2\pi)^{-1}\int_{a+\ve}^{b-\ve}
\int_{-K}^K\Re(1/(p+iur-\psi(u)))\D u\D r\ge 2M$. The bound $\Re(1/(p+iur-\psi(u)+\vs^2u^2/2))\le 1/p$ and the dominated convergence theorem (applied as $\vs\da 0$) give 
\[
\frac{1}{2\pi}\int_{a+\ve}^{b-\ve}
\int_{-K}^K\Re\frac{1}{p+iur-\psi(u)+\vs^2u^2/2}\D u\D r\ge M,
\]
for all sufficiently small $\vs>0$. Since $M>0$ is arbitrary, this implies that the integral on the right side of~\eqref{eq:Vigon_eBM} diverges as $\vs\downarrow0$. 

To complete the proof, we show that the assumption $\mu_p((a,b))<\infty$ implies that the integral on the right side of~\eqref{eq:Vigon_eBM} is bounded as $\vs\downarrow0$. We will first bound the integral on $[\vs^2,\infty)$. Note that 
\[
\p((X_t+\vs B_t)/t\in (a+\ve,b-\ve))
\le\p(|\vs B_t/t|\ge\ve)
    +\p(X_t/t\in (a,b)).
\]
By assumption, the integral $\int_{\vs^2}^\infty\p(X_t/t\in (a,b))e^{-pt}t^{-1}\D t$ is finite and converges to $\mu_p((a,b))<\infty$ as $\vs\da0$. The elementary bound $\p(|B_1|\ge x)\le e^{-x^2/2}/(\sqrt{2\pi}x)$ implies that, 
\begin{align*}
\int_{\vs^2}^\infty \p(|\vs B_t/t|\ge\ve)e^{-pt}\frac{\D t}{t}
=\int_{1}^\infty \p(|B_1|\ge\ve\sqrt{t})e^{-p\vs^2t}\frac{\D t}{t}
\le\int_{1}^\infty \frac{e^{-\ve^2t/2}}{\ve\sqrt{2\pi}}\frac{\D t}{t^{3/2}}<\infty.
\end{align*}

It remains to bound the integral on the right side of~\eqref{eq:Vigon_eBM} over the interval $(0,\vs^2)$ as $\vs\da 0$. To do this, note that, for any $c<d$, 
\[
\sup_{x\in\R}\p(\vs B_t/t+x\in(c,d))
=\p\big(|B_1|\le (d-c)\sqrt{t}/(2\vs)\big)
\le(d-c)\sqrt{t}/\big(\vs\sqrt{2\pi}\big).
\] 
Thus, elementary inequalities yield
\begin{align*}
\int_0^{\vs^2}\p((X_t+\vs B_t)/t\in (a+\ve,b-\ve))\frac{\D t}{t}
&=\int_0^{\vs^2}\int_\R\p(\vs B_t/t+x\in (a+\ve,b-\ve))\p(X_t/t\in\D x)\frac{\D t}{t}\\
&\le\int_0^{\vs^2}\frac{(b-a-2\ve)\sqrt{t}}{\vs\sqrt{2\pi}}\frac{\D t}{t}
\le\int_0^{1}\frac{(b-a-2\ve)}{\sqrt{2\pi}}\frac{\D t}{\sqrt{t}}<\infty.
\end{align*}
Hence, the right side of~\eqref{eq:Vigon_eBM} is bounded as $\vs\downarrow0$, completing the proof.
\end{proof}

\subsection{Characterisation of infinite variation} The following lemma is proved in \cite[Prop.~1.5.3]{vigon:tel-00567466}. The basic idea for its proof is already present in~\cite{MR0368175}, see the first display on page 34 of~\cite{MR0368175}. As this lemma is important for the examples in this paper, we give a proof below. 

\begin{lem}[{\cite[Prop.~1.5.3]{vigon:tel-00567466}}]
\label{lem:VigonThesis}
Let $\psi$ be the characteristic exponent of a L\'evy process $X$. Then the following equivalence holds:  $\int_1^\infty u^{-2}|\Re\psi(u)|\D u=\infty$ if and only if $X$ has paths of infinite variation.
\end{lem}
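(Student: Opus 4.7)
The plan is to reduce the integrability of $u^{-2}|\Re\psi(u)|$ at infinity to the classical finite-variation criterion via a direct Tonelli calculation. From the L\'evy--Khintchine formula, $\Re\psi(u)=-\sigma^2u^2/2-\int_\R(1-\cos(ux))\nu(\D x)\le0$, so $|\Re\psi(u)|=\sigma^2u^2/2+\int_\R(1-\cos(ux))\nu(\D x)$. Since both summands are nonnegative, Tonelli's theorem yields
\[
\int_1^\infty u^{-2}|\Re\psi(u)|\D u
=\int_1^\infty\frac{\sigma^2}{2}\D u
+\int_\R f(x)\,\nu(\D x),
\qquad
f(x)\coloneqq\int_1^\infty\frac{1-\cos(ux)}{u^2}\D u.
\]
If $\sigma^2>0$, both sides of the claimed equivalence hold automatically ($X$ has infinite variation and the first summand above diverges), so it remains to treat the case $\sigma=0$. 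The goal then becomes showing $\int_\R f(x)\nu(\D x)=\infty$ if and only if $\int_{(-1,1)}|x|\nu(\D x)=\infty$, which is the standard criterion for infinite variation when $\sigma=0$.

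The second step is an asymptotic analysis of $f$. Note $f$ is even, and for $x>0$ the substitution $v=ux$ gives $f(x)=x\int_x^\infty v^{-2}(1-\cos v)\D v$. Using the classical identity $\int_0^\infty v^{-2}(1-\cos v)\D v=\pi/2$ (an easy integration by parts reducing it to $\int_0^\infty v^{-1}\sin v\,\D v=\pi/2$) gives $f(x)\sim(\pi/2)x$ as $x\da 0$. For large $x$, a second integration by parts on $\int_x^\infty v^{-2}\cos v\,\D v$ yields $\int_x^\infty v^{-2}(1-\cos v)\D v=1/x+O(1/x^2)$, whence $f(x)\to 1$ as $x\to\infty$. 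Combined with the continuity and strict positivity of $f$ on $(0,\infty)$, this gives the two-sided bound $f(x)\asymp\min\{|x|,1\}$ on $\R$.

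With this bound in hand, $\int_\R f(x)\nu(\D x)<\infty$ is equivalent to $\int_\R\min\{|x|,1\}\nu(\D x)<\infty$, and since any L\'evy measure satisfies $\nu(\R\setminus(-1,1))<\infty$, this is in turn equivalent to $\int_{(-1,1)}|x|\nu(\D x)<\infty$. This completes the plan. The only step requiring any care is the asymptotic analysis of $f$ near $0$ and $\infty$, but the explicit value of the sine integral $\pi/2$ makes the computation routine; everything else is bookkeeping with Tonelli and the L\'evy--Khintchine formula.
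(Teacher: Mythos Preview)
Your proof is correct. The approach is closely related to the paper's but more direct. Both proofs start by disposing of the Gaussian part and then apply Tonelli to swap the $\D u$ and $\nu(\D x)$ integrals. The difference lies in how the resulting inner integral is handled. The paper first reduces to $\nu$ supported on $(-1,1)$, performs two further Fubini steps to rewrite $u^{-2}|\Re\psi(u)|$ as the cosine transform of an auxiliary function $\wt\nu$, and then invokes Fourier's single-integral formula to obtain the \emph{exact} identity $\int_0^\infty u^{-2}|\Re\psi(u)|\,\D u=(\pi/2)\int_{(-1,1)}|x|\,\nu(\D x)$. You instead analyze $f(x)=\int_1^\infty u^{-2}(1-\cos(ux))\,\D u$ directly via the substitution $v=ux$ and the classical value $\int_0^\infty v^{-2}(1-\cos v)\,\D v=\pi/2$, obtaining only the two-sided estimate $f(x)\asymp\min\{|x|,1\}$. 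Your route is shorter, avoids the preliminary truncation of $\nu$ (the large-$x$ behaviour $f(x)\to1$ handles big jumps automatically), and uses nothing beyond the sine integral; the paper's route yields a sharper equality at the cost of a less elementary Fourier-analytic tool.
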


\begin{proof}
If the Gaussian component $\sigma^2>0$, the integral in the lemma is infinite and $X$ is of infinite variation. We thus assume $\sigma^2=0$.
Since the compound Poisson process composed of the jumps of $X$ of magnitude at least $1$ has a bounded characteristic function, we may assume that the L\'evy measure $\nu$ of $X$ is supported in the interval $(-1,1)$. Recall that $\ov\nu(x)=\nu((-1,1)\setminus(-x,x))$. Define $\wt\nu(x)\coloneqq\int_x^1\ov\nu(y)\D y$ for $x\in[0,1)$ and $0$ otherwise. By Fubini's theorem we get $\wt\nu(0)=\int_{(-1,1)}|x|\nu(\D x)$. Moreover, for any twice differentiable function $f:[0,1)\to[0,\infty)$ satisfying $f(0)=f'(0+)=0$, Fubini's theorem implies
\begin{align*}
\int_{(-1,1)}f(|x|)\nu(\D x)
=\int_{(-1,1)}\int_{0}^{|x|} f'(y)\D y\nu(\D x)
=\int_0^1 f'(y)\ov\nu(y)\D y
=\int_0^1f''(z)\wt\nu(z)\D z.
\end{align*}
The choice $f(x)=x^2$ yields $2\int_0^1\wt\nu(x)\D x=\int_{(-1,1)}x^2\nu(\D x)$, implying that $\wt\nu$ is integrable. Similarly, the choice $f(x)=(1-\cos(ux))/u^2$ gives $u^{-2}|\Re\psi(u)|=\int_{(-1,1)}u^{-2}(1-\cos(ux))\nu(\D x)=\int_0^1\wt\nu(z)\cos(uz)\D z$.
Fix $\lambda\in(0,\infty)$, integrate the last identity on $(0,\lambda)$ and apply Fubini's theorem again to obtain
\[
\int_0^\lambda\frac{|\Re\psi(u)|}{u^2}\D u
=\int_0^1\frac{\sin(\lambda x)}{x}\wt\nu(x)\D x.
\]
Note that the integrand is integrable since $|\sin(\lambda x)/x|\leq \lambda$ for all $x\in\R$ and  $\wt\nu$ is integrable. Recall that $\wt\nu(x)=0$ for $x\in\R\setminus[0,1)$. Hence, Fourier's single-integral formula~\cite[Thm~12, p.~25]{MR942661} shows that 
$\int_{(0,\infty)}u^{-2}|\Re\psi(u)|\D u
=\lim_{\lambda\to\infty}\int_\R x^{-1}\sin(\lambda x)\wt\nu(x)\D x=(\pi/2)\wt\nu(0)=(\pi/2)\int_{(-1,1)}|x|\nu(\D x)$. This quantity is infinite if and only if $X$ is of infinite variation, completing the proof.
\end{proof}

\section*{Acknowledgements}
JGC and AM are supported by EPSRC grant EP/V009478/1 and The Alan Turing Institute under the EPSRC grant EP/N510129/1; 
AM was supported by the Turing Fellowship funded by the Programme on Data-Centric Engineering of Lloyd's Register Foundation; DB is funded by the CDT in Mathematics and Statistics at The University of Warwick.

\printbibliography

\end{document}